\providecommand{\Real}{\mathop{\rm Re}\nolimits}%
\providecommand{\Imag}{\mathop{\rm Im}\nolimits}%
\begin{document}

\theoremstyle{plain}

\newtheorem{theorem}{Theorem}
\newtheorem{lemma}{Lemma}
\newtheorem{cor}{Corollary}
\newtheorem{prop}{Proposition}
\newtheorem{req}{Remark}
\newtheorem{definition}{Definition}
\newtheorem{hyp}{Hypothesis}

\setlength{\marginparwidth}{1in}

\newenvironment{Romain}
    {\begin{center}
     \begin{tabular}{|p{0.95\textwidth}|}\hline\color{ForestGreen}}
    { \vspace{5pt}\color{black}\\\hline\end{tabular} 
    \end{center}}

\title{\bf Influence of the geometry on a field-road model : the case of a conical field}
\makeatletter{\renewcommand*{\@makefnmark}{}\footnotetext{email : romain.ducasse@dauphine.fr}\makeatother}

\author[*]{Romain {\sc Ducasse}}

\affil[*]{Ecole des Hautes Etudes en Sciences Sociales, PSL Research University,  
		Centre d'Analyse et Math\'ematiques Sociales, 54 boulevard Raspail 
		75006 Paris, France}
		
\date{}

\maketitle

\noindent {\textbf{Keywords:} KPP equations, reaction-diffusion system, fast diffusion on a line, asymptotic speed of propagation.} \\
\\
\noindent {\textbf{MSC:} 35K57, 92D25, 35B40, 35K40, 35B53.}

\begin{abstract}
Field-road models are reaction-diffusion systems which have been recently introduced to account for the effect of a road on propagation phenomena arising in epidemiology and ecology. Such systems consist in coupling a classical Fisher-KPP equation to a line with fast diffusion accounting for a road. A series of works investigate the spreading properties of such systems when the road is a straight line and the field a half-plane. Here, we take interest in the case where the field is a cone. Our main result is that the spreading speed is not influenced by the angle of the cone.
\end{abstract}

\section{Introduction, known results and presentation of the model}

\subsection{Field-road models}
The study of reaction-diffusion equations and systems is motivated by a wide range of applications, in particular in ecology and epidemiology. Such equations can indeed model propagation phenomena arising from the combined effects of diffusion, which accounts in a biological context for random motion of individuals, and reaction (resulting from reproduction and mortality). The most iconic example is the Fisher-KPP equation (see \cite{F} for the biological motivations and \cite{KPP} for a mathematical study):
\[
\partial_{t}u -d\Delta u = f(u) \, , \, t>0 \, ,  x \in \mathbb{R}^{N}.
\]
The function $f$ is supposed to satisfy the KPP hypothesis, i.e., $f$ is a locally Lipschitz continuous function on $\left[ 0, 1 \right]$, $f>0$ on $]0,1[$, $f(0)=f(1)=0$ and $v \mapsto \frac{f(v)}{v}$ is non-increasing (hence, $f$ is differentiable at $0$ and $f^{\prime}(0)>0$). This is called the {\em KPP property}, and this setting will be assumed through all the paper. A typical example is the logistic non-linearity $f(v)=v(1-v)$. In the KPP setting, it is proven in \cite{KPP} (see also \cite{AW} for further properties) that, if $u_{0}$ is a non-negative compactly supported initial datum not everywhere equal to zero, then the solution $u(t,x)$ of the Fisher-KPP equation arising from this datum goes to $1$ as $t$ goes to infinity, locally uniformly in $x$. This property is called  {\em invasion}. One can then define the speed of invasion as a quantity $c_{KPP} \geq 0$ such that:
\[
\begin{array}{l}
\forall \, c > c_{KPP} \, , \, \displaystyle{\sup_{\vert x \vert \geq ct}} u(t,x) \to 0  \, \text{ as} \,\, t \to +\infty\\
\forall \, c < c_{KPP} \, , \, \displaystyle{\inf_{\vert x \vert \leq ct  }} u(t,x) \to 1 \, \text{ as} \,\, t \to +\infty.
\end{array}
\]
Moreover, the speed of invasion can be explicitly computed in this case : $c_{KPP}=2\sqrt{d f^{\prime}(0)}$, see \cite{AW}.

In \cite{BRR1}, Berestycki, Roquejoffre and Rossi proposed a new system specifically devised for modeling the role of roads in biological invasions (informally, a road is a region where the density $u$ diffuses very fast but where no reproduction takes place). Their model is a system of two coupled reaction-diffusion equations. They consider a population diffusing in a field (i.e., a half-plane) with a diffusivity constant $d$ and reacting, or reproducing, with a KPP reaction term $f$. We call $v(t,x,y)$ the density of population at time $t$ at point $(x,y)$. On the boundary of the field, there is a road, i.e., a line where the population diffuses at a different rate (i.e., with a diffusivity constant $D$ different from $d$) but where there is no reaction term. The density of the population on the road is denoted $u(t,x)$. Moreover, we assume that the population on the field can leave the field for the road with some probability, and that the population on the road can enter the field with some (a priori different) probability. The system will then read:
\begin{equation*}
\left\{
\begin{array}{llcl}
\partial_{t}u(t,x)-D\partial_{xx}^{2}u(t,x) &= \nu v(t,x,0)-\mu u(t,x) &\text{ for }& \quad t >0 \, , \, x\in \mathbb{R} \\
\partial_{t}v(t,x,y)-d\Delta v(t,x,y) &= f(v) &\text{ for }& \quad t >0 \, , \, (x,y)\in \mathbb{R}\times \mathbb{R}^{+} \\
-d\partial_{y}v(t,x,0) &= \mu u(t,x)-\nu v(t,x,0) &\text{ for }& \quad t >0 \, , \, x\in \mathbb{R},
\end{array}
\right.
\end{equation*}
where $\mu$ and $\nu$ are two positive constants representing the exchanges between the road and the field. It is proven in \cite{BRR1} that, for a given compactly supported non-negative initial datum $(u_{0},v_{0})$ not everywhere equal to zero, spreading in the direction of the road occurs at some speed $c_{BRR}>0$, depending on the parameters $d,D,\mu,\nu$. In particular, when $D>2d$, then $ c_{BRR}>c_{KPP}:=2\sqrt{df^{\prime}(0)}$. This means that the road enhances the speed of invasion in the field. The influence of drift terms and mortality on the road is studied in \cite{BRR2}, and also the spreading in all directions in the field is studied in the paper \cite{BRR3}.

 The model introduced in \cite{BRR1} has been extended in several directions. For example, the case where the exchanges coefficients $\mu, \nu $ are not constant but periodic in $x$ is treated in \cite{GMZ}, non-local exchanges are studied in \cite{P2,P1}, a combustion nonlinearity instead of a KPP one is considered in \cite{D1}. The effect of non-local diffusion is studied in \cite{BCRR_sem,BCRR}. The case where the field is a cylinder with its boundary playing the role of the road is treated in \cite{RTV}.

\subsection{Models with general fields and main results}

In this paper, we study the case where the field is not a half-plane anymore. We focus on the case where the field is asymptotically a cone (in a sense to be made precise later in Hypothesis \ref{acf}), however some of our results hold in a more general case.

The more natural way to define a field-road model with a field that is not a half-plane would be as follow : let $\Omega$ be an open connected set. $\Omega$ will be the field, and the road will be its boundary $\partial \Omega$  (we detail the hypotheses we make on $\Omega$ after). 

In the following, the function $v$ represents the density of population in the field, and $u$ the density on the road. On the field, $v$ diffuses and reacts according to a KPP nonlinearity, as in \cite{BRR1,BRR2,BRR3,GMZ,RTV}. On the road, $u$ is only subject to diffusion. The road being a one-dimensional manifold, this results in the presence of a Laplace-Beltrami operator. Exchanges between $u$ and $v$ occur on $\partial \Omega$ : a proportion $\nu$ of the population on the field enters the road and a proportion $\mu$ of the population on the road enters the field. The system then reads, for a given field $\Omega$ satisfying some hypotheses to be detailed after:
\begin{equation}
\left\{
\begin{array}{llcl}\label{syst}
\partial_{t}u-D\partial_{ss}^{2}u &= \nu v-\mu u &\text{ for }& \quad t >0 \, , \, (x,y)\in \partial \Omega \\
\partial_{t}v-d\Delta v &= f(v) &\text{ for }& \quad t >0 \, , \, (x,y)\in  \Omega \\
d\partial_{n}v &= \mu u-\nu v &\text{ for }& \quad t >0 \, , \, (x,y)\in \partial \Omega,
\end{array}
\right.
\end{equation}
where $\partial^{2}_{ss}$ stands here for Laplace-Beltrami operator on $\partial \Omega$, and $n$ is the outer normal to the boundary.

The main goal of this paper is to study the speed of spreading in the direction of the road when the field is \emph{asymptotically conical}:
\begin{hyp}\label{acf}{Asymptotically conical field.}
Let $\Omega$ be an open connected set. $\Omega$ is an asymptotically conical field if there is a function $\rho$ and a real $a\in \mathbb{R}$ such that $\rho \in C^{2}(\mathbb{R})$, $\rho^{\prime} \in C^{1,\alpha}_{Unif}$, for some $\alpha>0$\footnote{ By $f \in C^{1,\alpha}_{Unif}$, we mean that $f$ is bounded in the $C^{1,\alpha}(\mathbb{R})$ norm $\| f \|_{C^{1,\alpha}}=\|f \|_{L^{\infty}}+\| f^{\prime} \|_{L^{\infty}}+\big[ f^{\prime} \big]_{C^{0,\alpha}}$.} and:

\begin{equation}
\left\{
\begin{array}{lrr}
\vert \rho(x)-a \vert x \vert \vert &\underset{x \to \pm \infty}{\longrightarrow}& 0 \\
\rho^{\prime}(x) &\underset{x \to \pm \infty}{\longrightarrow}& \pm a \\
\rho^{\prime \prime}(x) &\underset{x \to \pm \infty}{\longrightarrow}& 0,
\end{array}
\right.
\end{equation}
and
\[
\Omega = \{ (x,y) \, , \, y \geq \rho(x) , \, x \in \mathbb{R}\}.
\]
We call $\theta_{0}$ the half-angle of opening of the cone, i.e., $2\theta_{0} \in ]0,2\pi]$, where $\theta_{0}=\arctan(\frac{1}{a})+\pi 1_{a<0} \geq 0$ if $a\neq 0$ (if $a=0$, we set $\theta_{0}=\frac{\pi}{2}$). The right (resp. left) direction of the road will be the direction of the vector $(1,a)$ (resp. $ (-1,a)$ ).

\end{hyp}
This for instance includes roads given by graphs of functions as $\rho(x)=\sqrt{1+x^{2}}$. Let us remark that we have the following formula, in local coordinates, for the Laplace-Beltrami operator, if $u\in C^{2}(\partial \Omega)$:
\[
\partial^{2}_{ss}u(x,\rho(x))=\frac{1}{\sqrt{1+(\rho^{\prime})^{2}(x)}}\partial_{x}\Big(\frac{1}{\sqrt{1+(\rho^{\prime})^{2}(x)}}\partial_{x}\big(u(x,\rho(x))\big)\Big).
\]

Our main result is that the speed of invasion in the direction of a branch of the road is not influenced by the other branch of the road, whatever the angle between the roads. There is no acceleration if the angle is small, and no deceleration if the angle is large. Let us state here our main theorem:
 \begin{theorem}\label{mainth}
Consider the field-road system \eqref{syst} with $\Omega$ being an asymptotically conical field, i.e., satisfying Hypothesis \ref{acf}. Let $(u,v)$ be the solution of this system arising from a non-negative and compactly supported initial datum not everywhere equal to zero. Then, the spreading speed in the direction of the road is $c_{BRR}$, in the following sense:

\[
\forall h>0 , \, \forall  c<c_{BRR},
\inf_{\substack{(x,y) \in \Omega \\ dist((x,y),\partial \Omega)<h \\ \vert(x,y)\vert \leq ct}} v(t,x,y) \underset{t\to +\infty}{\longrightarrow}  1 , \inf_{\substack{(x,y) \in \partial\Omega \\ \vert(x,y)\vert \leq ct}} u(t,x,y) \underset{t\to +\infty}{\longrightarrow}  1
\]
and
\[
\forall c>c_{BRR},
\sup_{\substack{(x,y) \in \Omega \\  \vert(x,y)\vert \geq ct}} v(t,x,y) \underset{t\to +\infty}{\longrightarrow}   0 , \sup_{\substack{(x,y) \in \partial\Omega \\ \vert(x,y)\vert \geq ct}} u(t,x,y) \underset{t\to +\infty}{\longrightarrow}  0.
\]

\end{theorem}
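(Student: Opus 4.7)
The plan is to prove the upper and lower spreading inequalities separately, exploiting the fact that far from the vertex the geometry is an arbitrarily small perturbation of the half-plane field-road configuration of \cite{BRR1}. I would parametrise a tubular neighbourhood of $\partial\Omega$ inside $\Omega$ by coordinates $(s,z)$, where $s\in\mathbb{R}$ is the signed arc length on $\partial\Omega$ measured from $(0,\rho(0))$ and $z\geq 0$ is the Euclidean distance to $\partial\Omega$. Hypothesis \ref{acf} then ensures that, in these coordinates, both the Laplace--Beltrami operator on the road and the Laplacian in the field coincide asymptotically with $\partial_{ss}^{2}$ and $\partial_{ss}^{2}+\partial_{zz}^{2}$ respectively, the error terms being controlled by $\rho''$ and by $\rho'\mp a$ at $\pm\infty$, which both vanish outside larger and larger compact sets of $x$'s. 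Moreover $|(x,y)|=|s|(1+o(1))$ for $(x,y)\in\partial\Omega$ as $|s|\to\infty$, so controlling spreading in arc length along the road and in Euclidean norm is equivalent at the level of speeds.

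For the upper bound at $c>c_{BRR}$, the KPP inequality $f(v)\leq f'(0)v$ reduces the matter to finding an exponential supersolution of the linearised problem. Modelled on the half-plane construction of \cite{BRR1}, I would take
\[
\bar U(t,s)=M\cosh(\lambda s)\,e^{\lambda c_1 t},\qquad \bar V(t,s,z)=M\gamma\cosh(\lambda s)\,e^{\lambda c_1 t-\beta z},
\]
for some $c_1\in(c_{BRR},c)$ and $(\lambda,\beta,\gamma)$ determined by the characteristic system of the flat field-road problem at speed $c_1$; the symmetric factor $\cosh(\lambda s)$ dominates both branches simultaneously, and is essentially $e^{-\lambda(|s|-c_1 t)}$ up to a multiplicative constant. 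Outside a compact set the supersolution inequalities differ from the flat ones only by the vanishing perturbation above and thus hold; on the remaining compact set one enforces them by taking $M$ sufficiently large. Comparison with a compactly supported datum then yields the required decay on $\{|(x,y)|\geq ct\}$.

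For the lower bound I would proceed in three steps. First, the strong maximum principle together with the classical KPP ODE trap $\dot w=f(w)$ show that, after some finite time $t_0$, the solution $(u,v)$ is bounded below by a constant $\eta>0$ on a large ball around the origin. Second, given $c'<c_{BRR}$, I import from \cite{BRR1} a compactly supported subsolution of the flat half-plane field-road system propagating at speed $c'$; placed far enough out on one branch, it remains a subsolution of \eqref{syst} because the curvature and angle perturbations there can be made arbitrarily small, and by comparison with step one it is dominated by $(u,v)$ at time $t_0$. Applying the half-plane spreading result then pushes this subsolution along the branch at speed $c'$, and the same argument carried out on the opposite branch gives the symmetric statement. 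Local uniform convergence to $1$ inside the invading region follows once more from the ODE trap.

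The main obstacle is the quantitative justification of the perturbation argument. One must control, uniformly on half-infinite strips $\{|s|\geq R\}$ and up to heights $z$ of order $s$, the cross terms in the Laplacian produced by the non-vanishing curvature $\kappa=\rho''/(1+(\rho')^{2})^{3/2}$, and check that the exponential profile $e^{-\beta z}$ still absorbs them; in particular the $C^{1,\alpha}_{Unif}$ regularity of $\rho'$ assumed in Hypothesis \ref{acf} is presumably what enables the resulting elliptic estimates to close. A secondary, more technical difficulty is that the $(s,z)$ chart is only local, so the supersolution above has to be either extended to, or patched with a cruder supersolution in, the bulk of the cone away from $\partial\Omega$. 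Once these analytic points are settled, the vertex region becomes harmless: the one-dimensional road cannot transport mass at the vertex fast enough to accelerate invasion on the opposite branch, which is precisely what the upper bound formalises and why the final speed is exactly $c_{BRR}$, independently of the opening angle.
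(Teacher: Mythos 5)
Your proposed supersolution on the road, $\bar U(t,s)=M\cosh(\lambda s)e^{\lambda c_1 t}$, does not do what you claim: $\cosh(\lambda s)$ grows like $e^{\lambda|s|}$ for large $|s|$, so $\bar U$ and $\bar V$ \emph{blow up} as $|s|\to\infty$ rather than decaying, and comparison against them gives no information on the set $\{|(x,y)|\ge ct\}$. You presumably intended the valley-shaped profile $e^{-\lambda(|s|-c_1 t)}$, i.e.\ the pointwise minimum of the two flat travelling-wave supersolutions associated to each branch of the road. But even with that fix, a genuine gap remains: the minimum of the two flat supersolutions is a supersolution of the interior PDE and of the first (road) equation, but the third (Robin) boundary condition is not automatically satisfied on the branch where the ``wrong'' exponential attains the minimum. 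Indeed, in the level-set picture, the planar profile $e^{-\alpha(x-ct)-\beta y}$ built for the right branch satisfies the Robin condition only on the right branch; whether it satisfies it on the left branch depends on the opening angle. The paper makes precisely this remark in Subsection~4.1: a single flat supersolution suffices only when $2\theta_0\ge\pi-\arctan(\beta^{\star}/\alpha^{\star})$, and otherwise a different construction is required. The paper's resolution is to work globally in polar coordinates $(r,\theta)$ centred at the vertex and build a supersolution $\gamma\Psi(r,\theta)e^{-\alpha(r-ct)}$ whose angular profile $\Psi$ is designed simultaneously to recover the correct normal-derivative behaviour $\partial_\theta\Psi\sim\beta r$ at both $\theta=\pm\theta_0$ and to keep $\tilde\Delta\Psi/\Psi\le\alpha^2+\beta^2$ in the interior; this is the key new construction (Proposition~\ref{th1}) and nothing in your tubular-coordinate-plus-patching sketch recovers it. Relegating the extension into the bulk of the cone to a ``secondary, more technical difficulty'' misjudges where the real work lies: the bulk patching and the angle-dependent Robin condition are the same obstruction, and they are dealt with in one stroke by $\Psi$.

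On the lower bound, your plan (import the compactly supported subsolutions of \cite{BRR1}, place them far out along a branch, and argue that the geometric perturbation is small) is essentially what the paper does for an \emph{exactly} conical field (Proposition~\ref{sous}), but for an \emph{asymptotically} conical boundary this step is not a routine smallness argument. The paper first changes variables $y\mapsto y+\rho(x)-\tilde\rho(x)$ to flatten the boundary, which turns the system into one with non-constant but asymptotically constant coefficients, and then \emph{rebuilds} the complex oscillatory subsolutions for this variable-coefficient system via the same Rouch\'e-type analysis as \cite{BRR1}, truncating to a positivity component so that the generalised comparison principle (Proposition~\ref{gen}) applies. Your sketch also leans on an ``ODE trap'' to conclude convergence to~$1$, but in the coupled field-road setting this requires the Liouville-type uniqueness of the positive steady state (Theorem~\ref{Liouville}) and the invasion property (Proposition~\ref{inv}), neither of which follow from the scalar ODE argument and both of which are non-trivial in a general field; they must be proven (as the paper does via a sliding argument using the $C^{2,\alpha}$ boundary regularity and the Li--Nirenberg theorem) before the spreading argument can close.
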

For clarity, we study first \emph{exactly conical} fields in Section \ref{conical}. An exactly conical field is an asymptotically conical field with $\rho(x)=a\vert x \vert$ for $\vert x \vert \geq1$.

The difficulty in the case of an exactly conical field is to find supersolutions. In contrast with all the other articles mentioned above, we use almost-radial generalized supersolutions, this is done in subsection \ref{supersol}. In the case of an asymptotically conical field, the difficulty is to build subsolutions (the supersolutions we would have introduced before in the exactly conical case will adapt). This is done in Section \ref{almostconical}.

However, before studying asymptotically conical field, we shall state some (generalized) comparison principle in Section \ref{comp} and a Liouville-type result in Section \ref{LiouvilleSection}. Such results are derived in \cite{BRR1}, the differences here being in dealing with the geometry of the domain. These results hold in the more general case of what we will call a \emph{general field}:
\begin{hyp}\label{gf}{General field.}
Let $\Omega$ be an open connected set. $\Omega$ is a general field if it satisfies the two following hypotheses:

Regularity hypothesis : we require $\partial \Omega$ to be uniformly $C^{2,\alpha}$ for some $\alpha>0$, i.e., it admits a uniformly $C^{2,\alpha}$ atlas.

Geometric hypothesis : there is a smooth open set $S$ such that $\lambda_{S}$, the principal eigenvalue of $-\Delta$ with Dirichlet condition on $S$ satisfies $\lambda_{S} \leq \frac{f^{\prime}(0)}{d}$. Moreover, there is some point $P\in \overline{S}$ such that, for $x \in \Omega$, there is an isometry $\mathbb{I}$ of $\mathbb{R}^{N}$ such that $\mathbb{I}(S)\subset \Omega$ and $\mathbb{I}(P)=x$.

\end{hyp}

Let us make some remarks about these hypotheses. First, the regularity hypothesis implies that $\Omega$ satisfies the uniform interior ball condition~: there is some $r>0$ such that $ \forall x\in \partial \Omega$, $\exists y\in \Omega$ such that $B(y,r) \subset \Omega$ and $x\in \overline{B(y,r)}$.

Secondly, the geometric hypothesis is actually an hypothesis on the size of the domain. Indeed, the principal eigenvalue of $-\Delta$ on a smooth bounded domain $\Omega$ decreases as $\Omega$ increases. This hypothesis simply means that we can slide a set $S$, that is ``large enough", in $\Omega$. If the field is a half-plane, or more generally if it is the epigraph of a function with bounded oscillations (as in the case of Hypothesis \ref{acf}), this hypothesis is automatically satisfied. It is clear that an asymptotically conical field, i.e., satisfying Hypothesis \ref{acf}, satisfies Hypothesis \ref{gf}. This geometric hypothesis prevents the field to collapse at infinity for example.

Our problem relates to those studied in \cite{BRR1,BRR2,BRR3,GMZ,RTV}. The novelty here is the shape of the domains. When studying reaction-diffusion equations, the geometry of the domain can rise serious difficulties. For example, the usual way of proving upper bounds on spreading speeds is to exhibit supersolutions, usually planar. This is not possible anymore in a general domain. Some papers investigate the speed of spreading for reaction-diffusion equations of the KPP type in general domains, see \cite{BHN1,BHN2}.

\section{Preliminary results}\label{comp}

\subsection{Existence and mass conservation}

The existence of solutions of the system \eqref{syst} with a field $\Omega$ satisfying Hypothesis \ref{gf} is classical and will not be treated in details here. Let us just give the main ideas. Consider an initial datum $(U_{0},V_{0})$, bounded and locally Hölder continuous . First, we can define by recurrence $(u_{n},v_{n})$, with $(u_{0},v_{0})=(0,0)$, solution of the two parabolic problems:

\begin{equation*}
\left\{
\begin{array}{llcll}
\partial_{t}u_{n}-D\partial_{ss}^{2}u_{n} + \mu u_{n}&= \nu v_{n-1} &\text{ for }& \quad t >0 \, , \, &(x,y)\in \partial \Omega \\
u_{n}(0,x,y)&=U_{0}(x,y) &\text{ for }& \quad    &(x,y)\in \partial \Omega
\end{array}
\right.
\end{equation*}

\begin{equation*}
\left\{
\begin{array}{llcll}
\partial_{t}v_{n}-d\Delta v_{n} &= f(v_{n}) &\text{ for }& \quad t >0 \, , \, &(x,y)\in  \Omega \\
d\partial_{n}v_{n} +\nu v_{n}&= \mu u_{n} &\text{ for }& \quad t >0 \, , \, &(x,y)\in \partial \Omega \\
v_{n}(0,x,y)&=V_{0}(x,y) & & &(x,y)\in  \Omega
\end{array}
\right.
\end{equation*}

These problems are uniquely solvable, among solutions whose growth is at most $e^{a\vert (x,y)\vert^{2}}$,~ for some $a>0$. Thanks to the parabolic maximum principle, we can get $L^{\infty}$ bounds on the solutions. Then, using $W^{1,2}_{p}$ estimates and Schauder estimates, one can prove that $(u_{n},v_{n})$ converge locally $C^{1,2}_{\alpha}$. Observing that these sequences are non-decreasing, we can pass to the limit to get a solution for the system \eqref{syst}. For the details, see \cite{BRR1,BRR2}, whose proofs adapt easily.

Our system \eqref{syst} has also the following mass conservation property:

\begin{prop}
Let $(u,v)$ be a solution of the system \eqref{syst} with $f=0$ on a general field $\Omega$ satisfying Hypothesis \ref{gf}, arising from the initial datum $(u_{0},v_{0})$ compactly supported. Then:
\[
\| u(t) \|_{L^{1}(\partial\Omega)} + \| v(t) \|_{L^{1}(\Omega)}=\| u(0) \|_{L^{1}(\partial\Omega)} + \| v(0) \|_{L^{1}(\Omega)}.
\]
\end{prop}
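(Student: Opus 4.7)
The plan is to differentiate the two $L^{1}$ norms in time, integrate by parts using the PDEs and the boundary coupling, and show that all the exchange terms cancel exactly. Formally, for $v$ one has
\[
\frac{d}{dt}\int_{\Omega} v\,dxdy = d\int_{\Omega}\Delta v\,dxdy = d\int_{\partial\Omega}\partial_{n}v\,ds = \int_{\partial\Omega}(\mu u-\nu v)\,ds,
\]
by the divergence theorem and the Robin-type boundary condition. For $u$, integrating the first equation of \eqref{syst} along $\partial\Omega$ with respect to arc length gives
\[
\frac{d}{dt}\int_{\partial\Omega} u\,ds = D\int_{\partial\Omega}\partial_{ss}^{2}u\,ds+\int_{\partial\Omega}(\nu v-\mu u)\,ds,
\]
where the Laplace-Beltrami term vanishes because $\partial\Omega$ has no boundary (it is a complete 1-dimensional manifold) and $u$ decays at infinity. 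Summing the two identities, the exchange terms $\pm(\mu u-\nu v)$ cancel and we obtain $\tfrac{d}{dt}(\|u\|_{L^{1}(\partial\Omega)}+\|v\|_{L^{1}(\Omega)})=0$, which is the desired conservation.

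The key step, and the one that requires the most care, is justifying these integrations by parts on the unbounded set $\Omega$. Since $\Omega$ is typically unbounded, one cannot directly invoke Green's identity. I would proceed by truncation: fix $R>0$, introduce $\Omega_{R}=\Omega\cap B(0,R)$ and $\Gamma_{R}=\partial\Omega\cap B(0,R)$, and perform the same integrations on these bounded pieces. This produces extra boundary fluxes on $\partial B(0,R)\cap\overline{\Omega}$ (for the $v$-integration) and endpoint contributions $D[\partial_{s}u]$ at the two points $\partial\Omega\cap\partial B(0,R)$ (for the $u$-integration). The goal is then to show that all these extra terms tend to $0$ as $R\to+\infty$.

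To control these boundary contributions, I would exploit the fact that the initial datum $(u_{0},v_{0})$ is compactly supported. For the purely linear system obtained with $f=0$, one can bound $(u,v)$ from above by a suitable super-solution with Gaussian-type decay in space. Concretely, since both equations are linear parabolic with bounded coefficients and the coupling through $\mu,\nu$ preserves non-negativity and boundedness, one obtains for every $T>0$ estimates of the form $|u(t,x)|+|v(t,x,y)|\leq C\,e^{-\kappa|(x,y)|^{2}/t}$ on $[0,T]$, with analogous bounds for the first-order derivatives coming from parabolic Schauder estimates. These decay rates guarantee that both the flux $d\int_{\partial B(0,R)\cap\Omega}\partial_{r}v$ and the endpoint terms $D\partial_{s}u\big|_{\partial B(0,R)\cap\partial\Omega}$ are integrable and vanish as $R\to\infty$, which legitimizes the formal computation.

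The hardest part will be proving the Gaussian decay (or at least sufficient integrable decay) for the coupled system, since the boundary exchange prevents one from invoking the usual heat-kernel estimates directly. A clean way is to apply the iterative construction used for existence just before this proposition: each $(u_{n},v_{n})$ is built by solving independent linear problems, for which Gaussian upper bounds do hold; the monotone limit then inherits the same upper bound. With this decay in hand, the truncation argument closes and the mass conservation identity follows.
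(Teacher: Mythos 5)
The paper does not give its own proof of this proposition; it simply writes ``See \cite{BRR1} for the proof. The geometry of the domain rises no difficulties here.'' Your proposal reconstructs exactly the argument one would expect: differentiate, integrate by parts, observe the exchange terms cancel, and justify the unbounded-domain integrations via truncation and spatial decay. The formal bookkeeping is right (including the sign in the Robin condition $d\partial_n v=\mu u-\nu v$ versus the divergence theorem, and the vanishing of $\int_{\partial\Omega}\partial_{ss}^2 u\,ds$ for a boundary curve with no endpoints), so the overall structure matches the intended approach.

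The one step where you are vague is the spatial decay estimate, and it is not entirely innocuous. First, the iterates $(u_n,v_n)$ in the existence scheme are not ``independent'' linear problems: each $u_n$ is driven by $v_{n-1}$ and each $v_n$ by $u_n$, so Gaussian bounds propagate only with constants that a priori could degrade with $n$; to conclude for the monotone limit $(u,v)$ you need a bound that is uniform in $n$, and your sentence elides this. Second, building a single Gaussian (or even exponential) supersolution for the coupled system is not as automatic as for a scalar heat equation: the Robin coupling imposes $d\partial_n\bar v\ge \mu\bar u-\nu\bar v$ on $\partial\Omega$, and a decaying profile generically has $\partial_n\bar v<0$, so the ratio $\bar u/\bar v$ on the boundary and the decay rate $\kappa$ must be tuned jointly, and the curvature of $\partial\Omega$ enters through the Laplace--Beltrami term. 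These are fixable (e.g.\ take $\bar v = C e^{\lambda t - \kappa\,\delta(x,y)}$ built from a smoothed version of a distance-like function adapted to $\Omega$, with $\bar u$ a suitable multiple of its trace, and choose $\lambda$ large and $\kappa$ small), but as written the claim ``Gaussian bounds hold for each iterate, hence for the limit'' is the weak link. If you replace it with an explicit exponential supersolution, or simply invoke the uniform boundedness of the solution together with parabolic estimates and dominated convergence to justify the truncation limits, the argument closes cleanly.
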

See \cite{BRR1} for the proof. The geometry of the domain rises no difficulties here.

Let us now turn to the proof of some comparison principles.

\subsection{Comparison principles}

We prove in this section two comparison principles in the framework of a general field $\Omega$ satisfying Hypothesis \ref{gf}. For the first one, the proof follows the same ideas than in \cite{BRR1}, but we will need to deal with the general geometry of the domain. A key tool will be the following result of Li and Nirenberg \cite{LN} : let $\Omega$ be a domain with $\partial \Omega $ of class $C^{2,\alpha}$ and let $G$ be the largest open subset of $\Omega$ such that for every $x$ in $G$ there is a unique closest point on $\partial \Omega$ to $x$. Then the function distance to $\partial \Omega$ is in $C^{2,\alpha}(G \cup \partial \Omega)$.

In the following, the notation $(u,v)\leq(\overline{u},\overline{v})$ (resp.$ (u,v) < (\overline{u},\overline{v})$  ) will be understood componentwise, i.e., $u\leq \overline{u}$ and $v\leq \overline{v}$ (resp.  $u < \overline{u}$ and $v < \overline{v}$). A subsolution (resp. supersolution) of the system \eqref{syst} is a couple satisfying in the classical sense the system with the signs $=$ replaced by $\leq$ (resp. $\geq$) which are in addition continuous up to time $t=0$.

Let us now state and prove our first comparison principle.

\begin{prop}\label{comparison}
Let $\Omega$ be a general field, i.e., satisfying Hypothesis \ref{gf}. Let $(\underline{u},\underline{v})$ and $(\overline{u},\overline{v})$ be respectively a subsolution bounded from above and a supersolution bounded from below of the field-road system \eqref{syst} with field $\Omega$.

Assume that, at $t=0$, these functions are ordered, i.e.,  $(\underline{u}(0,\cdot),\underline{v}(0,\cdot)) \leq (\overline{u}(0,\cdot),\overline{v}(0,\cdot))$. 
Then, either $(\underline{u},\underline{v}) < (\overline{u},\overline{v})$ for all $t>0$ or there is some $T>0$ such that  $(\underline{u},\underline{v}) = (\overline{u},\overline{v})$ for $t \leq T$.
\end{prop}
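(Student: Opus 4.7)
The plan is to adapt the weighted maximum principle argument of \cite{BRR1} to the curved boundary. Setting $w = \overline{u} - \underline{u}$ and $z = \overline{v} - \underline{v}$, and using the Lipschitz character of $f$ together with the $L^\infty$ bounds on $(\underline{v}, \overline{v})$ to write $f(\overline{v}) - f(\underline{v}) = c(t,x,y)\,z$ with $c$ bounded, the pair $(w,z)$ satisfies the linear differential inequalities
\[
(\partial_t - D\partial^2_{ss}) w + \mu w - \nu z \ge 0, \quad (\partial_t - d\Delta) z - cz \ge 0, \quad d\partial_n z + \nu z - \mu w \ge 0,
\]
with $(w(0,\cdot), z(0,\cdot)) \ge 0$. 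The goal is to show $(w,z) \ge 0$; the dichotomy will then follow from the strong maximum principle.

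To handle the unboundedness of $\Omega$, I would introduce a common weight and consider $W = e^{-\alpha t - \beta \varphi} w$, $Z = e^{-\alpha t - \beta \varphi} z$, where $\varphi \in C^2(\overline\Omega)$ is bounded below, has bounded first and second derivatives, grows at spatial infinity, and satisfies $\partial_n \varphi = 0$ on $\partial\Omega$. Using the \emph{same} weight for both components is essential so that the coupling term transforms cleanly into $-\mu W$ in the Robin condition for $Z$; the vanishing of the normal derivative prevents a sign-wrong zeroth-order contribution from the weight in the same Robin condition. This is exactly where the Li--Nirenberg result \cite{LN} on the $C^{2,\alpha}$ regularity of $\mathrm{dist}(\cdot,\partial\Omega)$ in a tubular neighborhood comes in: in this neighborhood, one takes $\varphi$ to be a function of the tangential projection of the point onto $\partial\Omega$, patched with a smooth growing function of $|(x,y)|$ away from the boundary via a cutoff. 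Choosing $\alpha$ large enough (in particular dominating $\mu + \nu + \|c\|_\infty$ plus all constants stemming from first- and second-order terms involving $\varphi$), all zeroth-order coefficients in the transformed inequalities become strictly positive and strictly larger than the coupling weights.

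Arguing by contradiction, assume $\inf_{[0,T]\times\overline\Omega}\min(W, Z) = m < 0$. Since $W, Z$ vanish at spatial infinity and are nonnegative at $t = 0$, this infimum is attained at some $(t_0, P_0)$ with $t_0 > 0$. If $Z$ attains the infimum and $P_0 \in \Omega$, the parabolic maximum principle applied to the transformed inequality for $Z$ contradicts the strictly positive zeroth-order coefficient times the negative value $m$. If $Z$ attains the infimum at $P_0 \in \partial\Omega$, then Hopf's lemma, valid by the uniform interior ball condition (a consequence of the uniform $C^{2,\alpha}$ regularity of $\partial\Omega$), gives $\partial_n Z(t_0, P_0) < 0$; combining this with the transformed Robin inequality and the bound $W(t_0, P_0) \ge m$ produces a contradiction once $\alpha$ is chosen so that the dissipation beats the coupling. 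The symmetric case, where $W$ achieves the infimum on $\partial\Omega$, is handled by the parabolic maximum principle for the one-dimensional Laplace--Beltrami equation on the road, together with the road inequality. This proves $(w,z) \ge 0$. The alternative between strict inequality and equality on a time interval then follows from the strong parabolic maximum principle applied to each component separately, with Hopf's lemma used to propagate equality across the Robin coupling.

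I expect the main obstacle to be the construction of the weight $\varphi$: the coupling forces a single weight for both components, while the curved geometry of $\partial\Omega$ rules out the flat choices of \cite{BRR1}. The Li--Nirenberg regularity of $\mathrm{dist}(\cdot,\partial\Omega)$ is exactly what is needed to build a $C^2$ function whose normal derivative vanishes on $\partial\Omega$ in a controlled way, and without this compatibility, the transformed Robin condition would pick up a wrong-signed term that the choice of $\alpha$ cannot absorb.
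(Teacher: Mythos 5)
Your high-level strategy (weight the difference by a function that grows at spatial infinity, invoke Li--Nirenberg to build a boundary-compatible weight, tune a large zeroth-order term to absorb the reaction and the weight's derivatives) matches the paper's in spirit, but the execution of the boundary case has a genuine gap, and your weight construction has a regularity deficit.

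The gap is in the case where the infimum is negative and attained by $Z$ at a boundary point. Having chosen $\varphi$ with $\partial_n\varphi = 0$, the transformed Robin inequality is $d\partial_n Z + \nu Z - \mu W \geq 0$ on $\partial\Omega$ --- crucially, \emph{with no $\alpha$ or $\beta$ in it}: $e^{-\alpha t}$ multiplies every term of a static boundary inequality and cancels, and $e^{-\beta\varphi}$ cancels too because you arranged $\partial_n\varphi = 0$. Thus the sentence ``once $\alpha$ is chosen so that the dissipation beats the coupling'' cannot be right: $\alpha$ does not appear in the transformed Robin condition. Running the argument honestly, Hopf gives $\partial_n Z(t_0,P_0) < 0$, so $\mu W(t_0,P_0) < \nu Z(t_0,P_0) = \nu m$, i.e.\ $W(t_0,P_0) < \frac{\nu}{\mu}m$. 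Since $m<0$, this contradicts $W \geq m$ only when $\nu \geq \mu$. When $\nu < \mu$, the window $m \le W(t_0,P_0) < \tfrac{\nu}{\mu}m$ is non-empty and nothing breaks. The paper dodges precisely this difficulty by ordering the two contacts rather than jointly minimizing: it first shows by the strong parabolic maximum principle on the road that the $u$-component cannot touch (using the strict positivity of $\hat u - \underline u$ built into the additive barrier, and $\hat v \ge \underline v$ on $[0,T]$), so that when the $v$-contact is examined on $\partial\Omega$, the right-hand side $\mu(\hat u-\underline u)$ is \emph{strictly} positive, giving a contradiction regardless of the ratio $\mu/\nu$. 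You would need to replicate this two-step structure, which the joint-minimum formulation doesn't give you for free; alternatively, rescale the two components as the paper does (adding $\epsilon(\Lambda_1+t+1)$ to $\overline u$ but $\tfrac{\mu}{\nu}\epsilon(\Lambda_2+t+1)$ to $\overline v$) so that the two extra contributions cancel exactly in the Robin condition.

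A secondary but real issue is the construction of $\varphi$. Under the Hypothesis \ref{gf} regularity ($\partial\Omega$ uniformly $C^{2,\alpha}$), Li--Nirenberg gives $\mathrm{dist}(\cdot,\partial\Omega)\in C^{2,\alpha}$ in a tubular neighborhood, but the nearest-point projection $\pi$ onto $\partial\Omega$ is then only $C^{1,\alpha}$ (one derivative lost). A function of the form $g\circ\pi$ is therefore only $C^{1,\alpha}$, which is not enough to apply the parabolic maximum principle to $Z = e^{-\alpha t - \beta\varphi}z$. The paper sidesteps this by never asking for $\partial_n\varphi = 0$: it sets $\Lambda_2 = \sqrt{1+x^2+y^2} - 2\delta^*$ with $\delta^*$ a $C^{2,\alpha}$ regularization of the distance, obtaining $\partial_n\Lambda_2 \geq 0$ (not $=0$), which, together with the additive form of the perturbation and the $\tfrac{\mu}{\nu}$ scaling, is exactly what makes the Robin terms work out with the requisite regularity. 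You should follow that construction rather than requiring the normal derivative to vanish.
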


\begin{proof}
First, multiplying each of the functions $\underline{u},\underline{v},\overline{u},\overline{v}$ by $e^{-lt}$, $l$ being the Lipschitz constant of $f$, we get subsolutions and supersolutions of the following problem (we still call these functions $ \underline{u}, \underline{v}, \overline{u}, \overline{v}$):

\begin{equation}\label{systcomp}
\left\{
\begin{array}{llcl}
\partial_{t}u-D\partial_{ss}^{2}u + (\mu +l) u &= \nu v &\text{ for } \quad &t >0 \, , \, (x,y)\in \partial \Omega \\
\partial_{t}v-d\Delta v &= h(t,v) &\text{ for } \quad &t >0 \, , \, (x,y)\in  \Omega \\
d\partial_{n}v+\nu v &= \mu u &\text{ for } \quad &t >0 \, , \, (x,y)\in \partial \Omega,
\end{array}
\right.
\end{equation}
where $h(t,v) :=e^{-lt}f(ve^{lt})-lv$ is decreasing with respect to $v$.

Consider the function, defined for $(x,y) \in \Omega$,
\[
\Lambda_{2}(x,y):=\sqrt{1+x^{2}+y^{2}}-2\delta^{\star}(x,y),
\]
where $\delta^{\star}$ is a regularized distance to the boundary built as folllow : let $G$ be the largest open subset of $\Omega$ such that for every $x$ in $G$ there is a unique closest point from $\partial \Omega$ to $x$.
Because $\Omega$ is $C^{2,\alpha}$ uniformly, it satisfies the uniform interior ball condition for some radius $r$. Therefore, it is easy to see that $E:=\{ (x,y)\in \overline{\Omega} \, , \, dist((x,y),\partial \Omega) \leq \frac{r}{2}\} \subset G$. The Li-Nirenberg result ensures that the distance to the boundary is $C^{2,\alpha}$ on $E$. We can set $\delta^{\star}(x,y)=dist((x,y),\partial \Omega)$ if $dist((x,y),\partial \Omega) \leq \frac{r}{4}$, $\delta^{\star}(x,y)=0$ if $dist((x,y),\partial \Omega)\geq \frac{r}{3}$ and $\delta^{\star} \in C^{2,\alpha}(\Omega)$. To do so, one can take $\delta^{\star}(x,y):= \phi(dist((x,y),\partial \Omega))$, where $\phi \in C^{\infty}(\mathbb{R}^{+})$ is a real function such that $\phi(x)=x$ if $0\leq x\leq \frac{r}{4}$ and $\phi(x)=0$ if $x\geq \frac{r}{3}$. Now, let:

\[
\Lambda_{1}:=\Lambda_{2}\vert_{\partial \Omega}.
\]
These functions satisfy:
\begin{equation*}
\left\{
\begin{array}{ccl}
\partial^{2}_{ss}\Lambda_{1}  \leq K  &\text{on} \quad &\partial \Omega \\
\Delta\Lambda_{2}  \leq K &\text{on} \quad &\Omega \\
\partial_{n}\Lambda_{2} \geq 0 &\text{on} \quad &\partial \Omega,
\end{array}
\right.
\end{equation*}
where $K$ is a positive constant, depending only on $\Omega$. The last inequality follows from the fact that $\partial_{n}\delta^{\star}=-1$ on $\partial \Omega$, the first inequality comes from the fact that $ \partial \Omega$ is uniformly $C^{2,\alpha}$.

Let us now consider, for $\epsilon>0$,
\[
\begin{array}{llcc}
\hat{u} := \overline{u}+\epsilon(\Lambda_{1}+t+1) \\
\hat{v} := \overline{v}+\frac{\mu}{\nu}\epsilon(\Lambda_{2}+t+1).
\end{array}
\]
Now, multiplying $\Lambda_{1},\Lambda_{2}$ by some small enough $\eta>0$, it is easy to check that $(\hat{u},\hat{v})$ is still supersolution of \eqref{systcomp}. Moreover, at time $t=0$, $\underline{u}<\hat{u}$ and $\underline{v}<\hat{v}$. By continuity and using the fact that the functions $\hat{u},\hat{v}$ go to infinity at infinity and $\underline{u},\underline{v}$ are bounded from above,  we see that this inequality is still true for $t>0$ sufficiently small. Let us define: 

\[
T := \sup\{ t>0 \, : \, \underline{u} \leq \hat{u} \quad \text{on} \quad (0,t)\times \partial{\Omega} \quad \text{and} \quad \underline{v} \leq \hat{v} \quad \text{on} \quad (0,t)\times \Omega \}.
\]
We have $T>0$. If $T=+\infty$, then we are done. Argue by contradiction and assume that $T< +\infty $. Because the subsolutions are bounded from above, we see that at time $T$, we have contact at some point. Assume first that the contact occurs for $\underline{u},\hat{u}$ i.e.,

\[
\min_{\partial \Omega} (\hat{u}-\underline{u})(T,\cdot)=0.
\]
We know that $(\hat{u},\hat{v})$ is a supersolution and that $(\underline{u},\underline{v}) \leq (\hat{u},\hat{v})$ if $t \leq T$. Therefore, for all such $t \leq T$:

\[
 \partial_{t}\hat{u}-D\partial_{ss}^{2}\hat{u} + (\mu +l) \hat{u} \geq \nu \hat{v}\geq \nu \underline{v} \geq  \partial_{t}\underline{u}-D\partial_{ss}^{2}\underline{u} + (\mu +l) \underline{u}.
\]
We can then apply the strong maximum principle on $\partial \Omega$ to infer that $\underline{u}=\hat{u}$ for all $t \leq T$, which is impossible. Then, we are now left to assume that:

\[
\min_{ \overline{\Omega}} (\hat{v}-\underline{v})( T, \cdot)=0.
\]
If the minimum is reached in the interior of $\Omega$, the usual maximum principle tells us that these two functions are equal for $t \leq T$, which is impossible. Therefore, the minimum must be reached on $\partial \Omega$ and then
\[
0 \geq d\partial_{n}(\hat{v}-\underline{v})+\nu (\hat{v}-\underline{v}) \geq \mu (\hat{u}-\underline{u}) >0.
\]
The last  inequality is strict because the equality is ruled out by what precedes. This is again a contradiction, hence $T=+\infty$.
Therefore, letting $ \epsilon \to 0 $ we have that $ \underline{u} \leq \overline{u}$ and $\underline{v}  \leq \overline{v}  $ for all $t>0$. If we have contact at some time $T>0$, we could use the same arguments again and apply the strong maximum principle to get that $(\underline{u},\underline{v}) = (\overline{u},\overline{v})$ for $t \leq T$.
\end{proof}

Now, let us show what will be called in the following the \emph{generalized comparison theorem}. Indeed, we will often need to deal with generalized subsolutions and supersolutions. 
\begin{prop}\label{gen}
Let $\Omega$ be a general field, i.e., satisfying Hypothesis \ref{gf}. Let $E \subset (0,\infty)\times\partial \Omega$ and $F\subset  (0,\infty)\times\Omega $ be two open sets. Let $(u_{1},v_{1})$ be a subsolution of the field-road system \eqref{syst} with field $\Omega$, only for $(t,x,y) \in E$ in the first equation, for $(t,x,y) \in F$ in the second equation and for $(t,x,y) \in \partial F$  in the third equation. Let $(u_{2},v_{2})$ be  a subsolution of \eqref{syst}. Assume that $(u_{1},v_{1})$ and $(u_{2},v_{2})$ are bounded from above and satisfy:
\[
u_{1} \leq u_{2} \, \text{on} \, (\partial E)\cap((0,\infty)\times\partial \Omega), \quad \quad v_{1} \leq v_{2} \, \text{on} \, (\partial F)\cap((0,\infty)\times \Omega).
\]
Define now $\underline{u}$, $\underline{v}$ by:
\begin{equation*}
\begin{array}{l}
 \underline{u}(t,x,y) := \left\{
				\begin{array}{ll}
				max(u_{1}(t,x,y),u_{2}(t,x,y)) \, &\text{ if } \, (t,x,y) \in \overline{E} \\
				u_{2}(t,x,y) \, &\text{ otherwise}\\
				\end{array}
				\right.
				\\
				\\
\underline{v}(t,x,y) := \left\{
				\begin{array}{ll}
				max(v_{1}(t,x,y),v_{2}(t,x,y)) \, &\text{ if } \, (t,x,y) \in \overline{F} \\
				v_{2}(t,x) \, &\text{ otherwise.}\\
				\end{array}
				\right.
\end{array}
\end{equation*}
If these functions satisfy:
\[
\begin{array}{l}
\underline{u}(t,x,y) > u_{2}(t,x,y) \implies \underline{v}(t,x,y) \geq v_{1}(t,x,y) \, \text{ for } (x,y) \in \partial{\Omega} \\
\underline{v}(t,x,y) > v_{2}(t,x,y)  \implies \underline{u}(t,x,y) \geq u_{1}(t,x,y) \, \text{ for } (x,y) \in \partial{\Omega} ,
\end{array}
\]
then, any supersolution $(\overline{u},\overline{v})$ of \eqref{syst} bounded from below and such that $\underline{u} \leq \overline{u}$ and $\underline{v} \leq \overline{v}$ at $t=0$ satisfies  $\underline{u}\leq \overline{u}$ and $\underline{v} \leq \overline{v} $ for all $t>0$.

\end{prop}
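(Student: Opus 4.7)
The plan is to mirror the perturbation argument of Proposition~\ref{comparison}, adapted to the piecewise definition of $(\underline{u}, \underline{v})$. As a preliminary observation, since $\underline{u}\geq u_{2}$ and $\underline{v}\geq v_{2}$ by construction, applying Proposition~\ref{comparison} directly to the pair $(u_{2}, v_{2})$ against $(\overline{u}, \overline{v})$ (which are ordered at $t=0$ by the assumption on $(\underline{u},\underline{v})$) yields $u_{2}\leq \overline{u}$ and $v_{2}\leq \overline{v}$ for all $t>0$. This settles the comparison outside $\overline{E}$ and $\overline{F}$, so it suffices to establish $u_{1}\leq \overline{u}$ on $\overline{E}$ and $v_{1}\leq \overline{v}$ on $\overline{F}$.

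As in the proof of Proposition~\ref{comparison}, I would multiply through by $e^{-lt}$ to reduce to \eqref{systcomp} and introduce the perturbed pair
\[
\hat{u} := \overline{u} + \eta\epsilon(\Lambda_{1}+t+1), \qquad \hat{v} := \overline{v} + \tfrac{\mu}{\nu}\eta\epsilon(\Lambda_{2}+t+1),
\]
which is a strict supersolution of \eqref{systcomp} for $\eta>0$ small enough and satisfies $u_{1}<\hat{u}$ and $v_{1}<\hat{v}$ at $t=0$ as well as at spatial infinity by boundedness. Defining
\[
T := \sup\{t>0\, :\ u_{1}\leq \hat{u} \text{ on } \overline{E}\cap((0,t)\times\partial\Omega),\ v_{1}\leq \hat{v} \text{ on } \overline{F}\cap((0,t)\times\overline{\Omega})\},
\]
I argue by contradiction that $T<+\infty$, so that a contact occurs at $t=T$. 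First dispose of any contact $u_{1}=\hat{u}$: on $\partial E$ this would violate $u_{1}\leq u_{2}\leq \overline{u}<\hat{u}$; at an interior point of $E$ the contact forces $u_{1}>u_{2}$ pointwise, so the compatibility condition $\underline{u}>u_{2}\Rightarrow \underline{v}\geq v_{1}$ yields $v_{1}\leq \underline{v}\leq \hat{v}$ throughout a neighborhood, and the parabolic strong maximum principle applied on the one-dimensional manifold $\partial\Omega$ propagates $u_{1}=\hat{u}$ backward in time until meeting either $\partial E$ or $t=0$, both contradicting the known strict inequality.

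Having ruled out $u$-contact, one has $u_{1}<\hat{u}$ strictly on $\overline{E}$ at $t=T$, and the contact must be $v_{1}(T,x_{0},y_{0})=\hat{v}(T,x_{0},y_{0})$. A contact on $\partial F\cap((0,\infty)\times\Omega)$ is excluded by $v_{1}\leq v_{2}\leq \overline{v}<\hat{v}$. At an interior point of $F$ with $(x_{0},y_{0})\in\Omega$, $v_{1}>v_{2}$, and combining the compatibility $\underline{v}>v_{2}\Rightarrow \underline{u}\geq u_{1}$ with the strong parabolic maximum principle propagates $v_{1}=\hat{v}$ back until hitting $\partial F\cap\Omega$, again a contradiction. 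The main obstacle is the remaining case $(x_{0},y_{0})\in\partial\Omega$: subtracting the boundary conditions for the subsolution $v_{1}$ on $\partial F$ and the strict supersolution $\hat{v}$ gives
\[
d\partial_{n}(\hat{v}-v_{1})+\nu(\hat{v}-v_{1})\geq \mu(\hat{u}-u_{1})
\]
at the contact point, and $\hat{u}-u_{1}>0$ by the previous step makes the right-hand side strictly positive; Hopf's lemma applied to $\hat{v}-v_{1}\geq 0$, which attains its minimum $0$ at this boundary point, forces $\partial_{n}(\hat{v}-v_{1})<0$, contradicting the above inequality. Hence $T=+\infty$, and letting $\epsilon\to 0$ completes the proof. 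The compatibility hypotheses in the statement are tailored precisely to keep this Hopf--Robin mechanism of Proposition~\ref{comparison} alive in the presence of the piecewise definition of $(\underline{u},\underline{v})$.
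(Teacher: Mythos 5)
Your proposal is essentially the route the paper intends: the paper states that Proposition~\ref{gen} ``is essentially an application of the previous comparison principle, Proposition~\ref{comparison},'' and you carry this out by splitting off the easy comparison for $(u_2,v_2)$ and then running the $\hat u,\hat v$ perturbation / first-contact-time argument of Proposition~\ref{comparison} for $(u_1,v_1)$ on $E,F$, which is correct. One small imprecision: your invocation of compatibility is slightly misplaced --- for a $v$-contact at an interior point of $\Omega$ no compatibility is needed (the $v$-equation decouples from $u$ there, and the hypotheses only concern $\partial\Omega$ anyway), whereas for the boundary $v$-contact at $(T,x_0,y_0)\in\partial\Omega$ the positivity $\hat u-u_1>0$ does not follow from ``the previous step'' alone when $(T,x_0,y_0)\notin\overline E$; there one must invoke the compatibility $\underline v> v_2\Rightarrow \underline u\geq u_1$ (valid since $v_1=\hat v>v_2$ forces $\underline v>v_2$) together with $\underline u\leq\hat u$ to conclude $u_1\leq\underline u<\hat u$, and symmetrically the other compatibility is what supplies $v_1\leq\hat v$ on $E\setminus\overline F$ for the $u$-equation.
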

The proof is in exactly the same as in \cite{BRR1}, so we will not repeat it, it is essentially an application of the previous comparison principle, Proposition \ref{comparison}.

\section{Liouville-type result for general fields and invasion}\label{LiouvilleSection}

Our aim here is to prove the unicity of non-null stationary solutions of the system \eqref{syst}, with $\Omega$ a general field, satisfying Hypothesis \ref{gf}. The proof is based on an adaptation of the ``sliding method".

\subsection{Liouville-type result}

\begin{theorem}\label{Liouville}
Let $\Omega$ be a general field, i.e., satisfying Hypothesis \ref{gf} and consider the stationary system given by \eqref{syst} with field $\Omega$:

\begin{equation}\label{stat}
\left\{
\begin{array}{rllll}
-D\partial_{ss}^{2}u &=& \nu v-\mu u &\text{ for }& \quad t >0 \, , \, (x,y)\in \partial \Omega \\
-d\Delta v &=& f(v) &\text{ for }& \quad t >0 \, , \, (x,y)\in  \Omega \\
d\partial_{n}v &=& \mu u-\nu v &\text{ for }& \quad t >0 \, , \, (x,y)\in \partial \Omega.
\end{array}
\right.
\end{equation}
Then, $(\frac{\nu}{\mu},1)$ is the unique non-negative, non-null bounded solution.

\end{theorem}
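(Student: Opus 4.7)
I would prove uniqueness by the sliding method \emph{\`a la} Berestycki--Nirenberg, adapting the argument of \cite{BRR1} to the general geometry of $\Omega$. Let $(u,v)$ be an arbitrary non-negative, non-null, bounded solution of \eqref{stat}. The strategy is three-step: an upper bound $u \leq \nu/\mu$, $v \leq 1$; strict positivity $u > 0$ on $\partial\Omega$ and $v > 0$ in $\Omega$; and a matching lower bound.

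For the upper bound, slide constants down. Since $f \leq 0$ on $[1,\infty)$ (standard KPP extension) and the exchange terms cancel, for every $K \geq 1$ the constant pair $(K\nu/\mu, K)$ is a stationary supersolution of \eqref{stat}, strict when $K > 1$. As $(u,v)$ is bounded, $(u,v) \leq (K\nu/\mu, K)$ for $K$ large enough, so $K^* := \inf\{K \geq 1 : (u,v) \leq (K\nu/\mu, K)\}$ is well-defined. If $K^* > 1$, any contact point between $(u,v)$ and $(K^*\nu/\mu,K^*)$ is ruled out by the elliptic strong maximum principle on the coupled system (the stationary analogue of Proposition \ref{comparison}) combined with $f(K^*)<0$; hence $K^* = 1$. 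Strict positivity then follows from the elliptic strong maximum principle applied to $v$, the boundary condition $d\partial_n v + \nu v = \mu u$, and the Hopf lemma: if $v$ vanishes somewhere in $\Omega$ then $v \equiv 0$, which in turn forces $u \equiv 0$, contradicting non-triviality.

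The lower bound is the heart of the proof. For $\tau \in [0,1]$ the pair $(\tau\nu/\mu, \tau)$ is a stationary subsolution of \eqref{stat} since $f(\tau) \geq 0$. Define
\[
\tau^* := \sup\bigl\{ \tau \in [0,1] :\ v \geq \tau \text{ in } \Omega \text{ and } u \geq \tau\nu/\mu \text{ on } \partial\Omega \bigr\},
\]
so it suffices to prove $\tau^* = 1$. The main obstacle is the unboundedness of $\Omega$: pointwise positivity of $v$ does not produce a uniform lower bound, so neither $\tau^* > 0$ nor the upgrade from strict pointwise inequality to a uniform gap $v - \tau^* \geq \delta > 0$ (necessary to contradict the maximality of $\tau^*$ when $\tau^* < 1$) is automatic.

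This is precisely where Hypothesis \ref{gf} enters. Let $\varphi_S > 0$ be the Dirichlet principal eigenfunction of $-\Delta$ on $S$, normalized by $\|\varphi_S\|_\infty = 1$. The KPP monotonicity of $f(v)/v$ together with $d\lambda_S \leq f'(0)$ ensures that for $\epsilon > 0$ small,
\[
-d\Delta(\epsilon\varphi_S) = d\lambda_S\,\epsilon\varphi_S \leq f(\epsilon\varphi_S) \quad \text{on } S,
\]
with $\epsilon\varphi_S \equiv 0$ on $\partial S$. For each $x \in \Omega$, the geometric hypothesis supplies an isometry $\mathbb{I}_x$ with $\mathbb{I}_x(S) \subset \Omega$ and $\mathbb{I}_x(P) = x$; extending $\epsilon\varphi_S \circ \mathbb{I}_x^{-1}$ by zero outside $\mathbb{I}_x(\overline{S})$, and pairing it with $u \equiv 0$, yields a generalized subsolution of \eqref{stat} to which the generalized comparison principle (Proposition \ref{gen}) applies. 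This gives $v(x) \geq \epsilon\varphi_S(P)$ with $\epsilon$ depending only on $f$ and $S$, hence $\inf_{\Omega} v > 0$ and, via the boundary coupling, $\inf_{\partial\Omega} u > 0$. In particular $\tau^* > 0$. If $\tau^* < 1$, the strong maximum principle forbids $v \equiv \tau^*$ (because $f(\tau^*) > 0$ would make $\tau^*$ a strict subsolution), and the same eigenfunction/isometry argument, now applied to the non-negative pair $(u - \tau^*\nu/\mu, v - \tau^*)$ solving a shifted coupled system, promotes this strict inequality to a uniform gap, contradicting the maximality of $\tau^*$. Hence $\tau^* = 1$, which combined with the upper bound gives $(u,v) \equiv (\nu/\mu, 1)$.
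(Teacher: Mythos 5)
Your overall strategy is close in spirit to the paper's (sliding method plus the eigenfunction construction from Hypothesis \ref{gf}), but several key steps do not close as written, and the most important one uses the wrong tool.

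The lower-bound argument has a circularity. To obtain $\inf_\Omega v>0$ you invoke Proposition \ref{gen} after fixing $\epsilon$ ``depending only on $f$ and $S$,'' but a comparison principle only propagates an ordering that is already known to hold at the start: to use $(0,\epsilon\varphi_S\circ\mathbb{I}_x^{-1})\leq(u,v)$ you would need $\epsilon\varphi_S\circ\mathbb{I}_x^{-1}\leq v$ in advance, which is the very quantity you are trying to bound from below, uniformly in $x$. The paper's lemma avoids this by taking, for each $x$, the \emph{maximal} $\eta_1$ such that $\eta_1\varphi\leq v$ (positive since $v>0$ by the strong maximum principle), then ruling out $\eta_1<\eta_0$ via a contact argument: at the contact point the elliptic strong maximum principle would force $v=\eta_1\varphi$, impossible because $\varphi$ vanishes on $\partial E$. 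That contact argument, not the comparison principle, is what produces the uniform constant, and it is missing from your write-up.

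Even granted $\inf v>0$ away from the boundary, the passage to $\inf_{\overline\Omega}v>0$ and then to $\inf_{\partial\Omega}u>0$ is not a one-line ``via the boundary coupling.'' Because $\Omega$ is unbounded the infima need not be attained; the paper devotes two separate steps to this, extracting translated limits (using the uniform interior ball condition for $v$, and unfolding $\partial\Omega$ in local coordinates with Ascoli--Arzel\`a for $u$) and then applying Hopf's lemma or the strong maximum principle at the limit contact point. The same extraction/Hopf machinery is what is needed in your last step: the assertion that $(u-\tau^*\nu/\mu,\,v-\tau^*)$ solves ``a shifted coupled system'' to which the eigenfunction argument applies is not correct --- the second equation becomes $-d\Delta(v-\tau^*)=f(v)$, whose right-hand side does \emph{not} vanish when $v-\tau^*=0$ (indeed $f(\tau^*)>0$), so the Dirichlet-eigenfunction subsolution argument does not transfer. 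What does work, and is what the paper does, is to extract a translated limit at a putative contact and use $f(\tau^*)>0$ to get a sign for the strong maximum principle (interior) or Hopf's lemma (boundary).

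Finally, your added upper-bound step is not in the paper and is not free: on an unbounded domain the supremum of $(u,v)$ need not be attained, so ``any contact point is ruled out'' is not available without the same extraction argument; and with the paper's own convention of extending $f$ by zero beyond $1$, every constant pair $(K\nu/\mu,K)$ with $K\geq1$ \emph{is} a stationary solution and $f(K^*)<0$ fails, so you would need a strictly negative extension to make the argument run. The paper circumvents all of this by comparing two arbitrary bounded positive solutions directly (so uniqueness follows by symmetry in the two solutions), never needing to prove $v\leq1$ as a separate step.
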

Let us start with the following lemma :
\begin{lemma}
Let $(u,v)$ be a positive non-null bounded solution of \eqref{stat}.
Then,

\begin{equation}\label{eqinf}
\forall \epsilon>0 \quad , \quad \inf_{ dist(x,\partial\Omega)\geq \epsilon}v >0.
\end{equation}

\end{lemma}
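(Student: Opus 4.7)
My plan is to prove \eqref{eqinf} in two stages: first establish the pointwise positivity $v>0$ throughout $\Omega$ via the strong maximum principle, then upgrade to a uniform lower bound on $\{\mathrm{dist}(\cdot,\partial\Omega)\geq\epsilon\}$ by transplanting a compactly supported eigenfunction ``bump'' subsolution under every such point, using the set $S$ furnished by Hypothesis \ref{gf}.

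For pointwise positivity, I would rewrite $-d\Delta v=f(v)$ as the linear elliptic equation $-d\Delta v-c(x)v=0$ with $c(x):=f(v(x))/v(x)$ where $v>0$, extended by any bounded value elsewhere. The coefficient $c$ is bounded because $f$ is Lipschitz with $f(0)=0$, and the strong maximum principle then yields either $v>0$ in $\Omega$ or $v\equiv 0$; the latter would force $\mu u\equiv 0$ on $\partial\Omega$ via the boundary condition $d\partial_n v=\mu u-\nu v$, contradicting the non-nullity of $(u,v)$.

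For the uniform bound, let $\varphi_S>0$ denote the principal Dirichlet eigenfunction of $-\Delta$ on $S$ with eigenvalue $\lambda_S\leq f'(0)/d$. Since $t\mapsto f(t)/t$ is non-increasing and tends to $f'(0)$ as $t\to 0^+$, one picks $\eta_0>0$ small enough that $-d\Delta(\eta_0\varphi_S)=d\lambda_S\eta_0\varphi_S\leq f(\eta_0\varphi_S)$ on $S$, so that $\eta_0\varphi_S$ is a classical subsolution of $-d\Delta w=f(w)$ on $S$ vanishing on $\partial S$. Given any $x\in\Omega$ with $\mathrm{dist}(x,\partial\Omega)\geq\epsilon$, Hypothesis \ref{gf} provides an isometry $\mathbb{I}$ with $\mathbb{I}(S)\subset\Omega$ and $\mathbb{I}(P)=x$; I may take $P$ interior to $S$, possibly after shrinking $S$, which only decreases $\lambda_S$ and thus preserves the hypothesis. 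Extending $w:=\eta_0\varphi_S\circ\mathbb{I}^{-1}$ by zero outside $\mathbb{I}(S)$, the pair $(0,w)$ is a generalized stationary subsolution of \eqref{syst}, and I would conclude by a maximal-$\eta$ sliding argument: setting $\eta^*:=\sup\{\eta\in(0,\eta_0]:\eta\varphi_S\circ\mathbb{I}^{-1}\leq v\text{ on }\mathbb{I}(\overline{S})\}$, any interior contact at $\eta=\eta^*$ is ruled out by the strong maximum principle applied to the nonnegative function $v-\eta^*\varphi_S\circ\mathbb{I}^{-1}$ on $\mathbb{I}(S)$, and a boundary contact is impossible because $\varphi_S=0$ on $\partial S$ while $v>0$ in $\Omega$. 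Hence $\eta^*=\eta_0$ and $v(x)=v(\mathbb{I}(P))\geq\eta_0\varphi_S(P)>0$ uniformly in $x$. The main obstacle is ensuring the translated bump $\mathbb{I}(S)$ lies strictly inside $\Omega$ with a uniformly positive base value $\varphi_S(P)$ at its center --- this is precisely what Hypothesis \ref{gf} provides, while the KPP structure combined with the eigenvalue bound $\lambda_S\leq f'(0)/d$ is exactly what allows the eigenfunction to serve as a valid subsolution.
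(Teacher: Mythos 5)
Your overall strategy (place a scaled Dirichlet eigenfunction bump under $v$ using the slidable set $S$ of Hypothesis~\ref{gf}, then run a maximal-$\eta$ argument) is the same as the paper's, and your pointwise positivity argument is fine. However, there are two genuine gaps in the way you instantiate it, and both come from using $\mathbb{I}(S)$ itself as the eigenfunction domain and $\mathbb{I}(P)=x$ as the evaluation point.

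First, Hypothesis~\ref{gf} only guarantees $\lambda_S \leq f'(0)/d$, and equality is permitted. If $\lambda_S = f'(0)/d$, your subsolution inequality $d\lambda_S\,\eta_0\varphi_S \leq f(\eta_0\varphi_S)$ requires $f(s)/s \geq f'(0)$ for small $s>0$; but by the KPP property $s\mapsto f(s)/s$ is non-increasing with limit $f'(0)$ at $0^+$, so $f(s)/s \leq f'(0)$ always, and the inequality is \emph{strict} for generic $f$ (e.g.\ the logistic $f(s)=s(1-s)$ gives $f(s)/s=1-s<1$). Then no $\eta_0>0$ works. Second, the hypothesis only puts $P\in\overline{S}$, so $P$ may lie on $\partial S$, in which case your final lower bound $\eta_0\varphi_S(P)$ vanishes. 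Your proposed fix --- ``shrinking $S$, which only decreases $\lambda_S$'' --- goes the wrong way: by domain monotonicity of the principal Dirichlet eigenvalue, shrinking $S$ \emph{increases} $\lambda_S$, so you may lose the hypothesis rather than preserve it.

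The paper's proof resolves both issues at once by not running the eigenfunction argument on $\mathbb{I}(S)$ directly. It fixes $y$ with $B(y,r)\subset\Omega$ and $|x-y|=r-\epsilon$ (possible by the uniform interior ball condition, a consequence of the regularity part of Hypothesis~\ref{gf}), applies the isometry so that $\mathbb{I}(P)=y$, and works on a smooth set $E$ squeezed between $\mathbb{I}(S)\cup B(y,r-\epsilon)$ and $\mathbb{I}(S)\cup B(y,r-\epsilon/2)$. Since $E$ strictly contains $\mathbb{I}(S)$, $\lambda_E<\lambda_S\leq f'(0)/d$, which restores the strict gap needed for the subsolution; and $x$ lies in $E$ at distance at least $\epsilon/2$ from $\partial E$, so the normalized eigenfunction is bounded below there by a constant $m>0$ that depends only on $\epsilon$, $S$, and $r$ (not on $y$), which gives the uniformity in $x$ directly. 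You should incorporate this enlargement step; without it the proof as written fails for admissible $S$ and $f$.
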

\begin{proof}
Let $S$ be the set from Hypothesis \ref{gf}. Let $\epsilon > 0$. Without loss of generality, we will assume that $\epsilon < r$, where $r$ is the radius of the ball from the uniform ball condition. Take $x\in \Omega$ such that $dist(x,\partial \Omega) \geq \epsilon$. Then, there is $y\in \Omega$ such that $x\in B(y,r)\subset \Omega$ and $\vert x -y \vert=r-\epsilon$. Let us consider the isometry $\mathbb{I}$ such that $\mathbb{I}(S) \in \Omega$ and $\mathbb{I}(P)=y$. Let us call $\mathcal{E}:=\mathbb{I}(S) \cup B(y,r-\frac{\epsilon}{2}) \subset \Omega$ and let $E$ be a smooth set such that $\mathcal{E} \subset E \subset \mathbb{I}(S) \cup B(y,r-\epsilon)\subset \Omega$. We can choose this regularization independent of $y$.

Let us consider now the principal eigenfunction $\phi$ of $-\Delta$ on $E$ with Dirichlet conditions and the principal eigenvalue $\lambda$ for this problem, i.e., $-\Delta\phi=\lambda\phi$ on $E$, $\phi = 0$ on $\partial E $, $\phi>0$ in $E$. We normalize so that $\| \phi\|_{L^{\infty}}=1$. Because the principal eigenvalue is decreasing with respect to the inclusion, we have, by hypothesis, $\lambda \leq \lambda_{S} < \frac{f^{\prime}(0)}{d}$ ($\lambda_{S}$ is given by Hypothesis \ref{gf}). Now, observe that $\exists \eta_{0}$ such that $\forall s \in [0,\eta_{0}], f(s)\geq d\lambda s$. This implies that, if $\eta \in [0,\eta_{0}]$, $\eta \phi$ is a subsolution of $-d\Delta v = f(v)$ in $E$. Now, let 
\[
m := \inf \{\phi(x) \,, \, x\in E \,\text{ and } \, dist(x,\partial E) \geq \frac{\epsilon}{2} \}>0,
\]

which does not depend on $y$. Moreover, from the strong maximum principle, we have $v>0$ on $\Omega$. This means that there is $\eta_{1}>0$ such that for all $\eta < \eta_{1}$, $\eta \phi \leq v$ and for $\eta = \eta_{1}$, there is contact on $E$ between $v$ and $\eta_{1} \phi$ ($\eta_{1}$ is taken maximal). Therefore, $\eta_{1}\phi$ can not be a subsolution of the problem, because otherwise the strong maximum principle would imply  $v=\eta_{1}\phi$, which is impossible because of the Dirichlet condition on $\phi$. This means that we necessarily have $\eta_{1}>\eta_{0}$. Therefore, we have $\eta_{0}m \leq \eta_{1}\phi(x) \leq v(x)$. This does not depend on $y$ and so we have that:
\[
 \inf_{dist(x,\partial\Omega)>\epsilon}v \geq \eta_{0}m>0.
\]

\end{proof}
From the previous lemma, we immediately get:

\begin{cor}\label{conv1}
Let $(u,v)$ be a non-null solution of \eqref{stat}. Then
\[
\lim_{dist((x,y),\partial\Omega)\to +\infty}v(x,y)=1.
\]
\end{cor}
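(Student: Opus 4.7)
The plan is to derive the corollary from the previous lemma by a blow-down/compactness argument combined with a classical Liouville-type result for KPP equations on the whole space. Fix an arbitrary sequence $(x_n,y_n)\in \Omega$ with $d_n:=dist((x_n,y_n),\partial\Omega)\to +\infty$; it suffices to show $v(x_n,y_n)\to 1$.

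First I would translate the solution by defining $v_n(z):=v(z+(x_n,y_n))$, which is defined at least on the ball $B(0,d_n)$ and satisfies $-d\Delta v_n=f(v_n)$ there. Since $v$ is bounded and $f$ is locally Lipschitz, interior Schauder estimates applied inside $B(0,d_n-1)$ yield uniform $C^{2,\alpha}_{\mathrm{loc}}$ bounds on $v_n$. A diagonal extraction then produces a subsequence (still denoted $v_n$) converging in $C^2_{\mathrm{loc}}(\mathbb{R}^N)$ to some bounded $v_\infty\geq 0$ solving
\[
-d\Delta v_\infty = f(v_\infty)\quad\text{on } \mathbb{R}^N.
\]

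Next I would transfer the lower bound from the previous lemma to $v_\infty$. For any $R>0$ and $|z|\leq R$, as soon as $d_n\geq R+1$ one has $dist(z+(x_n,y_n),\partial\Omega)\geq 1$, so by \eqref{eqinf} there exists a constant $c=\inf_{dist(\cdot,\partial\Omega)\geq 1}v>0$, independent of $n$ and $z$, such that $v_n(z)\geq c$. Passing to the limit, $v_\infty\geq c>0$ on all of $\mathbb{R}^N$.

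Finally I would invoke the classical Liouville-type theorem for KPP-type equations: under the KPP hypothesis, every bounded solution $w$ of $-d\Delta w = f(w)$ on $\mathbb{R}^N$ with $\inf w>0$ satisfies $w\equiv 1$ (one compares $w$ with the spatially homogeneous ODE $w'=f(w)$, or equivalently argues via \cite{AW}). Applied to $v_\infty$, this gives $v_\infty\equiv 1$, hence in particular $v(x_n,y_n)=v_n(0)\to v_\infty(0)=1$. Since the sequence was arbitrary, $v(x,y)\to 1$ as $dist((x,y),\partial\Omega)\to+\infty$.

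The only delicate step is the uniform positive lower bound used to rule out the trivial limit $v_\infty\equiv 0$; this is exactly what the previous lemma was designed to provide, and the fact that the bound there depends only on $\epsilon$ (not on the point) is crucial. Everything else is a routine compactness/Liouville combination once this ingredient is in hand.
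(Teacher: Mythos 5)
Your proof is correct and is precisely the ``usual elliptic estimates and extractions'' argument the paper alludes to: translate, apply interior Schauder estimates, extract a $C^2_{\mathrm{loc}}$ limit $v_\infty$ solving the KPP equation on the whole plane, use the lower bound from the preceding lemma to get $\inf v_\infty>0$, and conclude $v_\infty\equiv 1$ by the classical Liouville result for KPP nonlinearities (e.g.\ by evaluating $-d\Delta v_\infty=f(v_\infty)$ at a near-minimum of $v_\infty$ after a further translation). The only cosmetic remark is that the final step should technically be phrased as ``every subsequence admits a further subsequence along which $v(x_n,y_n)\to 1$,'' which yields convergence of the full sequence.
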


The proof is easy and comes from usual elliptic estimates and extractions. Let us now prove the Liouville-type result, Proposition \ref{Liouville}. The proof is divided into three steps. First, we show that $\inf_{\overline{\Omega}} v >0$, then, that the same holds for $u$ on $\partial \Omega$. To conclude, we use a sliding argument.

\begin{proof} 
\emph{First step.} Let us show that $\inf_{\overline{\Omega}} v>0$. Let us call $m:=\inf_{\overline{\Omega}} v$,
and let $(x_{k},y_{k})\in \Omega$ be a minimizing sequence for $v$. By contradiction, we assume that $m=0$. \\
From \eqref{eqinf}, up to extraction, we know that $dist((x_{k},y_{k}),\partial\Omega)\to 0 $.
For $k\in \mathbb{N}$, we consider $B_{k}\subset \Omega$ a ball tangent to the road with radius $r$ (we recall that $r$ is the radius of the ball from the uniform ball condition satisfied by $\Omega$) such that $(x_{k},y_{k})\in B_{k}$ (this is possible for $k$ large enough, so that $dist((x_{k},y_{k}),\partial \Omega)\leq r $). Call $(a_{k},b_{k})$ the center of this ball and consider the sequence of translated functions $v_{k}(x,y)=v(x+a_{k},y+b_{k})$. 
The functions $v_{k}$ are defined on $\overline{B}((0,0),r)$.
From standard interior and boundary elliptic estimates, there is a function $ v_{\infty} \geq 0$ such that $v_{k}\to v_{\infty}$ and such that there is a point $\xi \in \partial B((0,0),r)$ such that $v_{\infty}(\xi)=0$ and:
\[
\left\{
\begin{array}{lllr}
-d\Delta v_{\infty} &=& f(v_{\infty}) &\quad \text{on} \quad B((0,0),r)  \\
d\partial_{n}v_{\infty}(\xi) &\geq& 0,&
\end{array}
\right.
\]
where the second property comes from the fact that $\partial_{n}v_{k} \geq -\nu v_{k} $ on the translated boundary $\partial \Omega -\{(a_{k},b_{k})\}$ and the use partial boundary estimates, see lemma \cite[6.29]{GT}. The function $v_{\infty}$ is non-null because of \eqref{eqinf} and reaches a minimum at the boundary point $\xi$. Applying Hopf lemma at $\xi$ we get a contradiction.
Therefore, we have shown that:

\[
\inf_{\overline{\Omega}}v>0.
\]
\emph{Second step.} Let us show that $\inf_{\overline{\partial \Omega}} u>0$. We consider a minimizing sequence $(x_{k},y_{k})\in \partial \Omega$ and assume that the infimum of $u$ is zero, so that $u(x_{k},y_{k})\to 0$. 
Recall that the boundary $\partial \Omega$ is uniformly of class $C^{2,\alpha}$. This means that there is some $\lambda>0$ such that $\partial \big( \Omega - (x_{k},y_{k}) \big)\cap B((0,0),\lambda)$ is the graph of a $C^{2,\alpha}$ function. More precisely, from our hypotheses, we have the following : there is $\epsilon>0$ and there is a family of functions $\rho_{k}\in C^{2,\alpha}\left(\left] -\epsilon, \epsilon \right[\right)$, such that $\rho_{k}(0)=0$ and $\| \rho_{k} \|_{C^{2,\alpha}}$ is bounded independently of $k$. Moreover, there is a family of rotations $R_{k}$ such that

\[
R_{k}\Big( \partial \big( \Omega - (x_{k},y_{k}) \big)\cap B((0,0),\lambda) \Big) = \left\{ (s,\rho_{k}(s)) \,  , \, s \in \left] - \epsilon, \epsilon \right[ \right\}.
\]
Let  us define 
\[
\phi_{k}(s):=u(R^{-1}_{k}(s,\rho_{k}(s))+(x_{k},y_{k})) \, , \, s \in \left] - \epsilon, \epsilon \right[  .
\]
We have $\phi_{k}(0)=u(R^{-1}_{k}(0,0)+(x_{k},y_{k}))=u(x_{k},y_{k})$. Writing the Laplace-Beltrami in local coordinates, we see that $\phi_{k}(s)$ verifies the following uniformly elliptic equation on $\big] -\epsilon, \epsilon \big[$ :
\[
-a_{k}(s)\partial^{2}_{ss}\phi_{k}(s)+b_{k}(s)\partial_{s}\phi_{k}(s)+\mu \phi_{k}(s) \geq \nu \inf v,
\]
where $a_{k}=D\frac{1}{1+(\rho_{k}^{\prime})^{2}}$, $b_{k}=D\frac{\rho_{k}^{\prime}\rho_{k}^{\prime \prime}}{1+(\rho_{k}^{\prime})^{2}}$. Because of the hypotheses on the $\rho_{k}$, the families $(a_{k})_{k}$ and $(b_{k})_{k}$ are uniformly equicontinuous. Therefore, we can apply Ascoli-Arzela theorem on $\big[ -\frac{\epsilon}{2}, \frac{\epsilon}{2}\big]$ to get that, up to extraction, $a_{k}$ and $b_{k}$ converge uniformly to some functions $a_{\infty},b_{\infty}$. We can use usual interior elliptic estimates on $\phi_{k}$ and get that there is a function $\phi_{\infty}$ such that, for all $s\in \left]-\frac{\epsilon}{2},\frac{\epsilon}{2} \right[$ :

\[
-a_{\infty}(s)\partial^{2}_{ss}\phi_{\infty}(s)+b_{\infty}(s)\partial_{s}\phi_{\infty}(s)+\mu \phi_{\infty}(s) \geq \nu \inf v,
\]
with the property that there is a constant $c>0$ such that $a_{\infty}\geq c$ (this is obvious from the expression of $a_{k}$). Moreover, $\phi_{\infty}(0,0)=0$ is a minimum for $\phi_{\infty}$. Because $\nu \inf v>0$, this leads to a contradiction. Therefore, we have shown that:

\[
\inf_{\partial\Omega}u>0.
\]

\emph{Third step.} Let us now conclude the unicity. Let $(u_{1},v_{1})$ and $(u_{2},v_{2})$ be two bounded positive solutions of the stationary system. Then, define: 
\[
\theta:=\sup\{t>0/(u_{1},v_{1})>t(u_{2},v_{2})\}>0.
\]
Since those functions are bounded and since their infimum is strictly positive, $\theta$ is well defined.
By continuity $(u_{1},v_{1})\geq\theta(u_{1},v_{1})$. Let us show that $\theta\geq1$. If we do so, exchanging the roles of $(u_{1},v_{1})$ and $(u_{2},v_{2})$ will yield $(u_{1},v_{1})=(u_{2},v_{2})$. By contradiction, let us assume that $\theta<1$.

Let us define $U :=u_{1}-\theta u_{2}$ and $V :=v_{1}-\theta v_{2}$.
These functions are non-negative. By definition of $\theta$, there
is a sequence $(x_{k},y_{k})$ such that, either $U(x_{k},y_{k})\to 0$
or $V(x_{k},y_{k})\to 0$. To start, assume that $V(x_{k},y_{k})\to 0$. Because of Corollary \ref{conv1}, up to extraction, we can assume that $dist((x_{k},y_{k}),\partial \Omega)$ is bounded.

First, assume that there is $\epsilon>0$ such that $dist((x_{k},y_{k}),\partial \Omega)>\epsilon$. We can assume that $\epsilon<r$.  We will call $\delta$ the Euclidian distance in the plane.
As we did before, we can consider a sequence $(a_{k},b_{k})$ such that $(x_{k},y_{k})\in \overline{B}_{r}(a_{k},b_{k})$ and $\delta((x_{k},y_{k}),(a_{k},b_{k}))\leq r-\epsilon$. Up to extraction, we can assume that  $(x_{k}-a_{k},y_{k}-b_{k})$ converges in $\overline{B}_{r-\epsilon}(0,0)$ to some $(\tilde{x},\tilde{y})$. We can consider $v_{i,k}(x,y)=v_{i}(x+a_{k},y+b_{k})$, $i=1,2$, defined on $\overline{B}_{r}(0,0)$. Thanks to elliptic estimates, these functions converge $C^{2}_{loc}$ to $v_{i,\infty}$, $i=1,2$. These two functions are non-null because of the estimates on the infimum \eqref{eqinf}. Let $V_{\infty}=v_{1,\infty}-\theta v_{2,\infty}$. This function is positive, and satisfies: 
\[
-d\Delta V_{\infty}=f(v_{1,\infty})-\theta f(v_{2,\infty})>f(v_{1,\infty})- f(\theta v_{2,\infty}).
\]
This comes from the fact that $0<\theta<1$ and the KPP hypothesis. Therefore, there is a bounded function $b$ such that:
\[
-d\Delta V_{\infty}+bV_{\infty}>0.
\]
Moreover, $V_{\infty}(\tilde{x},\tilde{y})=0$. As $(\tilde{x},\tilde{y})\in B_{r}(0,0)$, we can apply the strong elliptic maximum principle at point $(\tilde{x},\tilde{y})$ to get a contradiction.

Assume now that $dist((x_{k},y_{k}),\partial \Omega)\to 0$. Using the same sequences $v_{i,k}$ and extracting again, we get that there is a function $V_{\infty}$ and $\xi \in \partial B_{r}(0,0)$ such that $V_{\infty}(\xi)=0$ and 
\[
\left\{
\begin{array}{rll}
-d\Delta V_{\infty}+bV_{\infty} &>0 &\quad \text{on} \quad B((0,0),r)  \\
d\partial_{n}V_{\infty}(\xi) &\geq 0.
\end{array}
\right.
\]
Again, the last equation comes from $d\partial_{n}(v_{1,k}-\theta v_{2,k})\geq - \nu(v_{1,k}-\theta v_{2,k})$ on $\partial \Omega$ and the use of portion boundary estimates. But using Hopf maximum principle at point $\xi$ leads to a contradiction. Then, we have shown that $\inf_{\overline{\Omega}} V>0$ on $\overline{\Omega}$.

Now, we are left to consider the case $U(x_{k},y_{k})\to 0$. But using the same kind of arguments as in the second step (i.e., unfolding the boundary), we will reach a contradiction. This yields the result.
\end{proof}

\subsection{Invasion}
 
 From what precedes, we can deduce that our system has the following invasion property:
 \begin{prop}\label{inv}
 Let $\Omega$ be a general field, i.e., satisfying Hypothesis \ref{gf}. Let $(u_{0},v_{0})$ be an initial non-negative non-null datum. Then, if $(u(t,x,y),v(t,x,y))$ is the solution of \eqref{syst} with field $\Omega$ arising from $(u_{0},v_{0})$, there holds:
 \[
 u(t,x,y) \underset{t \to +\infty}{\longrightarrow} \frac{\nu}{\mu} \quad , \quad v(t,x,y) \underset{t \to +\infty}{\longrightarrow}  1, 
\]
locally $C^{2,\alpha}$ in $(x,y)\in \partial \Omega$ (respectively in $\Omega$).

 \end{prop}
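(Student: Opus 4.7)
\medskip
\noindent\textbf{Proof plan.} The strategy is a classical squeeze between a monotone-decreasing supersolution orbit and a monotone-increasing subsolution orbit, identifying both limits with $(\nu/\mu, 1)$ via the Liouville-type Theorem \ref{Liouville}.

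First I would construct an upper barrier. Pick $M \ge 1$ large enough that $M \ge \|v_0\|_{\infty}$ and $M\nu/\mu \ge \|u_0\|_{\infty}$. By the KPP property $f(M) \le 0$, so the constant pair $(M\nu/\mu, M)$ is a stationary supersolution of \eqref{syst}. Proposition \ref{comparison} then implies that the solution $(\bar u, \bar v)$ issued from this datum lies above $(u,v)$ for all $t > 0$ and, since the datum is itself a supersolution, is non-increasing in $t$.

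Next I would produce a lower barrier. The parabolic strong maximum principle, together with Hopf's lemma used on the exchange condition (as in the Liouville proof), gives $v(t_0, \cdot) > 0$ on $\overline{\Omega}$ and $u(t_0, \cdot) > 0$ on $\partial \Omega$ for any fixed $t_0 > 0$. Using Hypothesis \ref{gf}, slide an isometric copy $\mathbb{I}(S) \subset \Omega$ and let $\phi > 0$ be the principal Dirichlet eigenfunction of $-\Delta$ on $\mathbb{I}(S)$ with eigenvalue $\lambda$; by slightly enlarging $S$ if necessary one may arrange $\lambda < f'(0)/d$. Since $f(s)/s \to f'(0)$ as $s \to 0^+$, for $\eta$ small enough the function $w := \eta \phi$, extended by zero, satisfies $-d\Delta w \le f(w)$, and being compactly supported strictly inside $\Omega$ it makes $(0, w)$ trivially satisfy the road equation and the Robin exchange. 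Shrinking $\eta$ further so that $(0, w) \le (u(t_0, \cdot), v(t_0, \cdot))$, Proposition \ref{comparison} provides an orbit $(\underline u, \underline v)$ issued from $(0,w)$ at time $t_0$, which is non-decreasing in $t$ and lies below $(u,v)$ for $t \ge t_0$.

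Both monotone bounded families converge pointwise, and standard parabolic $W^{2,p}_{loc}$ and Schauder estimates upgrade this to convergence locally in $C^{2,\alpha}$ to stationary solutions $(u^*, v^*)$ and $(u_*, v_*)$ of \eqref{stat}. The lower limit satisfies $(u_*, v_*) \ge (0, w) \not\equiv 0$, hence is non-null; the upper limit dominates the lower one, hence is also non-null and bounded. Theorem \ref{Liouville} then forces
\[
(u_*, v_*) = (u^*, v^*) = (\nu/\mu, 1),
\]
and the sandwich $(\underline u, \underline v) \le (u, v) \le (\bar u, \bar v)$ yields the claimed local $C^{2,\alpha}$ convergence.

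\medskip
\noindent\textbf{Expected main obstacle.} The delicate point is the construction of the lower barrier: one must produce a compactly supported stationary subsolution that both sits strictly inside $\Omega$ (so the road and exchange conditions are automatic) and is dominated by the possibly small positive function $v(t_0, \cdot)$. Strict inclusion of $\mathrm{supp}(w)$ in $\Omega$ is guaranteed by the uniform interior ball condition stemming from the regularity in Hypothesis \ref{gf}, while domination is achieved by shrinking $\eta$. Beyond that, the argument is a formal combination of the comparison principle and the Liouville-type theorem proved above.
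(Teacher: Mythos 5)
Your proposal is correct and follows essentially the same approach the paper indicates: squeeze the solution between a monotone-decreasing orbit issued from a large constant supersolution and a monotone-increasing orbit issued from a small compactly supported subsolution built from the principal Dirichlet eigenfunction on a slid copy of $S$, pass to stationary limits via parabolic estimates, and invoke Theorem \ref{Liouville} to identify both limits with $(\nu/\mu,1)$. The only points worth flagging are cosmetic: the lower barrier $(0,\eta\phi)$ extended by zero is only a \emph{generalized} subsolution (a kink across $\partial\mathbb{I}(S)$), so one should invoke Proposition \ref{gen} rather than Proposition \ref{comparison} there, and for the upper barrier one should note that $f$ is extended by $0$ on $[1,\infty)$ (as the paper does elsewhere) so that $f(M)\le 0$ makes sense for $M>1$.
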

 The proof is very classical and will not be repeated here. The idea is to take a  small compactly supported subsolution and a large supersolution of the stationary problem as initial data for the parabolic problem and to show that the solutions arising are respectively time-increasing and time-decreasing and converge locally $C^{2}$ to a stationary solution. Using the previous Liouville-type result, one gets the result.

Now that we know that solutions arising from initially compactly supported data converge to an equilibrium, the question that arises is to find the speed of convergence. As said in the introduction, the geometry of the domain is here very important, and we will consider asymptotically conical fields, as defined in Hypothesis \ref{acf}. In particular, we will show that, whatever the angle $\theta_{0}$ of the cone, the speed of propagation in the direction of the road is the same as in the flat-road case of \cite{BRR1}. Such a result was not obvious \emph{a priori}, as will be explained in subsection \ref{subsecreq}.

\section{Exactly Conical-shaped field}\label{conical}
In this section, we study the spreading speed in the direction of the road when the field is \emph{exactly conical}, as defined after Hypothesis \ref{acf}. The next section will be dedicated to the case of \emph{asymptotically conical} fields.

Our previous results (the Liouville-type result and the (generalized) comparison principle) apply to this situation, as mentioned before. In the rest of this section, the half-opening angle $\theta_{0}$, the real number $a$ and the function $\rho$, given by Hypothesis \ref{acf}, are fixed.

Also, from now on, we shall sometimes use polar coordinates $(r,\theta)$. The angle $\theta$ we consider is the angle with respect to the axis of the cone, positive on the right, i.e., $(x,y)=(r\sin(\theta),r\cos(\theta))$ (observe that this is not the usual orientation for polar coordinates, but this comes more handy in our situation).

\subsection{Remarks on supersolutions}\label{subsecreq}

Before we start, let us quickly recall the strategy used in \cite{BRR1} to find the spreading speed in the direction of the road when the road is a line. The authors seek solutions of the linearized system (hence supersolutions of the system because of the KPP property) of the form $(u(t,x),v(t,x,y)) := (e^{-\alpha(x-ct)},\gamma e^{-\alpha(x-ct)}e^{-\beta y})$ where $\alpha,\gamma>0$ and $\beta\in \mathbb{R}$. Then, the system \eqref{syst} boils down to the following algebraic system:
\begin{equation}\label{explic}
\left\{
\begin{array}{rlcl}
-D\alpha^{2}+c\alpha &= \nu \gamma-\mu \\
-d\alpha^{2}+c\alpha &= f^{\prime}(0) + d \beta^{2}\\
d\beta \gamma &= \mu -\nu \gamma.
\end{array}
\right.
\end{equation}
Let us here say a word, informally, about how such a system is solved, because we shall use the same kind of arguments in the following. One can see that the third equation gives $\gamma = \frac{\mu}{\nu+d \beta}$. The first equation has the roots:
\[
\alpha^{\pm}(c,\beta)=\frac{1}{2D}\left( c \pm \sqrt{c^{2}+\frac{4\mu d D \beta}{\nu+d \beta}}\right).
\]
For this expression to be real, $\beta$ should be larger than some value we shall call $\tilde{\beta}(c)$. Working in the $(\beta , \alpha)$ plane, $\alpha^{\pm}$ is the graph of a curve we call $\Gamma(c)$. The second equation, in the $(\beta,\alpha)$ plane, is the equation of a circle of centre $(0,\frac{c}{2d})$ and of radius $\frac{\sqrt{c^{2}-c_{KPP}^{2}}}{2d}$. Hence, solving \eqref{explic} is equivalent to finding the intersections of the curve $\Gamma(c)$ and of this circle.This is done in details in \cite[Section 5]{BRR1}. In subsections \ref{supersol} and \ref{supersol2}, to find supersolutions, we reduce our problem to finding solutions of an algebraic system that will be \eqref{explic} but with some perturbations.

The critical speed $c_{BRR}$ is then defined as the smallest $c \geq 0$ such that system \eqref{explic} has a solution $(\alpha,\beta,\gamma)$ with $\alpha,\gamma>0$ (see \cite[section 5]{BRR1} for the details). Let us call $(\alpha^{\star},\beta^{\star},\gamma^{\star})$ the triple $(\alpha,\beta,\gamma)$ associated to $c_{BRR}$. It is shown in \cite{BRR1} that $(\alpha^{\star},\beta^{\star})>(0,0)$. One may want to try to use these supersolutions here in order to get an upper bound for the spreading speed. For notational simplicity, we can rotate the coordinates so that the right road is the half-axis $(y=0,x\geq0)$. An easy computation shows that, for $t$ large enough, $(u(t,x),v(t,x,y)):=(\min(\frac{\nu}{\mu},e^{-\alpha^{\star}(x-c_{BRR}t)}),\min(1,\gamma^{\star} e^{-\alpha^{\star}(x-c_{BRR}t)}e^{-\beta^{\star} y}))$ is a generalized supersolution of the system \eqref{syst} if and only if $2\theta_{0} \geq \pi - \arctan(\frac{\beta^{\star}}{\alpha^{\star}})$. Indeed, the level lines of $e^{-\alpha^{\star}(x-c_{BRR}t)}e^{-\beta^{\star} y}$ are the lines of equation $-\alpha^{\star}x-\beta^{\star} y=-\alpha^{\star}c_{BRR}t+C$, for $C\in \mathbb{R}$, and in the case where $2\theta_{0} \geq \pi - \arctan(\frac{\beta^{\star}}{\alpha^{\star}})$, this level lines do not cross the left road, if $-C$ is large enough. Up to a time translation, we can then assume that $(u(t,x,y),v(t,x,y))=(\frac{\nu}{\mu},1)$ for all $t\geq0$ on the left road. Hence, $(u,v)$ is indeed a generalized supersolution.

This means that, when the angle $\theta_{0}$ is large enough, the speed in the direction of the road is less than $c_{BRR}$. One can then show, using a family of subsolutions constructed in \cite{BRR1}, that the speed in the direction of the road is in fact exactly equal to $c_{BRR}$ (the lower bound for spreading speed is established in Section \ref{subsol}).

One could then expect to see an acceleration when the angle $\theta_{0}$ is small enough, i.e., when $a$ is large enough (an acceleration of the spreading was observed in \cite{RTV} in a system with two parallels roads). 

The main result of this paper is that this is not the case : the spreading speed in the direction of the road is always $c_{BRR}$, independently of $\theta_{0}$. Moreover, we show that the speed of invasion is lower than $c_{BRR}$ in all the directions in the field. In the next subsection, we study the easier case where $D\leq2d$ (in this case, the diffusion on the road is ``slow" compared to the diffusion on the field, and hence the effect of the road will not be observed). This will be the occasion to introduce radial supersolutions. In the following subsection, we find supersolutions of the system in the more interesting case $ D> 2d$. Then, we shall see that subsolutions of the system with flat road (built in \cite{BRR1}) are again subsolutions in some sense. As already mentioned, the difficulties here come from the geometry of the domain. The results of this section are used in the next Section \ref{almostconical}, where we study asymptotically conical fields.

\subsection{Case $D\leq 2d$}\label{Dpetit}
We start with considering the easier case $D\leq2d$. In this case, the presence of the road has no influence on the speed of invasion in the field. To show this, we use radial supersolutions.

\begin{prop}
Let $\Omega$ be an exactly conical field, i.e., satisfying Hypothesis \ref{acf} with $\rho(x)=a\vert x \vert$ for $\vert x \vert \geq 1$. Assume that $D \leq 2d$. Then, for all directions in the cone, the spreading speed is the KPP speed $c_{KPP}=2\sqrt{df^{\prime}(0)}$. This spreading speed is understood in the following sense : if $(u,v)$ is the solution of \eqref{syst} arising from a non-negative compactly supported non-null initial datum, we have:
\[
\forall c>c_{KPP},
\sup_{\substack{(x,y) \in \Omega \\  \vert(x,y)\vert \geq ct}} v(t,x,y) \underset{t\to +\infty}{\longrightarrow}   0 , \sup_{\substack{(x,y) \in \partial\Omega \\ \vert(x,y)\vert \geq ct}} u(t,x,y) \underset{t\to +\infty}{\longrightarrow}  0
\]
and
\[
\forall  c<c_{KPP},
\inf_{\substack{(x,y) \in \Omega \\  \vert(x,y)\vert \leq ct}} v(t,x,y) \underset{t\to +\infty}{\longrightarrow}  1 , \inf_{\substack{(x,y) \in \partial\Omega \\ \vert(x,y)\vert \leq ct}} u(t,x,y) \underset{t\to +\infty}{\longrightarrow}  1.
\]
\end{prop}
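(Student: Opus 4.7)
The statement splits into an upper bound ($c > c_{KPP}$, $\sup \to 0$) and a lower bound ($c < c_{KPP}$, $\inf \to $ equilibrium). The key geometric observation specific to an \emph{exactly} conical field is that, for $|x| \geq 1$, the boundary $\partial\Omega$ consists of two straight rays emanating from the origin; along those rays the position vector $\hat{r}$ is tangent to $\partial\Omega$, so the outer normal $n$ is orthogonal to $\hat{r}$. Consequently $\partial_n V \equiv 0$ for every radial function $V$ on $\Omega$, which makes a radial ansatz automatically compatible with the coupling $d\partial_n v = \mu u - \nu v$.

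For the upper bound, I would build a radial generalized supersolution in polar coordinates $(r,\theta)$ centred at the apex, of the form
\[
V(t, r) \;=\; \min\!\bigl(1,\, A\, e^{-\alpha(r-ct)}\bigr), \qquad U(t, r) \;=\; \tfrac{\nu}{\mu}\, V(t, r)\big|_{\partial\Omega},
\]
with $A$ large enough to dominate the initial datum. On the exponentially decaying branch, using $\Delta V = (\alpha^{2} - \alpha/r)V$ and $f(V) \leq f'(0) V$, the field inequality $\partial_t V - d\Delta V \geq f(V)$ reduces to
\[
c\alpha - d\alpha^{2} + \tfrac{d\alpha}{r} \;\geq\; f'(0),
\]
which holds for every $r > 0$ as soon as $\alpha \in [\alpha_{-}, \alpha_{+}]$, with $\alpha_{\pm} = (c \pm \sqrt{c^{2} - c_{KPP}^{2}})/(2d)$ (the term $d\alpha/r$ only helps). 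Along each road ray, arc-length $s$ differs from $r$ by an additive constant, so $\partial_{ss}^{2} U = \partial_{rr}^{2} U$, and the identity $\partial_n V = 0$ reduces the road condition $\partial_t U - D\partial_{ss}^{2} U \geq \nu V - \mu U$ to $c\alpha - D\alpha^{2} \geq 0$, i.e.\ $\alpha \leq c/D$. Under the hypothesis $D \leq 2d$ one has $\alpha_{-} < c/(2d) \leq c/D$, so a value $\alpha$ slightly larger than $\alpha_{-}$ satisfies every constraint. The cap at $1$ handles the smoothed apex region $|x| < 1$ (where $\hat{r}\cdot n$ need not vanish) and the region $\{A e^{-\alpha(r-ct)} \geq 1\}$; the gluing is achieved via the generalized comparison framework of Proposition \ref{gen} applied to supersolutions. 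Then Proposition \ref{comparison} yields $v \leq V$ and $u \leq U$, whence $\sup_{|(x,y)| \geq ct} v$ and $\sup_{|(x,y)| \geq ct} u$ both tend to $0$.

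For the lower bound, the starting point is Proposition \ref{inv}: at some time $T_{0}$, both $v(T_{0}, \cdot)$ and $u(T_{0}, \cdot)$ exceed any prescribed value below their equilibrium on an arbitrarily large compact subset of $\Omega$ (resp.\ $\partial\Omega$). From here I would run a standard Aronson--Weinberger iteration to propagate invasion at speed at least $c_{KPP}$ in every direction inside the cone: since $\Omega$ widens linearly, for any direction $\vec{e}$ in the open cone one can fit balls of any fixed radius $R$ centred along $t\vec{e}$ inside $\Omega$ for all large $t$. Inside such balls $v$ satisfies the \emph{pure} Fisher--KPP equation, so iterated comparison with translates of a compactly supported stationary subsolution of $-d\Delta\psi = f(\psi)$ gives $v(t, ct\vec{e}) \to 1$ for every $c < c_{KPP}$. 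Uniformity in the direction and the analogous bound for $u$ on the road follow from the same iteration combined with the exchange condition at $\partial\Omega$.

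The main obstacle is conceptual rather than technical: exhibiting a global supersolution at all in a non-flat geometry. The tangency of $\hat{r}$ to $\partial\Omega$ along the conical rays is precisely what turns a radial ansatz into an admissible supersolution, and the constraint $D \leq 2d$ is the exact algebraic condition that reconciles the KPP range $\alpha \in [\alpha_{-}, \alpha_{+}]$ with the road-imposed upper bound $\alpha \leq c/D$. The complementary regime $D > 2d$ cannot be handled by a purely radial supersolution and is the object of the next subsection.
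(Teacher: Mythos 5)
Your upper bound is essentially the paper's proof: both use the radial ansatz $(\overline u,\overline v)=(\min(\tfrac{\nu}{\mu},e^{-\alpha(r-ct)}),\min(1,\gamma e^{-\alpha(r-ct)}))$ in polar coordinates centred at the apex, both rest on the observation that a radial function has $\partial_n V = 0$ on the straight conical rays and that $\partial_{ss}^2=\partial_{rr}^2$ there, both reduce to the algebraic conditions $c\alpha-d\alpha^2+d\alpha/r\geq f'(0)$ and $\alpha\leq c/D$, and both close the argument by noting that $D\leq 2d$ puts $\alpha_{-}=(c-\sqrt{c^2-c_{KPP}^2})/(2d)$ below $c/D$. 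Your extra normalization constant $A$ is a harmless variant. So for the upper bound you have independently reconstructed the paper's argument.

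The lower bound is where you have a genuine gap, in two places. First, you propose ``iterated comparison with translates of a compactly supported \emph{stationary} subsolution of $-d\Delta\psi=f(\psi)$''; this is the object that proves the hair-trigger effect, but it does not by itself recover the sharp speed $c_{KPP}$. To get the correct speed one needs a \emph{moving} subsolution with the exponential tilt built in, which is what the paper uses:
\[
\underline v_{c,e}(t,x,y)=\epsilon\,\phi_R\bigl((x,y)-cte-(0,\tau)\bigr)\,e^{-\frac{c}{2d}\left((x,y)\cdot e-ct\right)},\qquad \underline u_{c,e}\equiv 0,
\]
where $\phi_R$ is the Dirichlet principal eigenfunction on $B_R$ and $\tau$ is fixed so that the moving ball stays inside $\Omega$. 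The computation $\partial_t w-d\Delta w=(\tfrac{c^2}{4d}+d\lambda_R)w$ for $w=\phi_R(\cdot-cte)e^{-\frac{c}{2d}(x\cdot e-ct)}$ shows that this is a subsolution of the linearized field equation precisely when $c^2<4d(f'(0)-d\lambda_R)$, which for $R$ large covers every $c<c_{KPP}$; without the exponential factor the threshold speed you obtain is not $c_{KPP}$. Second, even after this subsolution gives $\inf_{|(x,y)|\leq \overline c t}v(t,\cdot+\tau e_y)\geq\epsilon$, that only shows the infimum is bounded away from $0$, not that it converges to $1$. The paper closes this by a compactness argument: along a minimizing sequence $t_n,(x_n,y_n)$ one extracts an entire solution $v_\infty\geq\epsilon$ on a fixed ball, compares it from below with the solution issued from $(0,\epsilon\mathbbm{1}_{B_\delta})$ on the limiting domain (a half-plane or the whole plane), and then invokes the invasion Proposition \ref{inv} to force $v_\infty\equiv 1$, contradicting $v_\infty(0,0)=a<1$. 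The same extraction is then used for $u$, yielding the constant $\nu/\mu$ as the limit of the entire solution on the road; your remark that the bound for $u$ ``follows from the same iteration combined with the exchange condition'' glosses over exactly this step. As written, your lower bound argument needs both the exponential moving subsolution and the extraction/Liouville step to be complete.
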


\begin{proof}
First of all, to prove the upper bound, define, for $\alpha,\gamma$ positive to be chosen after, $(u(t,r,\theta),v(t,r,\theta)):=(\min(\frac{\nu}{\mu},e^{-\alpha(r-ct)}),\min(1,\gamma e^{-\alpha(r-ct)}))$ where $(r,\theta)$ are the polar coordinates previously defined. For this couple to be a supersolution of \eqref{syst}, it is easy to see that it is sufficient to have:
\begin{equation*}
\left\{
\begin{array}{rlcr}
\alpha c -D\alpha^{2} &\geq \nu \gamma-\mu u  \\
\alpha c -d(\alpha^{2}-\frac{\alpha}{r}) &\geq f^{\prime}(0) & , & \forall r\geq 1 \\
0 &\geq \mu -\nu \gamma.
\end{array}
\right.
\end{equation*}

The condition $r \geq 1$ in the second equation comes from the fact that, up to a time translation, we can assume $(u,v)=(\frac{\nu}{\mu},1)$ if $r \leq 1$. Then, we can use the generalized comparison principle we proved earlier.
Let us take $\gamma = \frac{\mu}{\nu}$. The above algebraic system admits solutions $\alpha,c,\gamma>0$ if $c^{2}\geq4df^{\prime}(0)$ and if: 

\[
\frac{c+\sqrt{c^{2}-4df^{\prime}(0)}}{2d}\leq\frac{c}{D}.
\]

Because $D \leq2d$, we can choose $c=2\sqrt{df^{\prime}(0)}$, hence the first part of the result is given by comparison with $(u,v)$, indeed, if $\overline{c}>c$, we have : $\sup_{\substack{ \vert (x,y) \vert \geq \overline{c} t \\ (x,y) \in \partial \Omega }} u \underset{t \to +\infty}{\longrightarrow}  0$ and $\sup_{\substack{\vert (x,y) \vert \geq \overline{c} t \\ (x,y)\in \Omega}} v \underset{t \to +\infty}{\longrightarrow}~0 $.

For the lower bound, we proceed by contradiction. Let us take $\underline{c},\overline{c} \geq 0$, such that $\underline{c}<\overline{c}<2 \sqrt{df^{\prime}(0)}$. Assume that we have two sequences $t_{n},(x_{n},y_{n})$ such that $\vert (x_{n},y_{n}) \vert \leq \underline{c}t_{n} < \overline{c}t_{n}$ and $t_{n} \to +\infty $ and $v(t_{n},x_{n},y_{n})\to a$, for some $a<1$. We can define the sequences $v_{n}(t,x,y):=v(t+t_{n},x+x_{n},y+y_{n})$ and $u_{n}(t,x,y):=u(t+t_{n},x+x_{n},y+y_{n})$, defined respectively on $\Omega-(x_{n},y_{n})$ and $\partial\Omega-(x_{n},y_{n})$. Let us take $R>0$ large enough so that the following principal eigenvalue problem
\begin{equation*}
\left\{
\begin{array}{rlcl}
-\Delta \phi &= \lambda_{R}\phi \quad &\text{on} \quad B_{R}(0,0) \\
\phi &= 0  \quad &\text{on} \quad \partial B_{R}(0,0) \\
\phi&>0 \quad &\text{on} \quad B_{R}(0,0) \\
\| \phi \|_{L^{\infty}} &= 1.
\end{array}
\right.
\end{equation*}
has a solution with $ \lambda_{R}>0$ small enough, to be chosen after.

Now, for every $c\geq 0$ such that $c\leq \overline{c}$ and for $e=(e_{1},e_{2})\in \mathbb{S}^{1}$ in the cone, define:

 \[
 \left(\underline{u}_{c,e}(t,x,y),\underline{v}_{c,e}(t,x,y)\right):=\left(0,\epsilon \phi(x-cte_{1},y-cte_{2}-\tau) e^{-\frac{c}{2d}((x,y)\cdot e-ct)}\right),
 \]
 where $\tau \in \mathbb{R}^{+}$, depending only on $R$, is such that $B_{R}(0,\tau)\subset\Omega$ (hence $\underline{v}_{c,e}$ is well defined). Using the KPP hypotheses, it is classical to see that there is $\epsilon>0$, depending only on $f$ and $d$, such that these are subsolutions of the system. Now, taking $\epsilon$ smaller if needed, and because of the strong maximum principle that guaranties $v(1,x,y)>0$ for all $(x,y) \in \Omega$, we can assume that we have: 
 \[
 \epsilon \leq e^{-\frac{\overline{c}}{2d}(R+ \tau  )}\inf_{\vert (x,y)\vert \leq R +  \tau  }v(1,x,y).
 \]
This implies that, for all $c\leq \overline{c}$, for all $e$ in the cone: 
\[
(\underline{u}_{c,e}(1,x,y),\underline{v}_{c,e}(1,x,y))\leq(u(1,x,y),v(1,x,y)),
\]
and hence, by our maximum principle that: 
\[
(\underline{u}_{c,e}(t,x,y),\underline{v}_{c,e}(t,x,y))\leq(u(t,x,y),v(t,x,y)),
\]
for all $t\geq 1$. Therefore, for all $e$ in the cone, for all $c\leq \overline{c}$, we have 
\[
\epsilon \phi(x-cte_{1},y-cte_{2}-\tau) e^{-\frac{c}{2d}((x,y)\cdot e -ct)}\leq v(t,x,y).
\]
 This implies that $\inf_{\vert (x,y)\vert \leq \overline{c}t}v(t,x,y+\tau) \geq \epsilon$. Because $\underline{c}<\overline{c}$, we have that, for all $t>-t_{n}$, for all $(x,y) \in \Omega-(x_{n},y_{n})$, $v_{n}(t,x,y+\tau)\geq \epsilon$ if $n$ is large enough. Hence, using parabolic estimates, the sequence $v_{n}$ converge to an entire solution, name it $v_{\infty}$. It is solution on a half plane if $d((x_{n},y_{n}),\partial \Omega)$ is bounded or on a whole plane if not. Let us now show that $v_{\infty}=1$. We will write the end of the proof so that we do not need to differentiate those cases.

 There is a ball $B_{\delta} \subset \Omega$ such that, for all $t\in \mathbb{R}$, $v_{\infty}(t,x,y)\geq \epsilon$ on $B_{\delta}$. Therefore, if $m \in \mathbb{N}$, we have $v_{\infty}(-m,x,y)\geq \epsilon 1_{B_{\delta}}(x,y)$. If $(\tilde{u},\tilde{v})$ is the solution of \eqref{syst} arising from the initial datum $(0,\epsilon 1_{B_{\delta}}(x,y))$, we have that $v_{\infty}(0,x,y)\geq\tilde{v}(m,x,y)$ for all $(x,y) \in \Omega$. Now, letting $m\to \infty$ and using the invasion property Proposition \ref{inv}, we have that $v_{\infty}(0,x,y)=1$ for all $(x,y)\in \Omega$, which is impossible because $v_{\infty}(0,0)=a<1$. We have then reached a contradiction. Hence, if $c<c_{KPP}$, $\inf_{\substack{(x,y) \in \Omega \\  \vert(x,y)\vert \leq ct}} v(t,x,y) \underset{t\to \infty}{\longrightarrow}  1$. The same holds for $u$, to see that, it suffices to consider a sequence $t_{n}\to \infty$ and $(x_{n},y_{n})\in \partial \Omega$ such that $u(t_{n},x_{n},y_{n})\to \lim_{t\to \infty} \inf_{\vert (x,y)\vert\leq ct}u$. Considering $u_{n}$ and $v_{n}$ defined as before, we have that $(u_{n},v_{n}) \to (u_{\infty},1)$, this limiting couple being a solution of the system \eqref{syst} with $\Omega$ being a half-plane. Necessary, $u_{\infty}=\frac{\nu}{\mu}$, which means that $\inf_{\substack{(x,y) \in \partial\Omega \\  \vert(x,y)\vert \leq ct}} u(t,x,y) \underset{t\to \infty}{\longrightarrow}  \frac{\nu}{\mu}$, hence the result.
\end{proof}

\subsection{Case $D \geq 2d$. Supersolutions and upper bound for the speed in all directions}\label{supersol}

We show here that the speed of invasion is lower than $c_{BRR}$ in each direction in the field (we recall that $c_{BRR}$ is the speed in the direction of the road in the case where the road is a straight line, computed in \cite{BRR1}).

\begin{prop}\label{th1}
Let $\Omega$ be an exactly conical field, i.e., satisfying Hypothesis \ref{acf} with $\rho(x)=a\vert x \vert$ for $\vert x \vert \geq 1$. Let $(u,v)$ be the solution of the field-road system \eqref{syst} with field $\Omega$ arising from a  non-negative compactly supported initial datum $(u_{0},v_{0})$, then:
\[
 \forall c>c_{BRR}, \, \sup_{\substack{ \vert (x,y) \vert \geq c t \\ (x,y) \in \partial \Omega }} u(t,x,y) \underset{t \to +\infty}{\longrightarrow}  0 \quad , \quad  \sup_{\substack{\vert (x,y) \vert \geq c t \\ (x,y)\in \Omega}} v(t,x,y) \underset{t \to+ \infty}{\longrightarrow} 0. 
 \]

\end{prop}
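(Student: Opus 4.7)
The strategy is to construct an almost-radial generalized supersolution of \eqref{syst} on the exactly conical field $\Omega$, and then to invoke the generalized comparison principle. Fix any $c > c_{BRR}$. As recalled in Section~\ref{subsecreq}, the flat-road analysis of \cite{BRR1} provides a triple $(\alpha,\beta,\gamma)\in(0,\infty)^3$ such that $(e^{-\alpha(x-ct)},\gamma e^{-\alpha(x-ct)-\beta y})$ is a classical supersolution of the linearized field-road system on the half-plane $\{y\ge0\}$ with road $\{y=0\}$. I rotate this planar ansatz so that the right branch of the conical road plays the role of the positive $x$-axis and the inward normal to $\Omega$ on that branch plays the role of the positive $y$-axis, obtaining a smooth couple $(u_R,v_R)$ defined on all of $\mathbb{R}^{2}$ by its Cartesian formula; symmetrically, reflecting across the axis of the cone, one defines $(u_L,v_L)$. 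Each such couple is a classical supersolution of the bulk KPP equation on $\Omega$, and the BRR1 boundary coupling $d\partial_n v_\bullet=\mu u_\bullet-\nu v_\bullet$ holds on the corresponding branch of $\partial\Omega$.

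I then glue the two candidates by setting
\[
\bar u(t,x,y):=\min\bigl\{u_R,u_L,\nu/\mu\bigr\},\qquad \bar v(t,x,y):=\min\bigl\{v_R,v_L,1\bigr\},
\]
where the caps at the stationary level $(\nu/\mu,1)$ (which is itself a steady state of \eqref{syst}) absorb the exponential blow-up of $u_R,u_L,v_R,v_L$ near the vertex of the cone and at small times. By the reflection symmetry of the construction, $v_R$ coincides with $v_L$ on the axis of the cone, and in the right (resp.\ left) half of $\Omega$ the infimum of $v_R,v_L$ is attained by $v_R$ (resp.\ $v_L$). Consequently, on each branch of $\partial\Omega$ the pair $(\bar u,\bar v)$ agrees pointwise with the corresponding rotated ansatz and inherits the BRR1 boundary coupling. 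Away from the axis $\bar u$ and $\bar v$ are smooth; at the axis, the cap transitions and the vertex they are the componentwise minimum of classical supersolutions and of the stationary solution, so the supersolution analogue of Proposition~\ref{gen} (``min of supersolutions is a generalized supersolution'') applies and shows that $(\bar u,\bar v)$ is a generalized supersolution of \eqref{syst} on $\Omega$.

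To conclude, since $(u(1,\cdot),v(1,\cdot))$ is bounded by parabolic regularity and $\bar u,\bar v$ saturate at their caps on any compact set after sufficient time, one picks $t_0$ large enough that $(\bar u(t_0,\cdot),\bar v(t_0,\cdot))\ge(u(1,\cdot),v(1,\cdot))$ on $\overline{\Omega}$. Applying Proposition~\ref{comparison} then yields $(u(1+t,\cdot),v(1+t,\cdot))\le(\bar u(t_0+t,\cdot),\bar v(t_0+t,\cdot))$ for every $t>0$. For any $c'>c_{BRR}$, choosing $c\in(c_{BRR},c')$ in the construction, each rotated exponential decays at rate at least a positive multiple of $(c'-c)$ in the region $\{|(x,y)|\ge c't\}$ on the corresponding half of $\Omega$, and by the symmetry of the min, at each point of $\Omega$ at least one of $v_R,v_L$ carries such a decay, giving $\bar u,\bar v\to 0$ there uniformly as $t\to+\infty$. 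The main obstacle is to verify rigorously that $\min(v_R,v_L)$ is a generalized supersolution across the axis of symmetry and the vertex, together with a quantitative, uniform-in-angle decay estimate in the far field; additional care is needed for wide cones, $\theta_0>\pi/2$, where the rotated ansatzes must be interpreted as analytic extensions beyond their associated half-planes and the direction in which each individual $v_\bullet$ decays must be tracked using the geometry determined by $(\alpha,\beta)$.
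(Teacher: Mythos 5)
Your construction fails precisely in the case the proposition is designed to handle: narrow cones. The claim ``in the right (resp.\ left) half of $\Omega$ the infimum of $v_R,v_L$ is attained by $v_R$ (resp.\ $v_L$)'' is false when $\theta_0$ is small. In the paper's coordinates $(x,y)=(r\sin\theta,r\cos\theta)$ with $\rho(x)=a|x|$, write the rotated right-branch ansatz as $v_R(x,y)=\gamma\exp\bigl(-\alpha\frac{x+ay}{\sqrt{1+a^2}}+\alpha ct-\beta\frac{-ax+y}{\sqrt{1+a^2}}\bigr)$ and symmetrically $v_L$. Then
\[
\log v_R-\log v_L=\frac{2x\,(a\beta-\alpha)}{\sqrt{1+a^2}},
\]
so for $a\beta>\alpha$ (equivalently $\theta_0=\arctan(1/a)<\arctan(\beta/\alpha)$) one has $v_R>v_L$ in the right half and $v_R<v_L$ in the left half -- the opposite of what you assert. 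Consequently, along the left branch the minimum in $\bar v$ is eventually attained by $v_R$ (not $v_L$), while $\bar u$ is attained by the slower-decaying $u_L=e^{-\alpha(r-ct)}$. There $d\partial_n v_R$ is a bounded multiple of $v_R$ (one computes $\partial_n\log v_R=\frac{2\alpha a-\beta(a^2-1)}{1+a^2}$ on the left branch), yet $v_R$ decays in $r$ at rate $\frac{\alpha(a^2-1)+2a\beta}{1+a^2}>\alpha$, strictly faster than $u_L$. Hence $d\partial_n \bar v+\nu\bar v=O(v_R)\ll\mu u_L=\mu\bar u$ far along the left branch, and the Robin inequality $d\partial_n\bar v\ge\mu\bar u-\nu\bar v$ is violated. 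In short, the rotated planar ansatz carries the level sets of $e^{-\beta y'}$ across the vertex into the other branch, where the boundary coupling has the wrong sign; taking the min of the two rotated copies does not fix this because, for narrow cones, the min picks the copy adapted to the \emph{opposite} branch. Your concluding caveat worries about wide cones ($\theta_0>\pi/2$), but that is exactly the benign regime discussed in subsection \ref{subsecreq}; the genuine obstruction is the narrow cone ($a$ large), which is also where one might a priori fear acceleration from the two nearby roads.

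This is why the paper abandons rotated planar supersolutions altogether and instead builds the almost-radial supersolution $\bigl(e^{-\alpha(r-ct)},\gamma\Psi(r,\theta)e^{-\alpha(r-ct)}\bigr)$, where the radial decay rate $\alpha$ is the same in every direction and the angular correction $\Psi$, equal to $1$ on $\partial\Omega$ and bounded below by $\eta/(1+\eta)$, supplies a boundary layer that reproduces the $\beta$-behaviour only near the road. The parameters $(\alpha,\beta)$ are then taken from a small perturbation \eqref{perturbed} of the BRR algebraic system, solvable for any $c>c_{BRR}$ after adjusting $\eta$, independently of $\theta_0$. You would need to replace the min-of-two-rotations idea by a construction of this type (or show that the minimum can be taken over a \emph{continuum} of rotations and verify the boundary inequality uniformly, which is essentially what $\Psi$ encodes) before the comparison step can be applied for all opening angles.
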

If the road is a straight line, this estimation is optimal only in the direction of the road, c.f. \cite{BRR3}. The idea of the proof is to build almost radial supersolutions, as in the case $D\leq 2d$, whose levels sets are contained between two balls of radius growing as $ct$, for every $c>c_{BRR}$.

\begin{proof}
We will use again polar coordinates. Recall that our polar coordinates are $(r,\theta)\in \mathbb{R}^{+}\times [-\theta_{0},\theta_{0}]$, where $x=r\sin(\theta),y=r \cos(\theta)$. Let us fix $c>c_{BRR}$. We will construct supersolutions of the form: 

\[
\left\{
\begin{array}{l}
\overline{u}(t,r,\theta):=\min \left\{ \frac{\nu}{\mu}A,e^{-\alpha ( r-ct )} \right\}  \\
\overline{v}(t,r,\theta):=\min \left\{ A,\gamma \Psi (r,\theta) e^{-\alpha( r-ct )} \right\},
\end{array}
\right.
\]
where $A\geq\max\left\{1,\frac{\mu}{\nu}\sup u_{0}, \sup v_{0}\right\}$. We will assume that $f(v)$ is extended by zero for $v\geq 1$, which makes $(\frac{\nu}{\mu}A,A)$ a stationary solution of the system \eqref{syst}. The core of the proof is to build $\Psi$. It will be chosen even (with respect to $\theta$) and such that $\varepsilon \leq \Psi \leq 1$ for some $\varepsilon>0$ (the lower bound is necessary so that we could place compactly supported initial data under $\overline{v}$. The upper bound is just here for normalization). Observe also that, because of our generalized comparison principle Proposition \ref{gen}, we just need to check that $\left(e^{-\alpha ( r-ct )},\gamma \Psi (r,\theta) e^{-\alpha( r-ct )}\right)$ is a supersolution of the system \eqref{syst} for $(t,r,\theta)$ such that $e^{-\alpha ( r-ct )}\leq \frac{\nu}{\mu}$ or $\gamma \Psi (r,\theta) e^{-\alpha( r-ct )}\leq 1$. Up to translation, the points where this happens can be chosen far from the origin. Actually, it is sufficient to show that $\left(e^{-\alpha ( r-ct )},\gamma \Psi (r,\theta) e^{-\alpha( r-ct )}\right)$ is a supersolution of \eqref{syst} for $r\geq R$, for $R>0$ as large as we want. We will choose $\gamma,  \Psi,  \alpha $, with $\alpha>0, \gamma>0$ such that this is true.

Let us implement $\left(e^{-\alpha ( r-ct )},\gamma \Psi (r,\theta) e^{-\alpha( r-ct )}\right)$ in the system \eqref{syst} with field $\Omega$, for $r\geq R$, $R$ to be chosen after. Using the local representation of the Laplace-Beltrami operator and the expression of the gradient in polar coordinates, we see that these functions will be supersolutions if:

\begin{equation}\label{systpsi}
\left\{
\begin{array}{lllll}
\alpha c -D \alpha^{2} &\geq& \nu \gamma \Psi- \mu \quad &\text{on}& \quad \partial \Omega \cap\{R \leq r \} \\
\alpha c -d \frac{\tilde{\Delta }\Psi}{\Psi} &\geq& f^{\prime}(0) \quad &\text{on}& \quad  \Omega \cap\{R \leq r \}  \\
d\gamma \frac{1}{r} \partial_{\pm\theta}\Psi &\geq& \mu -\nu  \gamma \Psi \quad &\text{on}& \quad \partial \Omega \cap\{R \leq r \},
\end{array}
\right.
\end{equation}
where we write $\tilde{\Delta }\Psi :=e^{\alpha (r-ct)}\Delta e^{-\alpha (r-ct)}\Psi$.

Because the Laplace operator rewrites, in polar coordinates  $\partial^{2}_{rr}+\frac{1}{r}\partial_{r}+\frac{1}{r^2}\partial_{\theta\theta}$, we have:
\[
\tilde{\Delta }\Psi=\partial^{2}_{rr}\Psi-2\alpha\partial_{r}\Psi+\alpha^{2}\Psi+\frac{1}{r}\partial_{r}\Psi-\frac{\alpha}{r}\Psi+\frac{1}{r^2}\partial_{\theta\theta}\Psi.
\]
Now, we will state what conditions we want to impose on $\Psi$ and then see that these conditions imply \eqref{systpsi}, at least if $R$ is large enough. We will construct $\Psi$ just after.

Let us fix $0 < \eta <1$, $\alpha>0$ and $\beta \in \mathbb{R}$. We claim that there is $\Psi $ such that:

\begin{equation}
\left\{
\begin{array}{l}\label{condpsi}
\Psi \quad \text{is even w.r.t } \theta \\
\Psi(r,\pm\theta_{0})=1,\quad \forall r \geq 0 \\
0<  \frac{\eta}{1+\eta} \leq \Psi \leq 1 \\
\frac{\tilde{\Delta }\Psi}{\Psi} \leq \beta^{2} + \alpha^{2}  \, \text{ for } \,  r \geq R\\
\frac{1}{r} \partial_{\pm\theta}\Psi(r,\pm\theta_{0}) \geq \frac{\beta}{\eta+1}.
\end{array}
\right.
\end{equation}
If these conditions are satisfied, it is then sufficient for \eqref{systpsi} to hold that the following algebraic conditions on $\alpha,\beta,\gamma$ are verified:

\begin{equation}\label{perturbed}
\left\{
\begin{array}{l}
\alpha c -D \alpha^{2} \geq \nu \gamma - \mu\\
\alpha c  -d \alpha^{2} \geq f^{\prime}(0)+d\beta^{2}  \\
\frac{d \gamma \beta}{\eta+1}\geq \mu -\nu \gamma.
\end{array}
\right.
\end{equation}
However, as we mentioned in subsection \ref{subsecreq}, in \cite[section 5]{BRR1}, the authors showed that, when $c\geq c_{BRR}$,
\[
\left\{
\begin{array}{l}
\alpha c -D \alpha^{2} = \nu \gamma - \mu\\
\alpha c  -d \alpha^{2} = f^{\prime}(0)+d\beta^{2}   \\
d \gamma \beta = \mu -\nu \gamma
\end{array}
\right.
\]
has a solution $( \alpha^{\star},\beta^{\star})$, with $\alpha^{\star}>0$, $\beta^{\star}>0$. Using the same techniques we have that there is a solution $(\alpha,\beta)$ with $\alpha>0$, $\beta >0$ if $c>c_{BRR}$ and $ \eta $ is small enough. Moreover, $\epsilon>0$ being given, we can take $\eta$ small enough so that $\vert \alpha - \alpha^{\star}\vert \leq \epsilon$ and $\vert \beta - \beta^{\star} \vert \leq \epsilon$. Indeed, this comes from the fact that the algebraic problem \eqref{perturbed} is equivalent to finding the intersection in the $(\beta , \alpha)$ plane of 
\[
\alpha(\eta , c,\beta)^{\pm}:=\frac{c \pm \sqrt{c^{2}+\frac{4\mu D d\beta}{\nu(1+\eta)+d\beta}}}{2D}
\]
and of a circle of equation $(\alpha+\frac{c}{2d})^{2}+\beta^{2}=\frac{c^{2}-4df^{\prime}(0)}{4d^{2}}$. But when $\eta$ is small enough, the graph of $\alpha(\eta , c,\beta)^{\pm}$ is as close as we want to the graph of $\alpha(0,c,\beta)^{\pm}$ locally uniformly on $\beta$. This allow us to conclude. For the full details about the geometric interpretation of this system, see \cite[Section 5]{BRR1} where it is treated in details.

From now on, we suppose we have chosen $c>c_{BRR}$ and $(\alpha,\beta,\gamma,\eta)$ this way to be solutions of \eqref{perturbed}.
 Now, let us see how to construct $\Psi$ satisfying the conditions \eqref{condpsi}. We will take it of the form:
\[
\Psi(r,\theta)=\frac{1}{1+\eta}\left( \phi \left( \frac{\sqrt{R}}{\theta_{0}}(\vert \theta \vert - \theta_{0})+1 \right) e^{\beta r (\vert \theta \vert - \theta_{0})}+\eta \right),
\]
where $\phi$ is a $C^{\infty}$ function such that $\phi(x) = 0$ if $x<0$, $\phi(x) =1 $ if $x>1$, and $\phi$ strictly increasing in between. We can motivate our choice by observing that, when $\theta$ is close to $\theta_{0}$, then $e^{\beta r (\vert \theta \vert - \theta_{0})}$ behaves like $e^{\beta (x,y)\cdot n}$, where $n$ is the outer normal to the cone. this means that in some sense, up to rotation, near the road, our function $ \Psi (r,\theta) e^{-\alpha( r-ct )}$ behaves like the supersolution $e^{-\alpha(x-ct)}e^{-\beta y}$ of the flat-road case.

Let us now check that $\Psi$ meets the previously announced expectations. First, it is obvious that:
\[
\left\{
\begin{array}{l}
\Psi \quad \text{is even w.r.t } \theta \\
\Psi(r,\pm\theta_{0})=1,\quad \forall r \geq 0 \\
0< \frac{\eta}{1+\eta} \leq \Psi \leq 1. \\
\end{array}
\right.
\]
Let us compute $ \partial_{\theta}\Psi(r,\theta_{0}) $:
\[
\partial_{\theta}\Psi(r,\theta_{0}) = \frac{1}{1+\eta}\Big( \frac{\sqrt{R}}{\theta_{0}} \phi^{\prime}(1)+ \beta r \Big).
\]
Therefore, the condition
\[
\frac{1}{r} \partial_{\theta}\Psi(r,\theta_{0}) \geq \frac{\beta}{\eta+1}
\]
is verified.

We now need to check the last condition, i.e., for $r\geq R$:
\[
\partial^{2}_{rr}\Psi-2\alpha\partial_{r}\Psi+\alpha^{2}\Psi+\frac{1}{r}\partial_{r}\Psi-\frac{\alpha}{r}\Psi+\frac{1}{r^2}\partial_{\theta\theta}\Psi \leq (\beta^{2} + \alpha^{2} )\Psi.
\]
Because $\alpha>0$, $ \Psi \geq 0$ and $\partial_{r}\Psi\leq0$, it is sufficient to check: 

\[
\partial^{2}_{rr}\Psi-2\alpha\partial_{r}\Psi+\frac{1}{r^2}\partial_{\theta\theta}\Psi \leq \beta^{2} \Psi.
\]
This rewrites:
\[
\phi\beta^{2}(\vert \theta \vert -\theta_{0})^{2}-2\alpha\phi \beta (\vert \theta \vert - \theta_{0})+\frac{1}{r^2}\Big(\phi \beta^{2}r^{2} + 2\beta r \phi^{\prime}\frac{\sqrt{R}}{\theta_{0}}+\phi^{\prime\prime}\frac{R}{\theta_{0}^{2}}\Big) \leq \beta^{2} (\phi + \eta e^{-\beta r (\vert \theta \vert - \theta_{0})}).
\]
It is sufficient to have (remember that we assume $R \leq r$):
\[
 \vert \phi   (\vert \theta \vert - \theta_{0}) \vert \Big( \beta^{2}\theta_{0}+2\alpha \beta\Big)+\Big( 2\beta  \vert \phi^{\prime} \vert \frac{1}{\sqrt{R} \theta_{0}}+\vert \phi^{\prime\prime} \vert \frac{1}{R\theta_{0}^{2}}\Big) \leq \beta^{2} \eta e^{-\beta r (\vert \theta \vert - \theta_{0})}.
\]
Observe that $\beta^{2} \eta e^{-\beta r (\vert \theta \vert - \theta_{0})} \geq \beta^{2}\eta$.

Moreover, we can see that $ \phi \left( \frac{\sqrt{R}}{\theta_{0}}(\vert \theta \vert - \theta_{0})+1 \right)$ is equal to zero for $\vert \theta \vert \leq \theta_{0}(1-\frac{1}{\sqrt{R}})$. From that, it is easy to see that $\phi\left(\frac{\sqrt{R}}{\theta_{0}}(\vert \theta \vert - \theta_{0})+1\right)(\vert \theta \vert -\theta_{0})$ goes to zero as $R$ goes to infinity, uniformly in $\theta$. Therefore $ \phi  \vert (\vert \theta \vert - \theta_{0}) \vert ( \beta^{2}\theta_{0}+2\alpha \beta)\leq \frac{1}{2}\beta^{2}\eta  $ if $R$ is large enough (depending only on $\phi, \alpha^{\star},\beta^{\star} , \eta$). To conclude, we can take $R$ large enough such that:

\[
2\beta  \vert \phi^{\prime} \vert \frac{1}{\sqrt{R} \theta_{0}}+\vert \phi^{\prime\prime} \vert \frac{1}{R\theta_{0}^{2}} \leq \frac{1}{2} \beta^{2} \eta. 
\]
Then, the last condition is verified.

We have then built a (generalized) supersolution $(\overline{u},\overline{v})$ to our system. The result then follows : chose $A\geq 1$ such that $(u_{0},v_{0})< (A\frac{\nu}{\mu},A)$ is a compactly supported initial datum, then (up a translation in time for $(\overline{u},\overline{v})$), we have $(u_{0},v_{0}) \leq  (\overline{u},\overline{v})$, and the result follows by the generalized comparison theorem because $\sup_{\substack{ \vert (x,y) \vert \geq c t \\ (x,y) \in \partial \Omega }} \overline{u} \underset{t \to +\infty}{\longrightarrow}  0$ and $\sup_{\substack{\vert (x,y) \vert \geq c t \\ (x,y)\in \Omega}} \overline{v} \underset{t \to +\infty}{\longrightarrow}~0 $.
\end{proof}

Now, we investigate the existence of subsolutions, in order to get a lower bound for the spreading speed.

\subsection{Subsolutions and lower bound on the speed}\label{subsol}

The previous subsection \ref{supersol} gives us an upper bound for the spreading speed. In this subsection and in the next one, we give a lower bound. The standard way to do this is to find compactly supported stationary subsolutions moving at speed $c>0$, for all $c<c_{BRR}$, in the direction of the road.

In the paper \cite{BRR1}, where the spreading speed in the direction of the road was computed in the case where the road is a straight line, the author exhibited compactly supported non-negative subsolutions moving in the direction of the road. In this subsection, we shall see that we can use almost directly the same subsolutions. Therefore, we will not repeat their proof but just recall their result and see how it adapts. This will not work directly in the case where the field is asymptotically a cone, and we adress this issue in Section \ref{almostconical}, where we will need to deal then with geometric complications.

Let us recall the following result from \cite[Section 6]{BRR1}:
\begin{prop}\label{result}
Consider the field-road problem \eqref{syst} with $\Omega := \left\{ (x,y)\in \mathbb{R}^{2} \, , \, y\geq 0 \right\}$:
\begin{equation*}
\left\{
\begin{array}{rlcl}
\partial_{t}u-D\partial_{xx}^{2}u &= \nu v\vert_{y=0}-\mu u &\text{ for }& \quad t >0 \, , \, x\in \mathbb{R} \\
\partial_{t}v-d\Delta v &= f(v) &\text{ for }& \quad t >0 \, , \, (x,y)\in \mathbb{R}\times \mathbb{R}^{+}\\
-d\partial_{y}v\vert_{y=0} &= \mu u-\nu v\vert_{y=0}  &\text{ for }& \quad t >0 \, , \, x\in \mathbb{R}.
\end{array}
\right.
\end{equation*}
Assume $D>2d$. For $c<c_{BRR}$ close enough to $c_{BRR}$, there is a generalized subsolution of the form $\left(u(x-ct),v(x-cte,y)\right)$ where $(u,v)$ is non-null non-negative and compactly supported.
\end{prop}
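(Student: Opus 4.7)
The plan is to construct compactly supported oscillating traveling-wave subsolutions by passing to a slightly perturbed linearization and taking the real part of complex exponentials. By the KPP hypothesis and continuity of $f(v)/v$ at $v=0$, for every $\delta>0$ there is a threshold $v_{\delta}>0$ such that $f(v)\geq (f'(0)-\delta)v$ for $v\in[0,v_{\delta}]$. Moreover, since the critical speed depends continuously on the reaction coefficient, for any $c<c_{BRR}$ sufficiently close to $c_{BRR}$, one can fix $\delta>0$ such that $c<c_{BRR}^{(\delta)}<c_{BRR}$, where $c_{BRR}^{(\delta)}$ denotes the analogue of $c_{BRR}$ for the linearization in which $f'(0)$ is replaced by $f'(0)-\delta$. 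Producing a compactly supported generalized subsolution of the nonlinear problem is then reduced to producing one for the perturbed linear problem, of amplitude bounded by $v_{\delta}$.

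Next, I would seek traveling-wave solutions of the perturbed linearized system of the form $\bigl(e^{-\alpha(x-ct)},\,\gamma\, e^{-\alpha(x-ct)}e^{-\beta y}\bigr)$ with $\alpha,\beta,\gamma\in\mathbb{C}$. Substitution leads to the algebraic system \eqref{explic}, but with $f'(0)$ replaced by $f'(0)-\delta$. By the geometric picture recalled in subsection~\ref{subsecreq}, at $c=c_{BRR}^{(\delta)}$ the curve $\Gamma(c)$ and the associated circle are tangent at a point $(\alpha^{\star}_{\delta},\beta^{\star}_{\delta})$ with positive coordinates; perturbing to $c<c_{BRR}^{(\delta)}$, this tangency splits into a pair of complex-conjugate roots $(\alpha,\beta)$ close to $(\alpha^{\star}_{\delta},\beta^{\star}_{\delta})$, with positive real parts and small imaginary parts. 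Taking real parts of the associated complex exponentials yields real oscillating solutions $(U,V)$ of the perturbed linearized system.

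To localize, I would cut off on a ``first half-wave'': choose $E\subset\mathbb{R}$ and $F\subset\mathbb{R}\times\mathbb{R}^{+}$ equal to the open set on which both $U>0$ and $V>0$ around the lobe nearest the origin, and extend $(U,V)$ by zero outside. Because the imaginary parts of $\alpha,\beta$ are small, this joint positivity region is non-empty and of macroscopic size, and the restricted functions vanish on the boundary of their support. Scaling by a small $\varepsilon>0$ so that $\varepsilon\sup V \leq v_{\delta}$, the pair $(\varepsilon U,\varepsilon V)$ is a compactly supported generalized subsolution of \eqref{syst} on the half-plane: on the positive support the linear equation with reaction coefficient $(f'(0)-\delta)$ is satisfied, and $f(\varepsilon V)\geq (f'(0)-\delta)\varepsilon V$; outside, the zero pair trivially solves the system, and the gluing hypotheses of Proposition~\ref{gen} are automatic since both components vanish together on the boundary of their support.

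The main obstacle is the complex root analysis of step two: showing that the bifurcation at the tangency $(\alpha^{\star}_{\delta},\beta^{\star}_{\delta})$ produces complex-conjugate solutions with positive real parts and imaginary parts small enough to create an open lobe on which $U$ and $V$ are simultaneously positive. This reduces to a transversality computation at the degenerate configuration of the curve-circle intersection, combined with the implicit function theorem, exactly as carried out in \cite[Section~6]{BRR1}. The remainder of the construction -- the cut-off, the rescaling, and the verification of the generalized subsolution inequalities -- is routine.
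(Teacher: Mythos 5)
Your high-level plan matches what the paper cites from \cite{BRR1}: KPP penalization, complex exponential traveling waves whose real part oscillates, truncation to a positivity component, and the generalized comparison principle of Proposition~\ref{gen}. However, the specific $y$-profile you propose breaks the construction at the localization step, and this is a genuine gap, not a routine detail.

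Write $\alpha=a_1+ia_2$, $\beta=b_1+ib_2$ with $a_1,b_1>0$ and $a_2,b_2\neq0$. Then
\[
V(t,x,y)=\Real\bigl(\gamma e^{-\alpha(x-ct)-\beta y}\bigr)=|\gamma|\,e^{-a_1(x-ct)-b_1y}\cos\bigl(\arg\gamma - a_2(x-ct)-b_2 y\bigr),
\]
whose positivity set in $\mathbb{R}\times\mathbb{R}^+$ is a union of slanted half-strips between the lines of constant phase, each reaching $y\to+\infty$. There is no ``first half-wave'' that is both bounded and on whose entire (interior) boundary $V$ vanishes. If you instead intersect a $V$-strip with the positivity interval $E$ of $U$ to get a bounded region $F$, then on the vertical sides of $\partial F$ (where $U=0$) $V$ is generically strictly positive, so the hypothesis $v_1\leq v_2=0$ on $\partial F\cap((0,\infty)\times\Omega)$ in Proposition~\ref{gen} fails. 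Either way, the truncated pair is not a compactly supported generalized subsolution.

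The actual construction in \cite{BRR1}, which the paper summarizes and adapts in subsection~\ref{lower bound}, uses the two-exponential profile $\gamma(y)=\gamma_1 e^{-\beta y}+\gamma_2 e^{\beta y}$ together with the explicit vanishing constraint $\gamma_1 e^{-\beta L}+\gamma_2 e^{\beta L}=0$, which forces $V(\cdot,\cdot,L)\equiv 0$ at a finite height $L$. This adds a fourth scalar equation $z_4=0$ to the algebraic system, introduces the family of critical speeds $c_L<c_{BRR}$ with $c_L\to c_{BRR}$ as $L\to+\infty$, and the existence of the complex roots $(\alpha,\beta,\gamma_1,\gamma_2)$ for $c<c_L$ is proved by a Rouch\'e-type argument rather than by a transversality/implicit-function argument at a single tangent configuration. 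The paper's adaptation also adds the correction $(-\lambda,\lambda\phi(y))$, with $\phi$ compactly supported in $[0,M]$, $M<L$, and $\phi'(0)$ large, precisely to ensure that the positivity component stays strictly inside $\{y<L\}$ and that the gluing with zero is a genuine generalized subsolution. So the philosophy is the same as yours, but the $y$-profile and the finite truncation height are essential, not cosmetic; without them the compactly supported cut-off does not exist.
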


First, let us make an observation. It is important to notice that the result holds only for $c<c_{BRR}$, $c$ ``close" to $c_{BRR}$. This is in contrast with subsection \ref{Dpetit} and leads to some complications. We deal with this difficulty in subsection \ref{strong spreading}.

The proof of Proposition \ref{result} is involved and will not repeated here. The main tool is a Rouché-type theorem. This implies directly the following:

\begin{prop}\label{sous}
Let $\Omega$ be an exactly conical field, i.e., satisfying Hypothesis \ref{acf} with $\rho(x)=a\vert x \vert$ for $\vert x \vert \geq 1$. Consider the field-road system \eqref{syst} and assume that $ D > 2d$. Then, if $e=(e_{1},e_{2}) := \frac{1}{\sqrt{1+a^{2}}}(1,a) \in \mathbb{S}^{1}$ is an unitary vector in the direction of the road, there are, for $c<c_{BRR}$ close enough to $c_{BRR}$, generalized subsolutions of the form $\left(u(x-cte_{1},y-cte_{2}),v(x-cte_{1},y-cte_{2})\right)$ where $(u,v)$ is non-null, non-negative and compactly supported.
\end{prop}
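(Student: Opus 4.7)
The plan is to transplant the compactly supported traveling subsolution provided by Proposition \ref{result} rigidly onto the right branch of the cone, after a large enough time shift so that its support remains within the strictly straight portion of $\partial\Omega$. In this ``flat'' neighborhood the cone system is an isometric copy of the half-plane system, so the subsolution inequalities will transfer verbatim; the generalized comparison principle of Proposition \ref{gen} will then glue the transported pair to the trivial subsolution $(0,0)$ on the rest of $\overline\Omega$.

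Concretely, I would take $(\tilde u(s), \tilde v(s, \tau))$ to be the pair from Proposition \ref{result}, compactly supported in some $[-L, L] \times [0, L]$, and set
\[
e := \frac{1}{\sqrt{1+a^2}}(1, a), \qquad e^\perp := \frac{1}{\sqrt{1+a^2}}(-a, 1),
\]
the unit vectors along and normal to the right branch. For a parameter $t_0 > 0$ to be fixed, I would define
\[
U(t, x, y) := \tilde u\bigl((x, y)\cdot e - c(t+t_0)\bigr), \quad V(t, x, y) := \tilde v\bigl((x, y)\cdot e - c(t+t_0),\, (x, y)\cdot e^\perp\bigr),
\]
extended by zero on the rest of $\overline\Omega$. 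The support of $(U, V)$ at time $t$ is a bounded box centred at $c(t+t_0)e$; an elementary estimate shows that its lowest $x$-coordinate is $(c(t+t_0)-L-aL)/\sqrt{1+a^2}$, which exceeds $1$ once $t_0$ is large enough. In that regime the box lies strictly in $\{x>1\}$, disjoint both from the smoothed vertex region $\{|x|\le 1\}$ and from the left branch $\{(x,-ax): x\le -1\}$. On this support $\partial\Omega$ coincides with the straight ray $\{(x, ax) : x\ge 1\}$, the Laplace--Beltrami operator on $\partial\Omega$ reduces to the second derivative in the arclength $s=(x,y)\cdot e$, and the outward normal is $-e^\perp$, so system \eqref{syst} restricted to this region is literally (a rotation of) the half-plane system solved by $(\tilde u(s-ct), \tilde v(s-ct, \tau))$. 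Hence the inequalities verified by $(\tilde u, \tilde v)$ immediately yield those for $(U, V)$.

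Off the support, $(U, V)=(0,0)$ trivially satisfies the subsolution inequalities, and the two pieces match continuously on the boundary of the support, so the hypotheses of Proposition \ref{gen} are met; its application produces the claimed generalized subsolution of the required traveling-wave form. The main, and essentially only, obstacle is geometric: choosing $t_0$ large enough to separate the transplanted support from the vertex region and the left branch. This step is straightforward since the support translates rigidly at speed $c>0$ along $e$ while both obstructing sets are uniformly bounded in the plane, so no analytic difficulty beyond those already handled in \cite{BRR1} arises.
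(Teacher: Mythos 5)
Your proof is correct and follows essentially the same route as the paper: transplant the traveling subsolution from Proposition \ref{result} rigidly onto the right branch (the paper phrases this as a rotation by $\theta_0-\pi/2$ followed by a large spatial shift $K$, you as a shift $t_0$ in time, which is the same thing), verify that the shifted support stays in the region where $\partial\Omega$ is a straight ray, and glue with $(0,0)$ via the generalized comparison principle. Your version is more explicit about the geometry of the support than the paper's terse argument, but the content is identical.
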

Take $(\underline{u}(x-ct,y),\underline{v}(x-ct,y))$ two compactly supported subsolutions of the system \eqref{syst} with $\rho=0$, as given by Proposition \ref{result}. Up to rotation of $\Omega$ of angle $\theta_{0}-\frac{\pi}{2}$, one just has to take $K$ large enough so that the support of $(\underline{u}(x-ct-K,y),\underline{v}(x-ct-K,y))$ is in $\Omega$ for $t\geq 0$, and then $(u(x-ct,y),v(x-ct,y)):=(\underline{u}(x-ct-K,y),\underline{v}(x-ct-K,y))$ is the moving subsolution we need (in the rotated coordinates).

\subsection{Spreading speed along the road}\label{strong spreading}
Now that we have supersolutions and subsolutions, we conclude the proof of Theorem \ref{mainth} in the case of an exactly conical field:

\begin{theorem}\label{thfinal}
Let $\Omega$ be an exactly conical field, i.e., satisfying Hypothesis \ref{acf} with $\rho(x)=a\vert x \vert$ for $\vert x \vert \geq 1$. Assume $ D > 2d$. Let $(u,v)$ be the solution of the system \eqref{syst} with field $\Omega$, with non-negative and compactly supported initial datum. Then:
\[
\forall c<c_{BRR},h>0, \quad \inf_{\substack{(x,y) \in \Omega \\ dist((x,y),\partial \Omega)<h \\ \vert(x,y)\vert \leq ct}} v(t,x,y) \underset{t\to +\infty}{\longrightarrow}  1, \inf_{\substack{(x,y) \in \partial \Omega \\  \vert(x,y)\vert \leq ct}} u(t,x,y) \underset{t\to +\infty}{\longrightarrow}  1,
\]
and
\[
\forall c>c_{BRR}, \quad \sup_{\substack{(x,y) \in \Omega \\  \vert(x,y)\vert \geq ct}} v(t,x,y)  \underset{t\to +\infty}{\longrightarrow} 0, \sup_{\substack{(x,y) \in \partial \Omega \\  \vert(x,y)\vert \geq ct}} u(t,x,y) \underset{t\to +\infty}{\longrightarrow}  0.
\]

\end{theorem}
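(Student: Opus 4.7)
The upper bound $\sup_{|(x,y)| \geq ct}(u,v) \to 0$ for $c > c_{BRR}$ is exactly Proposition \ref{th1}, so I would concentrate on the lower bound for $v$; the statement for $u$ would then follow by the same scheme applied to a minimising sequence on $\partial \Omega$, unfolding the boundary as in the second step of the proof of Theorem \ref{Liouville}. My plan for $v$ has two stages: first, turn the moving compactly supported subsolutions of Proposition \ref{sous} into a uniform positive lower bound $v \geq \epsilon$ on the whole expanding region, and then upgrade that bound to $v \to 1$ by a Liouville-type contradiction.

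\emph{Stage 1 (positive lower bound).} Fix $c_0 < c_{BRR}$ close enough to $c_{BRR}$ so that Proposition \ref{sous} supplies compactly supported subsolutions $(\underline{u},\underline{v})(x - c_0 t e_1, y - c_0 t e_2)$ travelling along the right branch of the road; the same construction works on the left branch by symmetry. Proposition \ref{inv} yields a time $T_0$ such that for every $T \geq T_0$ the solution $(u(T,\cdot), v(T,\cdot))$ dominates a suitable translate of $(\underline{u},\underline{v})$ centred near the apex of the cone. Launching one such subsolution from each $T \geq T_0$ and applying the generalised comparison principle of Proposition \ref{gen}, I obtain a family of moving lower bounds whose centres, as $T$ varies, sweep the whole half-line $\{(x_*,y_*) + s\vec{e} : s \geq 0\}$. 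Consequently, for any $c_1 \in (0, c_0)$ and any $h > 0$ there exist $\epsilon > 0$ and $T_1 \geq T_0$ such that
\[
v(t,x,y) \geq \epsilon \quad \text{whenever } t \geq T_1, \; dist((x,y), \partial \Omega) \leq h, \; |(x,y)| \leq c_1 t,
\]
and likewise $u \geq \epsilon'$ on the corresponding part of $\partial \Omega$.

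\emph{Stage 2 (upgrade to $v \to 1$).} Fix $c < c_{BRR}$ and choose $c_1 \in (c, c_0)$. Arguing by contradiction, assume $v(t_n,x_n,y_n) \to a < 1$ along a sequence with $t_n \to +\infty$, $|(x_n,y_n)| \leq c t_n$ and $dist((x_n,y_n), \partial \Omega) \leq h$. Translate by setting $v_n(t,x,y) := v(t+t_n, x+x_n, y+y_n)$; since $(x_n,y_n)$ escapes to infinity near one branch of the cone, the translated domains converge (up to rotation) to a half-plane, or to $\mathbb{R}^2$ when $dist((x_n,y_n), \partial \Omega) \to +\infty$. Parabolic estimates give, along a subsequence, an entire limit $(u_\infty, v_\infty)$ solving the limiting field-road problem, with $v_\infty(0,0,0) = a$. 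The crucial observation is that, for any fixed $t \in \mathbb{R}$ and any fixed $(x,y)$ in a small ball $B_\delta$ of the limit field, the launch time required by Stage~1 to put the moving subsolution at $(x_n,y_n)+(x,y)$ at time $t_n+t$ is of order $t_n(1 - c/c_0) \geq T_0$ for $n$ large, because $c < c_0$. Hence $v_n(t,x,y) \geq \eta$ uniformly in $t \in \mathbb{R}$, and at the limit $v_\infty \geq \eta$ on $B_\delta$ for every $t \in \mathbb{R}$.

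To conclude, I would mimic the endgame of the $D \leq 2d$ proof of Section \ref{Dpetit}: letting $(\tilde u, \tilde v)$ be the solution of the limiting field-road problem with initial datum $(0, \eta \mathbf{1}_{B_\delta})$, the invasion property on the half-plane (from \cite{BRR1}) or on $\mathbb{R}^2$ (which reduces to the classical KPP equation) gives $\tilde v(m,\cdot) \to 1$ locally uniformly; comparison via $v_\infty(-m,\cdot) \geq \eta \mathbf{1}_{B_\delta}$ then yields $v_\infty(0,0,0) \geq \tilde v(m,0,0) \to 1$, contradicting $a < 1$. \textbf{Main obstacle.} The principal subtlety is the gap between the restrictive conclusion of Proposition \ref{sous}, which delivers moving subsolutions only at speeds close to $c_{BRR}$, and the full range $c < c_{BRR}$ demanded by Theorem \ref{thfinal}. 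The remedy is the two-stage mechanism above: one single family of moving subsolutions, parametrised by launch time $T$, suffices to cover the whole expanding region $\{|(x,y)| \leq c_1 t\}$ with $c_1 < c_0$, after which Stage~2 converts the resulting positive lower bound into full convergence to $1$. The exactly conical structure of $\Omega$ is essential here to guarantee that the translated domains converge to a half-plane or to $\mathbb{R}^2$, on which Proposition \ref{inv} remains applicable.
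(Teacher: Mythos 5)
Your proposal is essentially the paper's proof: the core device in both is the delayed-launch trick (placing the speed-$c'$ subsolution at a time shifted by $\tau_n = t_n - (x_n,y_n)\cdot e / c'$, so that $\tau_n \geq t_n(1 - c/c') \to +\infty$), followed by extraction of a translated limit and the invasion property applied to the limiting field-road problem on a half-plane. One small caution on your write-up: the Stage~1 conclusion as stated (uniform bound $v \geq \epsilon$ on the whole band $\{dist((x,y),\partial\Omega) \leq h,\ |(x,y)| \leq c_1 t\}$) does not follow directly from sweeping the moving subsolutions, which only give positivity on a tube of the fixed width $\delta$ coming from Proposition~\ref{sous}; however, your Stage~2 only uses the $B_\delta$-ball bound $v_n(t,\cdot) \geq \eta$ on $B_\delta$, exactly as the paper does, and the upgrade to the full $h$-band is delivered by the Liouville/invasion argument on the limit, so the proof as a whole is sound.
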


The second property is exactly Proposition \ref{th1}. As already mentioned, the difficulty in proving the first property is that we only have subsolutions for $c<c_{BRR}$ close enough to $c_{BRR}$.

\begin{proof}
Let $(u,v)$ be the solution arising from a positive initially compactly supported datum $(u_{0},v_{0})$. Without loss of generality, we will assume that $ (u,v) \leq (\frac{\nu}{\mu},1)$. We want to prove that, for all $h>0$, $c<c_{BRR}$, 
\[
\inf_{\substack{(x,y) \in \Omega \\ dist((x,y),\partial \Omega)<h \\ \vert(x,y)\vert \leq ct}} v(t,x,y) \underset{t\to +\infty}{\longrightarrow}  1.
\]
Let $h>0$, $c<c_{BRR1}$, and consider sequences $t_{n},(x_{n},y_{n})$, with $t_{n}\to + \infty$, $dist((x_{n},y_{n}),\partial \Omega)<h$ and $\vert (x_{n},y_{n})\vert \leq ct_{n}$ and such that $v(t_{n},x_{n},y_{n})$ converges. Let us prove that:
\[
v(t_{n},x_{n},y_{n})\underset{n\to +\infty}{\longrightarrow}1.
\]
If $\vert x_{n} \vert \nrightarrow +\infty$, then $y_{n}$ is also bounded and, by the invasion property, $v(t_{n},x_{n},y_{n})\to~1$. Without loss of generality, we assume now that $x_{n}\to+\infty$.

Let $K\in \mathbb{R}$ and let $(\underline{u},\underline{v})$ be a compactly supported non-negative subsolution travelling in the direction $e$ of the right branch of the road, at some speed $c^{\prime} \in (c,c_{BRR})$, as given by Proposition \ref{sous}. Let us define $\tau_{n}=t_{n}- \frac{(x_{n},y_{n})\cdot e}{c^{\prime}}$.

As $(\underline{u}(0,x,y),\underline{v}(0,x,y))$ is compactly supported non-negative and strictly smaller than $1$, because $\tau_{n}\geq t_{n}(1-\frac{c}{c^{\prime}}) \to + \infty$ as $n \to + \infty$ and thanks to the invasion property, there is $n_{K}$ such that, if $n \geq n_{K}$, $ (\underline{u}(0,x,y),\underline{v}(0,x,y)) \leq ( u(\tau_{n}+K,x,y), v(\tau_{n}+K,x,y) )$. Thanks to the maximum principle Proposition \ref{gen}, we have that, for such $n$, for each $t\geq \tau_{n}+K $:
\begin{equation}\label{pdm}
(\underline{u}(t-\tau_{n}-K,\cdot,\cdot),\underline{v}(t-\tau_{n}-K,\cdot,\cdot) )\leq (u(t,\cdot,\cdot),v(t,\cdot,\cdot)).
\end{equation}
Let us define, for $(x,y)\in \Omega$ and $(x,y)\in \partial\Omega$ respectively, and for $  t \geq K+\tau_{n}$:
\begin{equation*}
\left\{
\begin{array}{l}
v^{n}_{K}(t,x,y)=\underline{v}(t-\tau_{n}-K,x,y) \\
u^{n}_{K}(t,x,y)=\underline{u}(t-\tau_{n}-K,x,y).
\end{array}
\right.
\end{equation*}
So that \eqref{pdm} rewrites, for $n\geq n_{K}$, for $t\geq \tau_{n}+K $:
\begin{equation}\label{appl}
(u^{n}_{K}(t,x,y),v^{n}_{K}(t,x,y) )\leq (u(t,x,y),v(t,x,y)).
\end{equation}
Moreover, from the construction of $(\underline{u},\underline{v})$ in \cite{BRR1}, we have that 
\[
(\underline{u}(t,(x,y)+c^{\prime}te),\underline{v}(t,(x,y)+c^{\prime}te))\geq(\epsilon,\epsilon),
\]
for some $\epsilon>0$ and for $\vert (x,y) \vert \leq \delta$ where $\delta>0$. This implies:

\begin{equation}\label{min}
(u^{n}_{K}(t_{n}+K,(x,y)+((x_{n},y_{n})\cdot e )e),v^{n}_{K}(t_{n}+K,(x,y)+((x_{n},y_{n})\cdot e )e) )\geq (\epsilon,\epsilon)
\end{equation}
for $\vert (x,y) \vert \leq \delta$.
Because $t_{n}+K\geq \tau_{n}+K$, we can apply \eqref{appl} at time $t=t_{n}+K$ to get, using \eqref{min}:
\[
\left( u(t_{n}+K,(x,y)+((x_{n},y_{n})\cdot e )e),v(t_{n}+K,(x,y)+((x_{n},y_{n})\cdot e )e)\right)\geq (\epsilon,\epsilon ),
\]
for $\vert(x,y)\vert \leq \delta$.

Now, define $u_{n}(t,x,y)=u(t+t_{n},(x,y)+((x_{n},y_{n})\cdot e )e)$ and $v_{n}(t,x,y)=v(t+t_{n},(x,y)+((x_{n},y_{n})\cdot e )e)$.

Thanks to the interior and partial boundary parabolic estimates, we can extract sequences that converge $C^{2,\alpha}_{loc}$, to entire solutions $(u_{\infty,}v_{\infty})$ of the system \eqref{syst}, where $\Omega$ is the graph of $\rho(x)=ax$.
 Moreover, because $(u_{n}(K,x,y),v_{n}(K,x,y))\geq (\epsilon,\epsilon)$ for $\vert (x,y) \vert \leq \delta$ and for all $K\geq -n $, we have that, for $\vert (x,y) \vert \leq \delta$ and for all $t$, $(u_{\infty}(t,x,y),v_{\infty}(t,x,y))\geq (\epsilon,\epsilon)$ .

Now, this implies that $(u_{\infty},v_{\infty})=(\frac{\nu}{\mu},1)$. Indeed, let $m>0$ and let $\epsilon>0$ small enough so that $\epsilon 1_{B_{\delta} } \leq v(-m,x,y)$. Let us take $(0,\epsilon 1_{B_{\delta}})$ as initial datum for the problem \eqref{syst} with $\rho(x)=ax$. The comparison principle Proposition \ref{gen} tells us that, if $(U,V)$ is the solution arising from this initial datum, we have:
\[
(U(m,x,y),V(m,x,y))\leq(u_{\infty}(0,x,y),v_{\infty}(0,x,y)).
\]
Because of the invasion property, letting $m\to + \infty$, we get
\[
(\frac{\nu}{\mu},1)\leq(u_{\infty}(0,x,y),v_{\infty}(0,x,y)).
\]
Observe that $(a_{n},b_{n}):=(x_{n}y_{n})-((x_{n},y_{n})\cdot e)e$ has bounded norm.
So, we have
\[
\lim_{n\to \infty} v(t_{n},x_{n},y_{n}) = \lim_{n\to \infty} v_{n}(0,a_{n},b_{n})\geq1.
\]
The last inequality comes from the $C^{2,\alpha}_{loc}$ convergence of $v_{n}$ to $v_{\infty}$. Because $v\leq 1$, this implies that $\lim_{n\to \infty} v(t_{n},x_{n},y_{n}) =1$. By the same token as in subsection \ref{Dpetit}, the same holds for $u$ : if we consider a sequence $t_{n}$ and a sequence $(x_{n},y_{n}) \in \partial \Omega$ such that $\vert (x_{n},y_{n}) \vert \leq ct_{n}$, we can define $(u_{n},v_{n}):=(u(\cdot + t_{n},\cdot + x_{n},\cdot + y_{n}),v(\cdot + t_{n},\cdot + x_{n},\cdot + y_{n}))$, which converges to some limit $(u_{\infty},1)$ solution of a system \eqref{syst} with field $\Omega$ a half plane. This implies that $u_{\infty}=\frac{\nu}{\mu}$, hence the result.
\end{proof}

Now, we turn to the more general case of a field that is asymptotically a cone.

\section{Asymptotically conical-shaped field}\label{almostconical}
We now generalize the results of the previous section to the case of a field that is asymptotically a cone, i.e.,  that satisfies Hypothesis \ref{acf}, proving then Theorem \ref{mainth}. Here again, we build supersolutions and subsolutions. The supersolutions we use are the same as in the previous section. The difficulty here is to build subsolutions.

\subsection{Supersolutions}\label{supersol2}

As before, we start to prove the following:

\begin{prop}\label{th2}
Let $\Omega$ be an asymptotically conical field, i.e., satisfying Hypothesis \ref{acf}. Let $c>c_{BRR}$. Therefore, if $(u,v)$ is the solution of the field-road system \eqref{syst} with field $\Omega$ arising from a  non-negative compactly supported initial datum, then:
\[
\sup_{\substack{ \vert (x,y) \vert \geq c t \\ (x,y) \in \partial \Omega }} u(t,x,y) \underset{t \to +\infty}{\longrightarrow}  0 \quad , \quad  \sup_{\substack{\vert (x,y) \vert \geq c t \\ (x,y)\in \Omega}} v(t,x,y) \underset{t \to +\infty}{\longrightarrow} 0.
\]

\end{prop}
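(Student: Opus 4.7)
The strategy is to reuse the supersolution ansatz built in the proof of Proposition \ref{th1} for the exactly conical field of half-angle $\theta_0$, exploiting the fact that at infinity $\Omega$ is arbitrarily close to this exact cone. The perturbations introduced by the geometry of $\partial\Omega$ can be absorbed in the algebraic system \eqref{perturbed} because we only need the inequality $c > c_{BRR}$ to be strict.

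First I would fix $c>c_{BRR}$, then choose $\eta>0$ small and $(\alpha,\beta,\gamma)$ strictly positive solving \eqref{perturbed} with some slack, exactly as in subsection \ref{supersol}. I would take the candidate supersolution
\[
\overline{u}(t,r,\theta)=\min\Big\{\tfrac{\nu}{\mu}A,\,e^{-\alpha(r-ct)}\Big\},\qquad
\overline{v}(t,r,\theta)=\min\{A,\,\gamma\Psi(r,\theta)e^{-\alpha(r-ct)}\},
\]
with the same angular profile $\Psi$ as in Proposition \ref{th1}, built using the asymptotic half-angle $\theta_0$ provided by Hypothesis \ref{acf}. The interior inequality $-d\Delta(\gamma\Psi e^{-\alpha(r-ct)})+c\alpha(\ldots)\geq f'(0)(\ldots)$ was already verified for $r\geq R$ in the exactly conical case; this verification does not see the shape of $\partial\Omega$ and so remains valid here.

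Next I would check the two inequalities that live on the road. For a boundary point $(x,\rho(x))$, its polar angle is $\theta(x)=\arctan(x/\rho(x))\to \pm\theta_0$ and the outer unit normal $n=(\rho'(x),-1)/\sqrt{1+(\rho'(x))^2}$ tends to its exactly conical analog as $|x|\to\infty$. Using the local expression
\[
\partial^{2}_{ss}\overline{u}=\tfrac{1}{\sqrt{1+(\rho')^{2}}}\partial_{x}\Big(\tfrac{1}{\sqrt{1+(\rho')^{2}}}\partial_{x}\big(e^{-\alpha(r(x)-ct)}\big)\Big),\qquad r(x)=\sqrt{x^{2}+\rho(x)^{2}},
\]
and expanding $\partial_{n}\overline{v}$ in terms of $(\partial_r,\partial_\theta)$, each expression decomposes as the corresponding quantity for the exact cone plus error terms which are polynomial combinations of $|\rho(x)-a|x||/r$, $|\rho'(x)\mp a|$ and $|\rho''(x)|$. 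By Hypothesis \ref{acf} and the uniform $C^{1,\alpha}$ regularity of $\rho'$, all of these vanish uniformly as $r\to\infty$. Since the inequalities \eqref{perturbed} are strict at the chosen $(c,\alpha,\beta,\gamma,\eta)$, there exists $R'$ large enough so that, for $r\geq R'$, the road-road and coupling inequalities in \eqref{systpsi} are still satisfied after adding these vanishing perturbations.

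For $r< R'$ I would choose $A$ large enough that $\overline{u}\equiv \frac{\nu}{\mu}A$ and $\overline{v}\equiv A$ there; since $f$ is extended by $0$ beyond $1$ this constant is a supersolution. A time translation guarantees that the compactly supported initial datum lies below $(\overline{u},\overline{v})$, and the generalized comparison principle of Proposition \ref{gen} then gives $(u,v)\leq(\overline{u},\overline{v})$ for all $t\geq 0$. Because $\sup_{|(x,y)|\geq ct}\overline{v}\to 0$ and similarly for $\overline{u}$, the proposition follows. The main delicate point is the bookkeeping in Step 2, ensuring that the perturbations in the Laplace--Beltrami and in $\partial_n$ are \emph{uniformly} small as $r\to\infty$; here the hypothesis $\rho'\in C^{1,\alpha}_{\mathrm{Unif}}$ together with the pointwise limits of $\rho,\rho',\rho''$ will be the exact regularity required to make the absorption work.
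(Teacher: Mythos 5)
Your proposal is correct and follows essentially the same route as the paper: reuse the supersolution $(\overline{u},\overline{v})$ with the angular profile $\Psi$ from the exactly conical case, observe that the interior equation is unchanged, expand the Laplace--Beltrami and conormal-derivative terms on $\partial\Omega$ as the exactly conical quantities plus $o_\infty(|x|)$ errors (controlled by Hypothesis~\ref{acf}), and absorb these errors into the strict slack in the perturbed algebraic system~\eqref{perturbed} before concluding with Proposition~\ref{gen}. The paper simply carries out this bookkeeping more explicitly, introducing $\tilde r(x)$, $\tilde\theta(x)$, $\tau(x)$ and verifying term by term that $\Psi$, $\partial_r\Psi$, $\frac{1}{\tilde r}\partial_\theta\Psi$ converge to their exactly conical values along $\partial\Omega$.
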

The idea of the proof is to check that we can still use the same supersolutions as in subsection \ref{supersol}.

\begin{proof}
We shall use here again the same polar coordinates as in Section \ref{conical}, $(r,\theta)$, where $r\in (0,+\infty)$ and $\theta \in [ -\theta_{0},\theta_{0}]$. Again, we recall that we do not use the standard orientation, and that here we have $x=r \sin (\theta)$ and $y=r \cos(\theta)$.
Let us consider the functions, in polar coordinates $(r,\theta)$: 
\[
\left\{
\begin{array}{l}
\overline{u}(t,r,\theta):=\min \{ \frac{\nu}{\mu}A,e^{-\alpha ( r-ct )} \}  \\
\overline{v}(t,r,\theta):=\min \{ A,\gamma \Psi (r,\theta) e^{-\alpha( r-ct )} \},
\end{array}
\right.
\]
where $A\geq\max\left\{1,\frac{\mu}{\nu}\sup u_{0}, \sup v_{0}\right\}$. We will assume that $f(v)$ is extended by zero for $v\geq 1$, which makes $(\frac{\nu}{\mu}A,A)$ a stationary solution of the system \eqref{syst}. $\Psi$ was defined in the proof of Theorem \ref{th1}:
\[
\Psi(r,\theta)=\frac{1}{1+\eta}\left( \phi \left( \frac{\sqrt{R}}{\theta_{0}}(\vert \theta \vert - \theta_{0})+1 \right) e^{\beta r (\vert \theta \vert - \theta_{0})}+\eta \right).
\]
We recall that $\phi$ is a $C^{\infty}$ function such that $\phi(x) = 0$ if $x<0$, $\phi(x) =1 $ if $x>1$, and $\phi$ strictly increasing in between.

For notational simplicity, we introduce the functions, defined for $x\in \mathbb{R}$, $\tau(x)=\sqrt{1+\rho^{\prime 2}(x)}$, $\tilde{r}(x)=\sqrt{x^{2}+\rho^{2}(x)}$ and $\tilde{\theta}(x)$ such that, if $x\in \mathbb{R}$, $(\tilde{r}(x),\tilde{\theta}(x))$ is the representation in our polar coordinates of the boundary point, expressed in cartesian coordinates, $(x,\rho(x))$ (if $a>0$, then, at least for $x$ large enough, we would have $\tilde{\theta}(x)=\arctan(\frac{x}{\rho(x)})$). The first and third equations of the system are set on $\partial \Omega$. On this set, we will highlight the dependance of $\Psi$ and its derivatives on $x$ writing $\Psi = \Psi(\tilde{r}(x),\tilde{\theta}(x))$. Let $R$ be a positive number, to be chosen "large enough" after. As in the proof of Proposition \ref{th1}, we just have to plot $(e^{-\alpha ( r-ct ) },\gamma \Psi (r,\theta) e^{-\alpha( r-ct )})$ in the system \eqref{syst} and check that this is a supersolution, for $r \geq R$. An easy computation gives us that this couple is a supersolution if the following holds:

\begin{equation}\label{surR}
\left\{
\begin{array}{lll}
\alpha c -D \Big( \frac{\alpha^{2}}{\tau^{2}}(\tilde{r}^{\prime})^{2}-\frac{\alpha}{\tau} \left( \frac{\tilde{r}^{\prime}}{\tau}\right)^{\prime}    \Big) \geq \nu \gamma \Psi- \mu \quad &\text{on}& \quad \partial \Omega \cap B(0,R)^{c} \\
\alpha c -d \frac{\tilde{\Delta }\Psi}{\Psi} \geq f^{\prime}(0) \quad &\text{on}& \quad  \Omega \cap B(0,R)^{c} \\
\frac{d\gamma}{\tau}\frac{1}{\tilde{r}}\Big(  (\partial_{r}\Psi-\alpha\Psi)(x\rho^{\prime}-\rho)+\frac{1}{\tilde{r}}\partial_{\theta}\Psi(\rho \rho^{\prime}+x)\Big) \geq \mu -\nu  \gamma \Psi \quad &\text{on}& \quad \partial \Omega \cap B(0,R)^{c}.
\end{array}
\right.
\end{equation}
These come easily from the local coordinate representation of the Laplace-Beltrami~: $\partial_{ss}=\frac{1}{\tau}\partial_{x}(\frac{1}{\tau}\partial_{x})$ and changing coordinates. Observe that the equations on the road, at least when $R$ is large enough, are only perturbations of the system \eqref{perturbed}. To make this observation rigorous, we have to study the behavior of $\Psi,\partial_{r}\Psi, \partial_{\theta}\Psi,\tau$ and $\tilde{r}$ at infinity.

More precisely, denoting $o_{\infty}(z)$ a generic quantity that goes to $0$ as $z$ goes to $+\infty$, we want to show that : $\Psi(\tilde{r}(x),\tilde{\theta}(x))=1+o_{\infty}(\vert x \vert)$ and $ \partial_{r}\Psi(\tilde{r}(x),\tilde{\theta}(x))  =o_{\infty}(\vert x \vert)$ and also that $\frac{1}{\tilde{r}(x)}\partial_{\theta}\Psi(\tilde{r}(x),\tilde{\theta}(x)) = sign(\tilde{\theta}(x))\frac{1}{1+\eta}\beta+~o_{\infty}(\vert x \vert)$ (the quantity $sign(\tilde{\theta}(x))$ is equal to $1$ if $x$ is positive large enough and to $-1$ if $x$ is negative small enough).

This follows easily from the hypotheses. Indeed, we have that $\tilde{\theta}(x) \to \theta_{0}$. This implies that, for $\vert  x \vert $ large enough:
\[
\left\vert \tilde{r}(x)\left(\vert \tilde{\theta}(x)\vert -\theta_{0}\right)\right\vert \sim \sqrt{x^{2}+\rho^{2}(x)} \left\vert \sin(\vert \tilde{\theta}(x)\vert -\theta_{0}) \right\vert,
\]
and it is easy to see that $\sqrt{x^{2}+\rho^{2}(x)}\vert \sin(\vert \tilde{\theta}(x)\vert -\theta_{0}) \vert \leq \vert \rho(x)-a \vert x \vert \vert \to 0$. This implies that $\Psi(\tilde{r}(x),\tilde{\theta}(x))=1+o_{\infty}(\vert x \vert)$.

The fact that $ \partial_{r}\Psi(\tilde{r}(x),\tilde{\theta}(x))=o_{\infty}(\vert x \vert)$ can be proven the same way. The last fact, i.e., $\frac{1}{\tilde{r}(x)}\partial_{\theta}\Psi(\tilde{r}(x),\tilde{\theta}(x)) = sign(\tilde{\theta}(x))\frac{1}{1+\eta}\beta+~o_{\infty}(\vert x \vert)$ arises when combining what precedes and using that:

\begin{align*}
\frac{1}{\tilde{r}}\partial_{\theta}\Psi(\tilde{r},\tilde{\theta})=& \frac{1}{1+\eta}\Big(  sign(\tilde{\theta})\frac{\sqrt{R}}{\theta_{0}}\frac{1}{\tilde{r}}\phi^{\prime} \left( \frac{\sqrt{R}}{\theta_{0}}(\vert \tilde{\theta} \vert - \theta_{0})+1 \right) e^{\beta \tilde{r} (\vert \tilde{\theta} \vert - \theta_{0})}  \\
&+sign(\tilde{\theta})\beta \phi\left( \frac{\sqrt{R}}{\theta_{0}}(\vert \tilde{\theta} \vert - \theta_{0})+1 \right) e^{\beta \tilde{r} (\vert \tilde{\theta} \vert - \theta_{0})}  \Big).
\end{align*}
Now, some easy computations imply, if $(x,\rho(x))\in \partial \Omega \cap B(0,R)^{c}$:
 \[
 \Big( \frac{\alpha^{2}}{\tau^{2}}(\tilde{r}^{\prime})^{2}-\frac{\alpha}{\tau} \left( \frac{\tilde{r}^{\prime}}{\tau}\right)^{\prime}    \Big)=\alpha^{2}+o_{\infty}(\vert x \vert)
 \]
 and, using what precedes:
\[
\frac{1}{\tau \tilde{r}}\Big(  (\partial_{r}\Psi~-~\alpha\Psi)(x\rho^{\prime}-\rho)+~\frac{1}{\tilde{r}}\partial_{\theta}\Psi(\rho \rho^{\prime}+x)\Big)=  \frac{1}{1+\eta}\beta+o_{\infty}(\vert x \vert).
\]
Now, we can come back to the system \eqref{surR}. Chosing $R$ large enough, we have that the system is verified if the following algebraic system is verified, for $\epsilon>0$ as small as we want:
 
\begin{equation}
\left\{
\begin{array}{l}
\alpha c -D \alpha^{2}-D\epsilon \geq \nu \gamma (1+\epsilon) -\mu  \\
\alpha c  -d \alpha^{2} \geq f^{\prime}(0)+d\beta^{2}  \\
d\gamma\Big(  \frac{1}{1+\eta}\beta-\epsilon \Big) \geq \mu -\nu  \gamma (1-\epsilon).
\end{array}
\right.
\end{equation}
Once again, the presence of $\epsilon$ does not change the algebraic structure of the system~: as in subsection \ref{supersol}, we see that the problem is again equivalent to finding the intersection in the $(\beta , \alpha)$ plane of the graph of :
\[
\alpha(\eta , \epsilon , c,\beta)^{\pm} := \frac{c\pm \sqrt{c^{2}+4D\mu\frac{d\beta-d(\eta+1)\epsilon}{d\beta-d\epsilon(1+\eta)+\nu(1-\epsilon)(1+\eta)}-4D^{2}\epsilon}}{2D}
\]
and of a circle of equation $(\alpha+\frac{c}{2d})^{2}+\beta^{2}=\frac{c^{2}-4df^{\prime}(0)}{4d^{2}}$ . But again, when $\eta, \epsilon$ are small enough (and $\epsilon$ can be chosen as small as we want by taking $R$ large enough) the graph of $\alpha(\eta , \epsilon , c,\beta)^{\pm}$ is as close as we want to the graph of the $\alpha(0,0,c,\beta)^{\pm}$ locally uniformly on $\beta$. This allows us to conclude, using again the results of \cite[section 5]{BRR1}, that there are $\alpha>0$, $\gamma>0$ and $\beta>0$  such that the algebraic system is verified. Hence, $(\overline{u},\overline{v})$ is a generalized supersolution of our system, yielding the result by comparison.
\end{proof}

Now, we establish the equivalent of Proposition \ref{sous} for asymptotically conical fields, i.e., we find subsolutions moving at speed $c<c_{BRR}$ (for $c$ close enough to $c_{BRR}$) in the direction of the road.

\subsection{Subsolutions and conclusion}\label{lower bound}
In subsection \ref{subsol}, when the field was exactly conical, the particular geometry of the domain helped us : we were able to use the same subsolutions as in the flat-road case of \cite{BRR1}. Here, we have to deal with a more general domain, that looks asymptotically like a cone. Our aim in this subsection is to prove the same lower estimates on the spreading speed as in the previous section. 

\begin{prop}
Let $\Omega$ be an asymptotically conical field, i.e., satisfying Hypothesis \ref{acf}. Let $\rho$ be the function defining the boundary of $\Omega$ and $a$ the coefficient of the asymptotic to the right branch of the road. Consider the field-road system \eqref{syst} with field $\Omega$ and assume that $ D > 2d$. Then, if $e=(e_{1},e_{2}) := \frac{1}{\sqrt{1+a^{2}}}(1,a) \in \mathbb{S}^{1}$ is an unitary vector in the direction of the road, there are, for $c<c_{BRR}$ close enough to $c_{BRR}$, generalized subsolutions of the form $(u(x-cte_{1},y+\varepsilon(x)-cte_{2}),v(x-cte_{1},y+\varepsilon(x)-cte_{2}))$, where $(u,v)$ is nonnegative and compactly supported and where $\varepsilon(x) \to 0$ as $x \to +\infty$.
\end{prop}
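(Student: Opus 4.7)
The plan is to reduce to the flat-road subsolution of Proposition~\ref{result} by straightening the asymptotic cone via an $x$-dependent $y$-shift. First, I would choose $\varepsilon \in C^2(\mathbb{R})$ that agrees with $ax - \rho(x)$ for $x$ larger than some threshold $x_0$ and is smoothly extended otherwise. Hypothesis~\ref{acf} then guarantees that $\varepsilon(x), \varepsilon'(x)$ and $\varepsilon''(x)$ all tend to $0$ as $x \to +\infty$. Crucially, using the identity $e_2 = a e_1$, the map $(x,y) \mapsto (\xi,\eta) := (x - cte_1,\, y + \varepsilon(x) - cte_2)$ sends the right branch $\{y=\rho(x),\, x \geq x_0\}$ \emph{exactly} onto the fixed line $\{\eta = a\xi\}$ (independently of $t$), and locally $\Omega$ onto the half-plane $\{\eta \geq a\xi\}$.

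Next, composing with the rotation $R$ that sends $\{\eta \geq a\xi\}$ onto $\{Y \geq 0\}$ and $e$ onto the positive $X$-axis, I would define the profile
\[
(u(\xi,\eta), v(\xi,\eta)) := (\underline U, \underline V) \circ R(\xi,\eta),
\]
where $(\underline U, \underline V)$ is the compactly supported, non-negative, traveling-wave subsolution at some speed $c \in (c_{BRR}-\delta, c_{BRR})$ furnished by Proposition~\ref{result}. A chain-rule computation in the moving coordinates shows that in the bulk equation our candidate satisfies the flat-road reaction--diffusion system moving at speed $c$ in direction $e$, plus a perturbation whose coefficients are $O(|\varepsilon'|+|\varepsilon''|)$; on the boundary, the Laplace--Beltrami and Neumann terms introduce further perturbations of order $O(|\rho'-a|+|\rho''|)$. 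All these quantities vanish as $x \to +\infty$. Since the $(\xi,\eta)$-support of $(u,v)$ is a fixed compact set $K$, its image in $(x,y)$ at time $t$ lies in $K + cte$; for $t$ large, the perturbations are uniformly arbitrarily small on the support.

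To close the argument, I would absorb this small perturbation into slack coming from the KPP structure. Multiplying the BRR1 profile by a factor $\lambda \in (0,1)$ preserves the subsolution property (the KPP condition yields $f(\lambda v) \geq \lambda f(v)$), and moreover produces a strict interior slack $\delta_0 > 0$ on the compact support. Choosing $T_0$ so that the perturbation norm is below $\delta_0/2$ for $t \geq T_0$, the shifted profile becomes a generalized subsolution of~\eqref{syst}. The invasion property Proposition~\ref{inv} ensures that the genuine solution of~\eqref{syst} dominates our profile on the support at time $T_0$, and the generalized comparison principle Proposition~\ref{gen} propagates this domination for all subsequent times.

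The hardest part will be controlling the geometric perturbation to the Laplace--Beltrami and Neumann boundary terms, which involves $\rho''$: the BRR1 subsolution is built via a delicate Rouché-type argument with prescribed exponential tails, so one must check that the KPP-induced slack actually dominates this curvature correction, especially near the boundary of the support where the slack degenerates. This is where Hypothesis~\ref{acf} plays a decisive role: $\rho'' \to 0$ and $\rho' \to \pm a$ at infinity are precisely the hypotheses needed to make the corrections arbitrarily small for $t$ large enough, so that the perturbed inequality can be closed.
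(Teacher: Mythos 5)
Your first step — the change of variables that straightens the branch by shifting $y$ by the asymptotic gap $\tilde\rho(x)-\rho(x)$, producing a flat (or exactly conical) domain with $x$-dependent coefficients that become constant at infinity — is precisely the paper's first step, and the observation that the support travels to $x\to+\infty$ so the coefficient errors become small is also the right one. The gap is in how you produce the slack that absorbs those errors.

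You propose to take the already-truncated, compactly supported subsolution $(\underline U,\underline V)$ from Proposition \ref{result}, multiply it by $\lambda\in(0,1)$, and use the KPP inequality $f(\lambda v)\ge\lambda f(v)$ to create a margin $\delta_0>0$. This fails in two related ways. First, the KPP slack is of order $v$, so it degenerates on the boundary of the support where $v\to 0$; but the perturbation you need to dominate there is of the form (coefficient error) $\times$ (first and second derivatives of $v$), and these derivatives do \emph{not} vanish on the boundary of the support — the profile typically crosses zero transversally. So near $\partial(\mathrm{supp}\,v)$ you are asking an $O(v)$ quantity to beat an $O(1)$ one, and no smallness of $\rho''$ or $\rho'-a$ rescues this, because those factors are uniform in $y$ while the slack is not. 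Second, the truncated $(\underline U,\underline V)$ is only a \emph{generalized} subsolution: it is continuous but not $C^2$ across the support boundary, so applying the perturbed elliptic and Laplace--Beltrami operators to it there is not even well-defined. You flag the boundary-of-support degeneracy as ``the hardest part,'' but the argument you give does not close it.

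The paper's route is different in exactly this respect: rather than black-boxing the Proposition \ref{result} profile and post-perturbing it, it reopens the BRR construction. It works with the smooth (not compactly supported) complex-exponential pair $(U,V)=\Real\big((1,\gamma(y))e^{-\alpha(x-ct)}\big)$ satisfying the linearized constant-coefficient equations \emph{exactly} ($z_1=z_2=z_3=z_4=0$), and adds the correction $(-\lambda,\lambda\phi(y))$ with $\phi(0)=0$, $\phi'(0)=(2\mu+1)/d$, $d\phi''+f'(0)\phi\ge 1$. This correction produces a \emph{uniform} margin ($\mu\lambda$ on the road equations, $\lambda$ in the bulk) that does not degenerate as $v\to 0$, precisely because it is additive rather than multiplicative. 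The perturbation inequalities are then verified on the localized strip $\Omega_{L,\Lambda,R,c}$ (taking $\Lambda$ large so the coefficient errors fall below the $\lambda$-margin), and the compactly supported generalized subsolution is obtained only at the very end by selecting a positivity component of $(u_\lambda,v_\lambda)$. To repair your argument you would need to replace the $\lambda\cdot(\text{profile})$ device by this additive $(-\lambda,\lambda\phi)$ device, i.e.\ essentially re-derive the BRR subsolution in the perturbed setting rather than post-process it.
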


The proof is divided in several steps. The idea is the following : first, changing variables, our problem will be set on an \emph{exactly} conical field instead of an \emph{asymptotically} conical field, but with coefficients that are non-constant. Then, following the same lines as in \cite{BRR1}, we build \emph{complex} subsolutions $(u,v)$ of the field-road system (actually, only near to the road) and build a couple of functions $(\phi,\psi)$ (not compactly supported) so that $(u+\phi,v+\psi)$ is subsolution of our problem. Finally, we turn this couple into a compactly supported generalized subsolution of the system by isolating one of its positivity component, in order to apply the generalized comparison principle Proposition \ref{gen}.

For simplicity, we focus on the case where $a=0$, i.e., the field is asymptotically a half plane. Once the result proven in this case, it extends directly to the case of a general coefficient $a$, by the same token as in the proof of Proposition \ref{sous} (rotation of the field and placing the moving subsolutions at time $t=0$ far from the origin).

\begin{proof}

\emph{First step : reduction to the case of an exactly conical field.} Consider the asymptotically conical field-road model \eqref{syst} with field $\Omega$. The function $\rho$, the coefficient $a$ and the angle $\theta_{0}$ are fixed. Let $(u_{0},v_{0})$ be a non-negative compactly supported initial datum and let $(u,v)$ be the solution arising from this datum. We define $\tilde{\rho}$ to be the graph of an \emph{exactly} conical field such that $\tilde{\rho}(x)= a \vert x \vert $ if $\vert x \vert \geq 1$. We call $\tilde{\Omega}$ the field given by the epigraph of $\tilde{\rho}$ (as defined in Hypothesis \ref{acf}). We define:

\begin{equation}\label{changvariables}
\left\{
\begin{array}{llllr}
\tilde{u}(t,x,\tilde{\rho}(x)) &:= &u(t,x,\rho(x)),   & \quad (x,\tilde{\rho}(x)) \in \partial \tilde{\Omega}               \\
\tilde{v}(t,x,y) &:= &v(t,x,y+\rho(x)-\tilde{\rho}(x)),  &  \quad (x,y)\in \tilde{\Omega},
\end{array}
\right.
\end{equation}
so that $(\tilde{u},\tilde{v})$ is defined on $\partial \tilde{\Omega} \times \tilde{\Omega}$. This couple of functions will satisfying a field road system but with non-constant coefficients. More precisely, define the diffusion matrix: 
\[
A(x)= \left( \begin{matrix}  1 & -(\rho^{\prime}-\tilde{\rho}^{\prime})(x) \\  -(\rho^{\prime}-\tilde{\rho}^{\prime})(x) & 1+(\rho^{\prime}-\tilde{\rho}^{\prime})^{2}(x)  \end{matrix} \right)
\]
and $\tau_{\rho}(x)=\sqrt{1+ \rho^{\prime 2}(x)}$, $\tau_{\tilde{\rho}}(x)=\sqrt{1+ \tilde{\rho}^{\prime 2}(x)}$. It is easy to see that the functions $(\tilde{u},\tilde{v})$ satisfy the following system (where $n$ is the outer normal to $\tilde{\Omega}$):

\begin{equation}
\left\{
\begin{array}{llcl}\label{changed variables}
\partial_{t}\tilde{u}-D\frac{1}{\tau_{\rho}}\partial_{x}(\frac{1}{\tau_{\rho}}\partial_{x}\tilde{u}) &= \nu \tilde{v}-\mu \tilde{u} &\text{ for }& \quad t >0 \, , \, (x,y)\in \partial \tilde{\Omega} \\
\partial_{t}\tilde{v}-d\nabla(A\nabla \tilde{v}) &= f(\tilde{v}) &\text{ for }& \quad t >0 \, , \, (x,y)\in \tilde{\Omega}\\
d \left(\frac{\tau_{\tilde{\rho}}}{\tau_{\rho}} \right) n A \nabla \tilde{v} &= \mu \tilde{u}-\nu \tilde{v} &\text{ for }& \quad t >0 \, , \, (x,y)\in \partial \tilde{\Omega}.
\end{array}
\right.
\end{equation}
This can be seen as a field-road system with general coefficients. What we will use here is that the coefficients become asymptotically constant, in some sense to be made clear after. We will build a compactly supported subsolution of this system \eqref{changed variables} moving in the direction of the road $\frac{1}{\sqrt{1+a^{2}}}(1,a):=(e_{1},e_{2})$ at speed $c<c_{BRR}$ of the form $\left(\Phi(x-cte_{1},\tilde{\rho}(x)-cte_{2}),\Psi(x-cte_{1},y-cte_{2})\right)$. Doing the inverse change of variables we did in \eqref{changvariables}, we will have a subsolution of the field-road system \eqref{syst} with field $\Omega$ of the form:
\[
\left(\Phi(x-cte_{1},\rho(x)+(\tilde{\rho}(x)-\rho(x))-cte_{2}),\Psi(x-cte_{1},y+(\tilde{\rho}(x)-\rho(x))-cte_{2})\right).
\]
Because of the hypotheses on $\rho, \tilde{\rho}$, we indeed have that $\vert \rho(x)-\tilde{\rho}(x)\vert \to 0$ as $\vert x \vert \to +\infty$, which gives the result.

As said before the proof, we focus on the case where $a=0$. In this case, we take $\tilde{\rho}(x)=0$. Then, $\tau_{\tilde{\rho}}=1$. For notational simplicity, from now on we forget the dependance of $\tau_{\rho}$ on $\rho$ and write $\tau$ instead. The system \eqref{changed variables} then rewrites:

\begin{equation}
\left\{
\begin{array}{llcl}\label{AsPlat}
\partial_{t}\tilde{u}-D\frac{1}{\tau}\partial_{x}(\frac{1}{\tau}\partial_{x}\tilde{u}) &= \nu \tilde{v}\vert_{\partial \Omega}-\mu \tilde{u} &\text{ for }& \quad t >0 \, , \, x\in \mathbb{R}\\
\partial_{t}\tilde{v}-d\nabla(A\nabla \tilde{v}) &= f(\tilde{v}) &\text{ for }& \quad t >0 \, , \, (x,y)\in \mathbb{R}\times\mathbb{R}^{+} \\
-d \frac{1}{\tau}  e_{y} A \nabla \tilde{v}\vert_{\partial \Omega} &= \mu \tilde{u}-\nu \tilde{v}\vert_{\partial \Omega} &\text{ for }&  \quad t >0 \, , \, x\in \mathbb{R}.
\end{array}
\right.
\end{equation}
\emph{Second step : Building complex subsolutions.} We start to introduce some notations. if $A$ is a $C^{1}$ matrix and if $\tau$ is a $C^{1}$ positive function, we define the three following linear operators: 
\begin{equation*}
\left\{
\begin{array}{lll}
 P^{1}_{\tau}\tilde{u} &:=&\partial_{t}\tilde{u}-D\frac{1}{\tau}\partial_{x}(\frac{1}{\tau}\partial_{x}\tilde{u}) \\
 P^{2}_{A}\tilde{v} &:=& \partial_{t}\tilde{v}-d\nabla(A\nabla \tilde{v}) \\
 B_{A,\tau}\tilde{v} &:=& -d \frac{1}{\tau} e_{y} A \nabla \tilde{v}\vert_{\partial \Omega}.
\end{array}
\right.
\end{equation*}
The system \eqref{AsPlat} rewrites under the more compact form:

\begin{equation*}
\left\{
\begin{array}{llcl}
P^{1}_{\tau}\tilde{u} &= \nu \tilde{v}\vert_{\partial \Omega}-\mu \tilde{u} &\text{ for }& \quad t >0 \, , \, x\in \mathbb{R}\\
P^{2}_{A}\tilde{v} &= f(\tilde{v}) &\text{ for }& \quad t >0 \, , \, (x,y)\in \mathbb{R}\times\mathbb{R}^{+} \\
B_{A,\tau}\tilde{v} &= \mu \tilde{u}-\nu \tilde{v}\vert_{\partial \Omega} &\text{ for }& \quad t >0 \, , \, x\in \mathbb{R}.
\end{array}
\right.
\end{equation*}
Let $\phi$ be a non-negative function, $C^{2}$ on its support, such that:
\begin{equation*}
\left\{
\begin{array}{l}
\phi \text{ is compactly supported.} \\
\phi(0)=0 \\
1 \leq d\phi^{\prime \prime}+f^{\prime}(0)\phi \\
\phi^{\prime}(0) = \frac{2\mu+1}{d}.
\end{array}
\right.
\end{equation*}
Let us take $M>0$ so that $supp(\phi) \subset [0,M]$. Let us take $L>M$ large enough to be chosen after. Now, let us explain how to build subsolutions of \eqref{AsPlat}. Because we are dealing with a KPP nonlinearity, it is a classical observation that it is sufficient to build subsolutions of the system \eqref{AsPlat} \emph{linearized and penalized}, i.e., replacing $f(v)$ by $(f^{\prime}(0)-\delta)v$, where $\delta>0$ is a small parameter. For the sake of clarity, we will omit the $\delta$ in the following, this can be done without loss of generality.

For $\lambda>0$, for $\alpha,\beta,\gamma_{1},\gamma_{2}$ complex numbers to be be chosen after, we define $\gamma(y):=\gamma_{1}e^{-\beta y}+~\gamma_{2}e^{\beta y}$, and also:
\[
(u_{\lambda},v_{\lambda}):=\Real\Big((1,\gamma(y))e^{-\alpha(x-ct)}\Big)+(-\lambda,\lambda \phi (y)).
\]
This couple is defined on $\partial \tilde \Omega \times \tilde{\Omega}$. We denote:
\[
(U(t,x),V(t,x,y)) := \Real\Big((1,\gamma(y))e^{-\alpha(x-ct)}\Big).
\]
We will now choose the coefficients $(\alpha,\beta,\gamma_{1},\gamma_{2})$ such that $(u_{\lambda},v_{\lambda})$ are subsolutions of \eqref{AsPlat}. We impose $v_{\lambda}(t,x,L)=0$, this implies (remember that $supp(\phi) \subset [0,M]$ and $L>M$):
\[
\gamma_{1}e^{-\beta L}+\gamma_{2}e^{\beta L} =0.
\]
The idea is to use $(u_{\lambda},v_{\lambda})$ to get subsolutions of \eqref{AsPlat}. It can be seen that our function can not be subsolution of the system in all the upper half-plane, therefore, we will will work on smaller domains $E,F$. Using the oscillating nature of $(U,V)$, we will be able to take $E,F$ such that they contain at least one positivity component of $(U,V)$. The generalized comparison principle \ref{gen} will allow us to prolongate these functions by zero outside of their positivity component. This will be made clear after, from now on, we consider the system \eqref{AsPlat} restricted to subdomains $E,F$ of $\mathbb{R},\mathbb{R}\times\mathbb{R}^{+}$, to be chosen after.

\emph{Third step : $(u_{\lambda},v_{\lambda})$ is subsolution.} The couple $(u_{\lambda},v_{\lambda})$ is subsolution of the linearized system on $E,F$ if:

\begin{equation}\label{restrict}
\left\{
\begin{array}{llcl}
P^{1}_{\tau}U &\leq \nu V-\mu U+\mu \lambda &\text{ for }& \quad t >0 \, , \, x\in E\\
P^{2}_{A}V -d\nabla(A\nabla \phi) &\leq f^{\prime}(0)V+\lambda f^{\prime}(0) \phi &\text{ for }& \quad t >0 \, , \, (x,y)\in F \\
B_{A,\tau}V+\lambda B_{A,\tau}\phi &\leq \mu U-\nu V-\mu \lambda &\text{ for }& \quad t >0 \, , \, x \in E.
\end{array}
\right.
\end{equation}
Equivalently :

\begin{equation*}
\left\{
\begin{array}{llcl}
(P^{1}_{\tau}-P^{1}_{0})U +P^{1}_{0}U&\leq \nu V-\mu U+\mu \lambda &\text{ for }& \quad t >0 \, , \, x\in E\\
(P^{2}_{A}-P^{2}_{I})V+P^{2}_{I}V -d \lambda \nabla(A\nabla \phi) &\leq f^{\prime}(0)V+\lambda f^{\prime}(0) \phi &\text{ for }& \quad t >0 \, , \, (x,y)\in F \\
(B_{A,\tau}-B_{I,0})V+B_{I,0}V+\lambda B_{A,\tau}\phi &\leq \mu U-\nu V-\mu \lambda &\text{ for }& \quad t >0 \, , \, x \in E.
\end{array}
\right.
\end{equation*}
Let us call
\[
\begin{array}{ll}
z_{1} =&  \alpha c - D\alpha^{2}-\nu\gamma(0)+\mu   \\
z_{2} =& \alpha c -d(\alpha^{2}+\beta^{2})-f^{\prime}(0) \\
z_{3} =& d\beta(-\gamma_{1}+\gamma_{2})-\mu + \nu (\gamma_{1}+\gamma_{2})) \\
z_{4} = &\gamma_{1}e^{-\beta L}+\gamma_{2}e^{\beta L} .
\end{array}
\]
Then, some computations show us that $P^{1}_{0}U -\nu V(\cdot,0) +\mu U = z_{1}U$, $P^{2}_{I}V-f^{\prime}(0)V=z_{2}V $ and $B_{I,0}V(\cdot,0)-\mu U+\nu V(\cdot,0)=z_{3}U$.

At this point, we need to recall some elements from \cite{BRR1}. See the proof of their theorem 6.1 for details. There, they show that :
 $\exists L_{0}>0$ such that $\forall L \geq L_{0}$, $\exists! c_{L} \in (c_{KPP}, c_{BRR})$, such that $c_{L}\to c_{BRR}$ as $L\to +\infty$ and such that : $\forall c < c_{L}$, close enough to $c_{L}$, there are $(\alpha,\beta,\gamma_{1},\gamma_{2})$ such that $z_{1}=z_{2}=z_{3}=z_{4}=0$. Moreover, we have:
 
\begin{equation}\label{posit}
\left\{
\begin{array}{c}
\Real(\beta) > 0 \\
\Imag(\beta) \neq 0 \\
\Imag(\alpha) \neq 0.
\end{array}
\right.
\end{equation}
They also show that, taking $c$ close enough to $c_{L}$, we can select two positivity component of $U$ and $V$, name them $E$ and $F$ respectively, such that the functions $(\tilde{U},\tilde{V})$, defined to be equal to $(U,V)$ on $E$ and $F$ and zero everywhere else are subsolutions in the sense of the generalized comparison principle \ref{gen}.

Moreover, if $\alpha_{1}, \beta_{1}$ are the real parts of $\alpha,\beta$ and  $\alpha_{2}, \beta_{2}$ the imaginary parts of $\alpha,\beta$, we have:
\begin{equation*}
\left\{
\begin{array}{lll}
U(t,x)&=& e^{-\alpha_{1}(x-ct)}\cos(\alpha_{2}(x-ct)) \\
V(t,x,y)&=&\vert \gamma_{1}\vert  e^{-\alpha_{1}(x-ct)}\Big( e^{-\beta_{1}y}cos(\arg(\gamma_{1}) -\alpha_{2}x-\beta_{2}y) \\ 
&& - e^{-\beta_{1}(2L-y)}cos(\arg(\gamma_{1}) -\alpha_{2}x-\beta_{2}(2L-y) ) \Big).
\end{array}
\right.
\end{equation*}
Observe that these functions are periodic in the $x$ direction, with period $\frac{2\pi}{\Imag(\alpha)}$.

Then, taking $(\alpha,\beta,\gamma_{1},\gamma_{2})$ such that $z_{1}=z_{2}=z_{3}=z_{4}=0$, $(u_{\lambda},v_{\lambda})$ is subsolution of \eqref{restrict} if:

\begin{equation}\label{perturbation}
\left\{
\begin{array}{llcl}
(P^{1}_{\tau}-P^{1}_{0})U &\leq \mu \lambda &\text{ for }&  t >0 \, , \, x\in E\\
(P^{2}_{A}-P^{2}_{I})V  &\leq \lambda f^{\prime}(0) \phi +d \lambda \nabla(A\nabla \phi) &\text{ for }&  t >0 \, , \, (x,y)\in F \\
(B_{A,\tau}-B_{I,0})V+\lambda B_{A,\tau}\phi &\leq-\mu \lambda &\text{ for }&  t >0 \, , \, x \in E.
\end{array}
\right.
\end{equation}
We need now to define $E$ and $F$ and to explain how we will truncate $(u_{\lambda},v_{\lambda})$ to get a compactly supported subsolution of \eqref{AsPlat}.

\emph{Fourth step : localisation and truncating.} For $A>0$ to be chosen after, define:
\[
F=\Omega_{L,\Lambda,R,c}:=\Omega \cap \{ x \geq \Lambda \} \cap \{y \leq  L\}\cap\{ \vert x -ct \vert \leq R\}
\]
and 
\[
E:=\overline{\Omega_{L,\Lambda,R,c}}\cap\{y=0\}.
\]
We can take $R$ large enough, depending on $U,V$ and $L$ so that $\bar{\Omega}_{L,\Lambda,R,c}$ contains at least one positivity component of $(U,V)$. On these sets, the $W^{2,\infty}$ norms of $(U,V)$ are uniformly bounded (by quantities depending on $\alpha,\beta,\gamma_{1},\gamma_{2},c$), and the quantities $\sup_{E}(P^{1}_{\tau}-P^{1}_{0})U$, $\sup_{F}(P^{2}_{A}-P^{2}_{I})V$, $\sup_{E}(B_{A,\tau}-B_{I,0})V$ go to zero as $\Lambda$ becomes large (because of the hypotheses on the convergence of $\rho$). Observe that, thanks to the hypotheses on $\phi$, if $x\geq \Lambda$, for $\Lambda$ large enough, $  f^{\prime}(0) \phi +d  \nabla(A\nabla \phi) \geq 1$. Then, the system \eqref{perturbation} is verified for $(x,y) \in \overline{\Omega_{L,\Lambda,R,c}}$ if:

\begin{equation*}
\left\{
\begin{array}{llcl}
\epsilon_{1}(\Lambda) &\leq \mu \lambda  \\
\epsilon_{2}(\Lambda)  &\leq \lambda \\
\epsilon_{13}(\Lambda) &\leq-\mu \lambda +d \lambda \tau(x) \phi^{\prime}(0) \quad ,\quad  \text{ for } x\geq \Lambda,
\end{array}
\right.
\end{equation*}
where $\epsilon_{1}(\Lambda),\epsilon_{2}(\Lambda),\epsilon_{3}(\Lambda)$ are functions that go to zero as $\Lambda$ goes to infinity. Then, we can always take $\Lambda$ large enough, depending on $\lambda$, so that the hypotheses on $\phi$ yields that these equations are verified.

\emph{Last step : conclusion.} Let us sum up what we have done. We have proven that, for each $\lambda>0$, the functions $(u_{\lambda},v_{\lambda})=(U-\lambda,V+\lambda\phi)$ are subsolutions of the system \eqref{restrict}, for $L, \Lambda$ large enough.

We saw that we can take $R$ large enough so that $(U,V)$ have at least one positivity component in this set. This $R$ depends only on $U,V$ and $L$,and not on $\Lambda$. Then, $L$ and $R$ being chosen, we can then take $\lambda$ small enough so that $(U-\lambda,V+\lambda \phi)$ has bounded positivity components on $\Omega_{L,\Lambda,R,c}$. Indeed, remember that $(u_{\lambda},v_{\lambda})=(U,V)+(-\lambda,\lambda\phi)$. Taking $\lambda$ small enough, the positivity component of $u_{\lambda}$ and $v_{\lambda}$ will be as close as we want to the positivity component of $U$ and $V$, respectively, in $(E,F)$. This is obvious for $u_{\lambda}(t,x)=e^{-\alpha_{1}(x-ct)}\cos(\alpha_{2}(x-ct))-\lambda$, for $x\in E$. For $v_{\lambda}$, observe first that $L$ was chosen large enough so that $L>M$, where $M$ is such that $supp(\phi)\subset [0,M]$. If $(x,y)\in \Omega_{L,\Lambda,R,c}$ is such that $\cos(\arg(\gamma_{1}) -\alpha_{2}x-\beta_{2}y)=-1$ and $y\leq M$, we have 
\[
V(t,x,y)\leq \vert \gamma_{1}\vert  e^{-\alpha_{1}(x-ct)}( -e^{-\beta_{1}y} + e^{-\beta_{1}(2L-y)}).
\]
Because $(x,y)\in \Omega_{L,\Lambda,R,c}$ and $y\leq M$, we have :
\[
V(t,x,y)\leq \vert \gamma_{1}\vert  e^{-\alpha_{1}R}e^{-\beta_{1}M}( e^{-2\beta_{1}(L-M)}- 1).
\]
Because $M<L$, this quantity is strictly negative ($\beta_{1}$ is positive, from \eqref{posit} ). Therefore, if $\lambda>0$ is small enough, $V-\lambda\phi$ has bounded positivity component on $\Omega_{L,\Lambda,R,c}$ as close as we want to those of $V$.

Now, if we define $(u,v)$ to be equal to $(u_{\lambda},v_{\lambda})$ on the positivity components we chose before, and prolongate these by zero in the rest of the upper half-plane, we get functions that are generalized subsolutions in the sense of the generalized comparison principle \ref{gen} of the system \eqref{restrict}. These functions are non-negative, compactly supported and move at speed $c$ in the direction $e$. Now, doing the inverse change of variable we did in \eqref{changvariables}, we get a couple of generalized subsolutions in the sense of the generalized comparison principle, Proposition \ref{gen}, of the system \eqref{syst}, hence the result.

\end{proof}

At this point, we have built supersolutions and subsolutions. Using the same strategy as in subsection \ref{strong spreading}, we can prove our main result, Theorem \ref{mainth}. The main difference is that the subsolutions we have are of the form $(u(x-cte_{1},y+\varepsilon(x)-cte_{2}),v(x-cte_{1},y+\varepsilon(x)-cte_{2}))$, where $\varepsilon(x)$ goes to zero as $x$ goes to infinity. This difference rises no difficulties, the proof of Theorem \ref{thfinal} adapts directly.

\noindent {\textbf{Acknoledgements:} The research leading to these results has received funding from the European Research Council under the European Union?s Seventh Framework Program (FP/2007-2013) / ERC Grant Agreement n.321186 - ReaDi - Reaction-Diffusion Equations, Propagation and Modeling. The author wants to thank Henri Berestycki and Luca Rossi for suggesting this problem and for interesting discussions.}

\end{document}